\documentclass[english]{amsart}
\usepackage{amsmath,setspace}
\usepackage{amssymb}
\usepackage[all]{xy}
\usepackage{xypic,amsthm,hyperref,graphicx,stmaryrd,boxedminipage,mathrsfs,manfnt,comment}
\usepackage[all]{xy}
\usepackage[mathcal]{euscript}
\usepackage{calligra}
\usepackage{sseq}

\newcommand{\sma}{\wedge}

\newcommand{\Sp}{\operatorname{Sp}}

\newcommand{\ext}{\operatorname{Ext}}

\renewcommand{\hom}{\operatorname{Hom}}

\newcommand{\id}{\operatorname{Id}}

\newcommand{\map}{\operatorname{Map}}

\newcommand{\mf}{\mathfrak}

\newcommand{\fun}{\operatorname{Fun}}
\newcommand{\mc}{\mathcal}
\newcommand{\hh}{\operatorname{HH}}
\newcommand{\mbf}{\mathbf}
\newcommand{\thh}{\operatorname{THH}}
\newcommand{\tc}{\operatorname{TC}}
\newcommand{\diag}{\operatorname{diag}}

\newcommand{\cat}{\mc{C}\text{at}}

\DeclareMathOperator*{\hocolim}{hocolim} 

\DeclareMathOperator*{\colim}{colim}
\newtheorem{thm}{Theorem}[section]
\newtheorem{lem}[thm]{Lemma}
\newtheorem{cor}[thm]{Corollary}
\newtheorem{prop}[thm]{Proposition}

\theoremstyle{definition}
\newtheorem{defn}[thm]{Definition}
\newtheorem{example}[thm]{Example}
\newtheorem{rmk}[thm]{Remark}
\newtheorem{nota}[thm]{Notation}
\newtheorem{conv}[thm]{Convention}
\newtheorem{conj}[thm]{Conjecture}
\setcounter{tocdepth}{2}

\title{Derived Koszul Duality and Topological Hochschild Homology}
\author{Jonathan A. Campbell}
\date{}
%\doublespacing
\begin{document}
\maketitle
\begin{abstract}
 Motivated by a result from string topology, we prove a duality in topological Hochschild homology ($\thh)$. The duality relates the $\thh$ of an $\mbf{E}_1$-algebra spectrum and the $\thh$ of its derived Koszul dual algebra under certain compactness conditions. The result relies on results about module categories which may be of interest on their own. Finally, we relate this result to topological field theories and outline some future work. 
\end{abstract}
\tableofcontents

\section{Introduction}

\subsection{Motivation}
In recent years, both topological field theories and algebraic $K$-theory have received much attention, and rightly so. Algebraic $K$-theory is a universal invariant of categories that contains a wealth of information. When applied to exact categories of modules it contains arithmetic information \cite{quillen, mitchell}. When applied to categories of retractive spaces \cite{waldhausenI} (this is sometimes called $A(X)$ for a space $X$) it contains  deep geometric information, for example about automorphisms of manifolds \cite{waldhausenI,waldhausenII,waldhausenIII}. Topological field theories are similarly deep invariants of manifolds that capture not only homotopical information, but geometric structure as well. 

However, these invariants can be very difficult to compute, and only the simplest cases are directly accessible. In what is almost certainly not a coincidence, a common invariant appears in both $K$-theory computations and low dimensional field theories: topological Hochschild homology. As its name suggests, topological Hochschild homology (THH) is a generalization of the Hochschild homology of associative algebras to topological rings (aka ring spectra). In fact, some of the impetus for the development of categories of ring spectra come from the necessity of such a generalization. 

Let us begin to relate $\thh$ to both $K$-theory and topological field theories. 

It has been known for some time that the $K$-theory of a ring admits a map to the Hochschild homology of the same ring, called the Dennis trace. Work of Goodwillie \cite{goodwillie} shows a close relationship between relative $K$-theory and Connes' cyclic homology \cite{goodwillie, loday}, which is a refinement of Hochschild homology. For a ring $A$, the map $K_\ast (A) \to \hh_\ast(A)$ factors through cyclic homology and the resulting map to cyclic homology is close to an equivalence.  In particular, for simplicial rings, relative $K$-theory and relative cyclic homology are rationally equivalent. Thus, Hochschild homology calculations can be exploited to gain computational information about the algebraic $K$-theory of rings. 

There is a generalization of algebraic $K$-theory to ring spectra, not just rings, giving arithmetic invariants of ring spectra. This is not just frivolous generalization --- Waldhausen's $A$-theory fits into this framework via the identity $A(X) \simeq K(\Sigma^\infty_+ \Omega X)$ \cite{EKMM} (here $\Omega X$ denotes the based loop space). Thus, computations of $K$-theory for ring spectra are crucial to understanding diffeomorphsims of manifolds. It could thus be hoped that for ring spectra there are suitable generalizations of the above results, whereby $K$-theory admits a trace to a ``topological'' Hochschild homology which can be refined to a map to a ``topological'' cyclic homology, and where the latter map is very nearly an equivalence. Goodwillie conjectured the existence of such a generalization and B\"{o}kstedt provided the necessary coherence machinery and definition \cite{bokstedt}. Shortly thereafter, B\"{o}kstedt, Madsen and Hsiang \cite{BHM} defined topological cyclic homology ($\tc$) and showed that K-theory admits a trace to $\thh$ which factors through $\tc$. Further work by Dundas and McCarthy show that relative $K$-theory and relative topological cyclic homology agree $p$-adically  \cite{ dundas, mccarthy}. This method of computing $K$-theory has been enormously successful, and at the moment is essentially the only way to compute $K$-theory. 

Topological Hochschild homology is also closely related to field theories. One way to define a field theory is contained in Atiyah's seminal paper \cite{atiyah}. A field theory is roughly a functor from a cobordism category (the objects are manifolds, and the morphisms cobordisms) to some target category where invariants live. Baez and Dolan \cite{baez_dolan} realized that this could be extended to ``higher categories'', that is, instead of just having objects and morphisms, we have morphisms between morphisms, and morphisms between morphisms between morphisms, and so on.  Motivated by this, Hopkins and Lurie \cite{LurieFT} provide a sketch of a classification of field theories for $(\infty,n)$-categories. We will not go into detail here, but given a choice of certain target category and given a field theory $F$, if the functor $F$ assigns a point to a ring $A$, then $F(S^1)$ will be $\thh(A)$ \cite{LurieFT}. So, $\thh$ is in some sense the simplest manifold invariant obtained from field theories. 

Though the invariants obtained by many topological field theories are difficult to access, there are a few long-studied low-dimensional cases, in particular Chas and Sullivan's \cite{chas_sullivan} string topology. This is a particularly illuminating theory from the perspective above, for the following reasons. Conjecturally, string topology is the field theory classified at a point by $\Sigma^\infty_+ \Omega X$, where $X$ is some simply connected space, and $\Omega$ denotes the based loop space. This makes contact with the algebraic $K$-theory of spaces, since Waldhausen's $A(X)$ is in fact $K(\Sigma^\infty_+ \Omega X)$. As mentioned above, there is a trace $A(X) = K(\Sigma^\infty_+ \Omega X) \to \thh(\Sigma^\infty_+ \Omega X)$. The latter is equivalent, by a computation of B\"{o}kstedt and Waldhausen, to $\Sigma^\infty_+ \mc{L} X$ where $\mc{L}X$ is the free loop space, $\mc{L}X = \map(S^1, X)$. This is another hint at a relationship to string topology. 

We have one more object to introduce, and that is Koszul duality. Let $A$ be an augmented algebra and $k$ a ground field. We may form the derived tensor product $k \otimes^{\mbf{L}}_A k$ which is a coalgebra with coproduct given by the augmentation. Taking the linear dual of $k \otimes^{\mbf{L}}_A k$ we obtain the Koszul dual.  In other incarnations, the bar construction is used in place of derived tensor product, but they amount to the same thing. For this reason derived Koszul duality is often called bar-cobar duality. Roughly, we'll say that augmented algebra $A, B$ are derived Koszul dual if $\operatorname{End}_A (k, k) \simeq B$ and $\operatorname{End}_B (k, k) \simeq A$. In what follows we will often used ``Koszul duality'' in place of the more cumbersome ``derived Koszul duality.'' The idea of Koszul duality first appeared in \cite{priddy} and has occurred in many, many incarnations since then. We will be using a version of Koszul duality which, to the author's knowledge, first appear in \cite{DGI}. We note that in the explanation above, we were somewhat agnostic about the category --- it could have been any symmetric monoidal category. However, in the sequel, we will work primarly in the category of spectra. In this instance, a good example of Koszul dual algebras are $\Sigma^\infty_+ \Omega X$ and $DX$, the Spanier-Whitehead dual of $X$, where $X$ is a compact, simply-connected topological space. 

Having introduced the major players, $\thh$, field theories and Koszul duality, we can proceed to tie them together: appropriately enough, we will do this with string topology. String topology, the based loop space $\Omega X$ and Hochschild homology have long been known to have a close relationship. Even before the discovery of string topology, chain-level computational results existed about $\hh_\ast (C_\ast (\Omega X))$ \cite{goodwillie}.  One of these results \cite{jones_mccleary} relates $\hh_\ast (C_\ast (\Omega X))$ and Hochschild homology of the derived Koszul dual of $C_\ast (\Omega X)$, which is the cochains $C^\ast (X)$. This is where derived Koszul duality enters the picture.

Motivated by the above considerations in string topology, Koszul duality and the algebraic $K$-theory of spaces, Ralph Cohen has asked some questions about the relationship between $\Sigma^\infty_+ \Omega X$ and $DX$,  when viewed through the lens of $K$-theory and $\thh$. The first is: What is the relationship between $A(X)$ and $K(DX)$? This question was beautifully answered in \cite{BlumbergMandellKoszul}. The second is as follows:  given the observation (noted above) that $\thh(\Sigma^\infty_+ \Omega X) \simeq \Sigma^\infty_+ \mc{L} X$ and $D(\thh(DX)) \simeq \Sigma^\infty_+ \mc{L} X$ (observed by Cohen and Jones), can one explain the equivalence of these two as an actual (rather than abstract) equivalence, and does this same relation hold for other Koszul dual algebras? This paper is an affirmative answer to the second question:  We provide an equivalence and show that a similar relationship holds for the $\thh$ of any Koszul dual $\mbf{E}_1$-algebras. Our hope is that this will provide some computational traction, as well as hint at deeper phenomona in field theories. 

For the proof, we cannot approach the problem in the same way that \cite{jones_mccleary} does in the case of chains, for we run into severe technical issues. The most vexing is that we need both bar and cobar constructions to properly deal with (derived) Koszul duality, but in the category of spectra, there appears not to be a way to make both multiplication and comultiplication into associative, resp. coassociative, operations and retain good homotopy theoretic properties. The introduction of necessary $A_\infty$-bar constructions then renders the computations much more difficult. In short, we run into coherence issues. 

Instead we take a different, more conceptual, approach. Topological Hochschild homology is Morita invariant, that is, we can define $\thh$ on the module category of an algebra $A$, and this coincides with $\thh$ of $A$. Thus, to unearth relationships about the $\thh$ of two algebras, it helps to understand the relationship between their module categories. But what category do these module categories live in? It turns out that the most convenient category to use is the category of \textit{small, stable, idempotent complete} quasicategories, denoted $\mc{C}\text{at}^{\text{perf}}_\infty$, introduced in \cite{BGT}. Very roughly, this should be thought of as some elaboration of spectral categories --- more details will be forthcoming later. The salient property of this category is that it is a natural source for both $K$-theory and $\thh$ and it is symmetric monoidal. 

With these concepts in place, we prove the following for Koszul dual $\mbf{E}_1$-algebras:

\begin{thm}
Let $A$ and $B$ be Koszul dual $\mbf{E}_1$-algebras, with $A$ perfect (i.e. in some sense finitely generated) as an $S$-module, then 
\[
D(\operatorname{Mod}^{\operatorname{perf}}_A) \simeq \operatorname{Mod}^{\operatorname{perf}}_{B^{\text{op}}}
\]
where $D$ denotes the dual in the category of (small, stable, idempotent-complete) $\infty$-categories. That is, the $(\infty,1)$-category of perfect $A$-modules is dual to the  $(\infty,1)$-category of perfect $B^{\text{op}}$-modules. 
\end{thm}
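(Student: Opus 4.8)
We sketch the approach we would take.

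The plan is to first identify $D(\operatorname{Mod}^{\operatorname{perf}}_A)$ concretely as a category of one-sided $A$-modules, and then to recognize that category as $\operatorname{Mod}^{\operatorname{perf}}_{B^{\operatorname{op}}}$ by means of the Koszul duality functor. Since $R\mapsto\operatorname{Mod}^{\operatorname{perf}}_R$ is symmetric monoidal into $\mc{C}\text{at}^{\text{perf}}_\infty$ with unit $\operatorname{Mod}^{\operatorname{perf}}_S$, the dual $D(\mathcal{C})$ is the internal mapping object $\operatorname{Fun}^{\operatorname{ex}}(\mathcal{C},\operatorname{Mod}^{\operatorname{perf}}_S)$; this coincides with the honest monoidal dual precisely when the latter exists (that is, when $A$ is moreover smooth), so in general we are computing an internal hom rather than asserting dualizability. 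Using the universal property of $\operatorname{Ind}$, an exact functor $\operatorname{Mod}^{\operatorname{perf}}_A\to\operatorname{Mod}^{\operatorname{perf}}_S$ prolongs uniquely to a colimit- and compact-object-preserving functor $\operatorname{Mod}_A\to\operatorname{Sp}$, and such functors are exactly $(-)\otimes_A N$ for a left $A$-module $N$ with perfect underlying spectrum. (This Eilenberg--Watts-type statement is of the kind the module-category results are meant to supply.) Hence
\[
D(\operatorname{Mod}^{\operatorname{perf}}_A)\ \simeq\ \operatorname{Mod}^{\operatorname{prop}}_{A^{\operatorname{op}}},
\]
the $\infty$-category of left $A$-modules that are perfect as $S$-modules; note that this step does not yet use that $A$ is perfect over $S$.

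Next I would bring in Koszul duality. Let $S$ denote the augmentation module, so that by hypothesis $B\simeq\operatorname{End}_A(S)$ and $A\simeq\operatorname{End}_B(S)$, and $S$ is a bimodule linking $A$ and $B$. Consider the Koszul duality adjunction relating $\operatorname{Mod}_{A^{\operatorname{op}}}$ and $\operatorname{Mod}_{B^{\operatorname{op}}}$, with right adjoint the appropriate $\Phi=\operatorname{Hom}_A(S,-)$ and left adjoint $\Psi=(-)\otimes S$. First I would check that $\Psi$ is fully faithful on perfect modules: the unit $N\to\Phi\Psi N$ is an equivalence when $N=B^{\operatorname{op}}$, because $\Psi(B^{\operatorname{op}})\simeq S$ and $\Phi(S)=\operatorname{Hom}_A(S,S)\simeq B$ by the definition of the Koszul dual; since $\Phi\Psi$ and $\operatorname{id}$ are both exact, the full subcategory where the unit is an equivalence is thick, hence is all of $\operatorname{Mod}^{\operatorname{perf}}_{B^{\operatorname{op}}}=\langle B^{\operatorname{op}}\rangle$. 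Consequently $\Psi$ embeds $\operatorname{Mod}^{\operatorname{perf}}_{B^{\operatorname{op}}}$ onto the thick subcategory $\langle S\rangle\subseteq\operatorname{Mod}_{A^{\operatorname{op}}}$ generated by the augmentation module.

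The remaining, and in my view principal, task is to identify $\langle S\rangle$ with $\operatorname{Mod}^{\operatorname{prop}}_{A^{\operatorname{op}}}$. One containment is immediate: $S$ is trivially perfect over $S$ and the modules with perfect underlying spectrum form a thick subcategory, so $\langle S\rangle\subseteq\operatorname{Mod}^{\operatorname{prop}}_{A^{\operatorname{op}}}$. The reverse containment --- every $A$-module with perfect underlying spectrum is built from finitely many suspensions of $S$ by cofiber sequences and retracts --- is exactly where the hypothesis that $A$ is perfect over $S$ enters, together with the connectivity and augmentation data implicit in forming a Koszul-dual pair. The plan is a d\'{e}vissage: filter an $S$-perfect $A$-module $M$ along its connectivity tower; since $A$ is connective with $\pi_0$ determined by the augmentation and $M$ is a finite spectrum, this tower is finite, with associated graded pieces finite modules over $\pi_0 A$, each of which lies in $\langle S\rangle$; hence $M\in\langle S\rangle$. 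Equivalently, one is asserting that for a proper augmented algebra the residue-field module $S$ generates $\operatorname{Mod}^{\operatorname{prop}}_A$ as a thick subcategory. Making the filtration genuinely finite, and controlling cells over the sphere rather than over a field, is the delicate point and I expect it to be the main obstacle. Granting it, $\Psi$ restricts to an equivalence $\operatorname{Mod}^{\operatorname{perf}}_{B^{\operatorname{op}}}\weakequiv\operatorname{Mod}^{\operatorname{prop}}_{A^{\operatorname{op}}}$, which together with the first step yields $D(\operatorname{Mod}^{\operatorname{perf}}_A)\simeq\operatorname{Mod}^{\operatorname{perf}}_{B^{\operatorname{op}}}$.

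As a more hands-on alternative, one could try to produce the duality pairing directly from the bimodule $S$: an evaluation $\operatorname{Mod}^{\operatorname{perf}}_{B^{\operatorname{op}}}\otimes\operatorname{Mod}^{\operatorname{perf}}_A\simeq\operatorname{Mod}^{\operatorname{perf}}_{B^{\operatorname{op}}\otimes A}\to\operatorname{Mod}^{\operatorname{perf}}_S$ given by contraction against $S$ (which does land in compact spectra because $S$ is perfect over $S$), and a matching coevaluation, then verify the triangle identities. I would keep this in reserve, since checking the coherence of these pairings appears more laborious than the d\'{e}vissage route above.
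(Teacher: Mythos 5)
Your skeleton is the same as the paper's: (i) identify $D(\operatorname{LMod}^{\operatorname{perf}}_A)$, computed as $\fun^{\operatorname{Ex}}(\operatorname{LMod}^{\operatorname{perf}}_A,\operatorname{Sp}^\omega)$, with the $A$-modules whose underlying spectrum is perfect (the paper gets this from its right-compactness description of functors into $\operatorname{Sp}^\omega$ inside $\operatorname{Ind}(\mc{C})^{\text{op}}$, which is the same Eilenberg--Watts-type identification you invoke); (ii) identify $\operatorname{LMod}^{\operatorname{perf}}_{B^{\text{op}}}$ with the thick subcategory $\mc{T}_A(S)$ generated by the augmentation module (the paper does this via Schwede--Shipley applied to the localizing subcategory $\mc{L}_A(S)$, equivalently the $\operatorname{Ext}_A(-,S)$ adjunction of Blumberg--Mandell, which is exactly your $\Phi,\Psi$ argument); (iii) compare $\mc{T}_A(S)$ with the $S$-perfect $A$-modules. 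Steps (i) and (ii) are fine and match the paper.

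The genuine gap is in step (iii), which you rightly call the crux. The d\'{e}vissage you sketch would fail as stated: $A$ is not assumed connective; an $S$-perfect module is bounded below but generically has homotopy in infinitely many degrees, so a Postnikov/connectivity tower is not finite; even granting boundedness, over the sphere there is no reason a finite $\pi_0A$-module, viewed as an $A$-module, lies in $\langle S\rangle$ (this is precisely the ``field versus sphere'' issue you flag); and, tellingly, the hypothesis that $A$ is perfect over $S$ never actually enters your filtration. The paper avoids all hands-on cell-counting. The compactness hypothesis, together with dc-completeness (cf.\ the double-centralizer condition and \cite[Pr.~4.17]{DGI}), guarantees $A\in\mc{T}_A(S)$, so one has fully faithful inclusions $\operatorname{LMod}^{\operatorname{perf}}_A\subset\mc{T}_A(S)\subset\operatorname{LMod}^{S\text{-comp}}_A\subset\operatorname{LMod}_A\simeq\operatorname{Ind}(\operatorname{LMod}^{\operatorname{perf}}_A)$; then a general sandwich argument (Prop.~\ref{thick_equivalence}: apply $\operatorname{Ind}$, observe the induced functors are fully faithful and essentially surjective, pass to compact objects and use idempotent completeness) shows that any small stable idempotent-complete category squeezed between $\operatorname{LMod}^{\operatorname{perf}}_A$ and its $\operatorname{Ind}$-completion is equivalent to $\operatorname{LMod}^{\operatorname{perf}}_A$, whence $\mc{T}_A(S)\simeq\operatorname{LMod}^{S\text{-comp}}_A$ (Lem.~\ref{thick_and_S_comp}). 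That abstract identification is the missing idea; without it, or a working substitute for your d\'{e}vissage, your proof of the containment $\operatorname{LMod}^{S\text{-comp}}_A\subset\langle S\rangle$ is incomplete, and the theorem does not follow.
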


\begin{rmk}
Below, we will be more precise about what symmetric monoidal structure we are using for duality, and what duality means. Roughly, there is a monoidal structure on the $\infty$-category of $\infty$-categories, the the monoidal structure we are using is a special case of that. We will also be more precise about the compactness condition, but it approximately says that $A$ can be built out of $S$ in a finite way. 
\end{rmk}

We now have a duality between module categories, but we need to know how $\thh$ behaves with respect to such dualities. That is, in order to apply $\thh$ to our current problem, we need to know that as a functor from categories to spectra it is symmetric monoidal. This is in fact taken care of in \cite{BGT_TC}. 

\begin{prop}[\cite{BGT_TC}]
$\thh: \mc{C}\text{at}^{\operatorname{perf}}_\infty \to \Sp$ is symmetric monoidal. 
\end{prop}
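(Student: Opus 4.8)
We sketch the argument of \cite{BGT_TC}. The plan is to exhibit $\thh$ as a composite of two manifestly multiplicative constructions and to isolate the one nonformal input as the Morita invariance of $\thh$ (the module-category-level invariance recalled in the introduction). Recall that for $\mc{C}\in\cat^{\operatorname{perf}}_\infty$ one has, writing $\Map_{\mc{C}}(x,y)$ for the mapping spectra of the functorial spectral enrichment of $\mc{C}$,
\[
\thh(\mc{C})\;\simeq\;\colim_{[n]\in\Delta^{\operatorname{op}}}\;\bigvee_{x_0,\dots,x_n\in\mc{C}}\Map_{\mc{C}}(x_0,x_1)\wedge\Map_{\mc{C}}(x_1,x_2)\wedge\cdots\wedge\Map_{\mc{C}}(x_n,x_0),
\]
the coproduct ranging over tuples of objects and the simplicial faces and degeneracies being composition and insertion of identities. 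Call this cyclic spectrum $N^{\mathrm{cyc}}_\bullet(\mc{C})$, so that $\thh=\colim_{\Delta^{\operatorname{op}}}\circ\,N^{\mathrm{cyc}}_\bullet$. For the monoidal unit $\operatorname{Sp}^{\omega}\simeq\operatorname{Mod}^{\operatorname{perf}}_S$ of $\cat^{\operatorname{perf}}_\infty$ this computes $\thh(S)\simeq S$, so the unit is carried to the unit.

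Next I would produce the multiplicativity constraint. For $\mc{C},\mc{D}\in\cat^{\operatorname{perf}}_\infty$ the tensor product $\mc{C}\otimes\mc{D}$ is generated, under finite colimits and retracts, by the pure tensors $x\otimes y$, and its defining universal property gives $\Map_{\mc{C}\otimes\mc{D}}(x\otimes y,\,x'\otimes y')\simeq\Map_{\mc{C}}(x,x')\wedge\Map_{\mc{D}}(y,y')$. Since $\wedge$ distributes over $\bigvee$, restricting $N^{\mathrm{cyc}}_\bullet(\mc{C}\otimes\mc{D})$ to the pure tensors recovers exactly the levelwise smash $N^{\mathrm{cyc}}_\bullet(\mc{C})\wedge N^{\mathrm{cyc}}_\bullet(\mc{D})$, which yields a natural map of cyclic spectra $N^{\mathrm{cyc}}_\bullet(\mc{C})\wedge N^{\mathrm{cyc}}_\bullet(\mc{D})\to N^{\mathrm{cyc}}_\bullet(\mc{C}\otimes\mc{D})$. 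Because $\Delta^{\operatorname{op}}$ is sifted, geometric realization $\colim_{\Delta^{\operatorname{op}}}\colon\fun(\Delta^{\operatorname{op}},\Sp)\to\Sp$ is symmetric monoidal for the pointwise smash on the source; applying it produces a natural transformation
\[
\thh(\mc{C})\wedge\thh(\mc{D})\;\longrightarrow\;\thh(\mc{C}\otimes\mc{D}).
\]
To see this is an equivalence, observe that by the identification above its source is the $\thh$ of the full spectral subcategory of $\mc{C}\otimes\mc{D}$ spanned by the pure tensors and its target is the $\thh$ of $\mc{C}\otimes\mc{D}$, the map being induced by the inclusion. That inclusion is a Morita equivalence (the pure tensors generate $\mc{C}\otimes\mc{D}$ under finite colimits and retracts, equivalently $\operatorname{Ind}(\mc{C})\otimes\operatorname{Ind}(\mc{D})\simeq\operatorname{Ind}(\mc{C}\otimes\mc{D})$), and $\thh$ is Morita invariant — it inverts such inclusions, cf.\ \cite{BGT,BGT_TC}. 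Compatibility of these equivalences with the associativity, symmetry and unit constraints is inherited from $\cat^{\operatorname{perf}}_\infty$ together with the functoriality of $N^{\mathrm{cyc}}_\bullet$ and of realization.

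The real work, and the main obstacle, is to carry all of this out coherently rather than object-by-object: one must construct $N^{\mathrm{cyc}}_\bullet$ and $\thh$ as maps of symmetric monoidal $\infty$-categories (i.e.\ over $\operatorname{Fin}_\ast$), which in turn requires a functorial model of the enrichment $\mc{C}\mapsto\Map_{\mc{C}}(-,-)$ and of the monoidal structure on $\cat^{\operatorname{perf}}_\infty$, and requires the Morita-invariance comparison above to be natural in $\mc{C}$ and $\mc{D}$. Here the substantive point is exactly that $\thh$ sends a fully faithful exact functor whose target is the thick closure of its image to an equivalence, which one extracts from the compatibility of $\thh$ with sifted and filtered colimits and with split cofiber sequences. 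An alternative, more conceptual route avoids the cyclic bar construction altogether: under the symmetric monoidal functor $\operatorname{Ind}$ into $\operatorname{Pr}^{\mathrm{L}}_{\mathrm{st}}$ each $\operatorname{Ind}(\mc{C})$ is dualizable and $\thh(\mc{C})$ is its categorical dimension (the trace of the identity), so symmetric monoidality becomes the multiplicativity of dimension; but promoting $\dim$ to an honest symmetric monoidal functor requires the $(\infty,2)$-categorical theory of traces, which is itself a substantial undertaking.
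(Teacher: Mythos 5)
Your sketch is correct in outline and shares the key computational ingredient with the paper (a levelwise identification of mapping spectra in a tensor product of categories, followed by the shuffle/Eilenberg--Zilber comparison, which the paper justifies either by \cite[X.1.3.(iv)]{EKMM} or by siftedness of $\mbf{N}(\Delta^{\text{op}})$), but the route is genuinely different, precisely at the point you yourself flag as "the real work." You work directly in $\cat^{\operatorname{perf}}_\infty$, where $\mc{C}\otimes\mc{D}$ is only the thick/idempotent closure of the pure tensors, so you must invoke Morita invariance of $\thh$ to promote the map $\thh(\mc{C})\sma\thh(\mc{D})\to\thh(\mc{C}\otimes\mc{D})$ to an equivalence, and you leave open how to build everything coherently over $\operatorname{Fin}_\ast$. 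The paper instead does all the point-set work in the rigid model: it defines $\thh$ by the cyclic bar construction on \emph{flat} (pointwise cofibrant) spectral categories, where the smash product $\mc{C}\sma\mc{D}$ has mapping spectra $\mc{C}(c_0,c_1)\sma\mc{D}(d_0,d_1)$ on the nose, so the shuffle map is already an equivalence at that level with no Morita comparison needed (Thm.~\ref{thh_sym_mon_spectral}); coherence is then obtained for free by the general lifting result for symmetric monoidal special relative categories (Thm.~\ref{sym_mon}, via \cite[4.1.3.4]{LurieHA}) together with the identification of the Morita localization $\mbf{N}(\mc{C}\text{at}^{\operatorname{flat}}_{\operatorname{Sp}})[\mc{W}^{-1}]^{\otimes}\simeq(\cat^{\operatorname{perf}}_\infty)^{\otimes}$ (Lem.~\ref{flat}, Lem.~\ref{flat_sym_mon}). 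Morita invariance still enters, but only to know $\thh$ inverts the weak equivalences $\mc{W}$ so that it descends along the localization, not to verify multiplicativity. In short: your approach is more intrinsically $\infty$-categorical but leaves the coherence problem (and the functorial enrichment) as an unresolved black box, while the paper's rigid-model approach buys exactly that coherence at the cost of working with spectral categories and their flatness/cofibrancy conditions; your alternative via dualizability and traces in $\operatorname{Pr}^L_{\text{st}}$ is a legitimate third route but, as you note, requires $(\infty,2)$-categorical trace machinery the paper does not develop.
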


Finally, combining this results we get the precise statement of the theorem. 

\begin{thm}
Suppose $A$ and $B$ are Koszul dual and that $A$ is small as an $S$-module. Then
\[
D(\thh(A)) \simeq \thh(B^{\text{op}})
\]
where $D$ denotes Spanier-Whitehead dual in the category of spectra. 
\end{thm}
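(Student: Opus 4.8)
The plan is to deduce the final theorem from the two stated inputs — the duality of module categories (first Theorem) and the symmetric monoidality of $\thh$ (Proposition of \cite{BGT_TC}) — together with Morita invariance of $\thh$. First I would recall that for an $\mbf{E}_1$-algebra $A$ that is small (perfect) as an $S$-module, $\thh(A) \simeq \thh(\operatorname{Mod}^{\operatorname{perf}}_A)$, where on the right $\thh$ is the invariant of $\mc{C}\text{at}^{\operatorname{perf}}_\infty$; this is the Morita invariance of $\thh$ and it is the reason we pass through module categories at all. The same identification of course applies to $B^{\text{op}}$, giving $\thh(B^{\text{op}}) \simeq \thh(\operatorname{Mod}^{\operatorname{perf}}_{B^{\text{op}}})$ — here one must check that the Koszul duality hypothesis, together with $A$ being small, is enough to know that $B$ (equivalently $B^{\text{op}}$) is the kind of algebra to which Morita invariance applies, i.e. that its perfect module category is a well-behaved object of $\mc{C}\text{at}^{\operatorname{perf}}_\infty$; this should follow from the first Theorem, since $\operatorname{Mod}^{\operatorname{perf}}_{B^{\text{op}}}$ is exhibited there as a dual of $\operatorname{Mod}^{\operatorname{perf}}_A$ and duals of objects of $\mc{C}\text{at}^{\operatorname{perf}}_\infty$ are again objects of $\mc{C}\text{at}^{\operatorname{perf}}_\infty$.

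Next I would invoke the first Theorem to get $D(\operatorname{Mod}^{\operatorname{perf}}_A) \simeq \operatorname{Mod}^{\operatorname{perf}}_{B^{\text{op}}}$ in $\mc{C}\text{at}^{\operatorname{perf}}_\infty$, and then apply $\thh$ to both sides:
\[
\thh\big(D(\operatorname{Mod}^{\operatorname{perf}}_A)\big) \simeq \thh\big(\operatorname{Mod}^{\operatorname{perf}}_{B^{\text{op}}}\big).
\]
Since $\thh$ is symmetric monoidal and $\operatorname{Mod}^{\operatorname{perf}}_A$ is a dualizable object (it is small as a module category, being generated by the perfect $A$-module $A$, which is itself small over $S$ by hypothesis), $\thh$ carries the dual to the dual: $\thh(D(\operatorname{Mod}^{\operatorname{perf}}_A)) \simeq D(\thh(\operatorname{Mod}^{\operatorname{perf}}_A))$, where the right-hand $D$ is now Spanier–Whitehead duality in $\Sp$, because a symmetric monoidal functor preserves dualizable objects and their duals, and the dual in $\Sp$ of a (dualizable) spectrum is precisely its Spanier–Whitehead dual. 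Combining, $D(\thh(\operatorname{Mod}^{\operatorname{perf}}_A)) \simeq \thh(\operatorname{Mod}^{\operatorname{perf}}_{B^{\text{op}}})$, and then Morita invariance on both sides rewrites this as $D(\thh(A)) \simeq \thh(B^{\text{op}})$, which is the claim.

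The main obstacle, I expect, is not the formal chain above but the verification of dualizability and of the hypotheses under which the formal machinery applies: one must confirm that $\operatorname{Mod}^{\operatorname{perf}}_A$ is genuinely a dualizable object of the symmetric monoidal $\infty$-category $\mc{C}\text{at}^{\operatorname{perf}}_\infty$, and this is exactly where the smallness (perfectness) of $A$ as an $S$-module is used — without it $\operatorname{Mod}^{\operatorname{perf}}_A$ need not be dualizable and the argument collapses. One should also be careful that the ``$D$'' appearing in the first Theorem (the categorical dual) and the ``$D$'' in the conclusion (Spanier–Whitehead dual of spectra) are matched up correctly by $\thh$, which again is precisely the content of $\thh$ being a \emph{symmetric monoidal} functor: symmetric monoidal functors take dualizable objects to dualizable objects and preserve coevaluation/evaluation maps, hence duals. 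Finally, one should track the opposite-algebra bookkeeping carefully: $\thh(A) \simeq \thh(A^{\text{op}})$ (via the cyclic/reflection symmetry of the cyclic bar construction), so there is no discrepancy between working with $B$ and $B^{\text{op}}$ at the level of $\thh$, but it is worth remarking on this so the statement matches the module-category statement cleanly.
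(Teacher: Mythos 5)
Your proposal is correct and follows essentially the same route as the paper: Morita invariance to replace $\thh(A)$ by $\thh(\operatorname{LMod}^{\operatorname{perf}}_A)$, the module-category duality $D(\operatorname{LMod}^{\operatorname{perf}}_A)\simeq \operatorname{LMod}^{\operatorname{perf}}_{B^{\text{op}}}$, and the fact that the symmetric monoidal functor $\thh$ preserves the (co)evaluation data and hence carries the categorical dual to the Spanier--Whitehead dual. The only cosmetic difference is that the paper is careful to phrase everything in terms of one-sided (left) duality rather than full dualizability, but since your argument only uses preservation of evaluation/coevaluation maps, it goes through unchanged.
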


The theorem and the technique of proof immediately suggest further questions. One of the most tantalizing conjectures is that there is a deeper relationship between Koszul duality and field theories (see 3 below and section 5). 

\begin{enumerate}
\item Does a similar statement hold for $\thh_R$?  Does the same relationship on module categories hold? The answer is ``yes'' and will appear in forthcoming work. 

\item What is the cyclic structure? Does this duality hold for $\tc$ as well? More specifically, given a spectrum with $S^1$ structure, the dual also has an $S^1$-structure. Is it the case that the dual of $\tc(A)$ and $\tc(B)$ have actions that agree in some way? If not, how do they fail to be equivalent? 

\item This result suggests that Koszul duality provides duality for field theories. In fact, this result is meant as a test case for a much larger conjecture. This conjecture (which Ralph Cohen introduced us to) would be that the topological chiral homology of an $\mbf{E}_n$-algebra $A$ is dual to the topological chiral homology of the Koszul dual to $A$. Roughly, given an $\mbf{E}_n$-ring spectrum, it has a Koszul dual $\mbf{E}_n$-spectrum $B$. Suppose we are also given an oriented $n$-manifold $M$.  Is it the case that $\int_M A \simeq D\left( \int_M B\right)$? See below for definitions and a more precise conjecture. 
\end{enumerate}

\subsection{Outline}

In section 2 we give the basic definitions and recall some important definitions about $\infty$-categories. 

In section 3 we discuss certain rigid models for $\infty$-categories that will be extremeley useful for us. We will also discuss some compactness conditions.

In section 4 we tie all of the above together and prove the main result. 

In section 5 we present some (almost certainly true) conjectures, and then mention the relationship to topological field theories (in the sense of Hopkins-Lurie, i.e. $(\infty,n)$-functors). A consequence of the work above is that Koszul duality is in some sense the appropriate notion of duality for field theories. 

\subsection{A Note on the Use of $\infty$-categories}

The choice to work with $\infty$-categories was inspired by the pleasing form the result takes in this context. Also, we work heavily with module categories and $\infty$-categories provide the correct language to encode properties of module categories. After all, almost all work with module categories typically involves passing to some ``derived'' setting. 

The choice to work primarily with quasicategories was a pragmatic one --- the work could have been completed in any developed model of $(\infty,1)$-categories. However, Joyal and Lurie have thoroughly developed quasicategories as a model for $(\infty,1)$-categories, and we have all the necessary tools of category theory (limits, colimits, Grothendieck constructions, compactness, etc) at our disposal thanks to them.

\subsection{Notation and Conventions}

\begin{conv}\label{coh_nerve}
We note that ``$\infty$-category'' is a shortening of ``$(\infty,1)$-categories'', which is in turn a catch-all term for any one of various model categories for the homotopy theory of homotopy theories (\cite{Bergner},\cite{Rezk},\cite{LurieHTT},\cite{BarwickKan}, \cite{Joyal}).

We will try to carefully distinguish between various models of $(\infty,1)$-categories, however, the default model will be quasicategories. We will say \textit{quasicategory} when we mean a simplicial set with inner horn fillers. We will say \textit{relative category} when we mean a category equipped with weak equivalences. We will say \textit{simplicial category} when we mean a category enriched in simplicial sets (which is \textit{not} the same as a simplicial object in categories). Finally, \textit{spectral categories}, which are model for stable $(\infty,1)$-categories will be called such. For ease of reference, we'll list the various categories of categories:
\begin{itemize}
\item[--]$\cat_{\infty}$ - quasicategories 
\item[--]$\cat_{\text{rel}}$ - the category of small relative categories
\item[--] $\cat_{\Delta}$ - the category of small simplicial categories
\item[--] $\cat_{\Sp}$ - the category of spectral categories
\item[--] $\cat^{\operatorname{Ex}}_\infty$ - the quasicategory of stable $\infty$-categories
\item[--] $\cat^{\operatorname{perf}}_\infty$ - the quasicategory of small, stable, idempotent complete $\infty$-categories. 
\end{itemize}

These can all be arranged in the diagram below, which deserves some explanation. The downward verticle map $L^H$ is the hammock localization of Dwyer and Kan \cite{DK1}. The symbol ``$\simeq$'' indicates an equivalence of underlying homotopy theories. The left horizontal map $\mbf{N}^{\text{coh}}$ is the homotopy coherent nerve of Cordier-Porter \cite{cordier_porter}, giving an equivalence between the homotopy theory of simplicial categories, and the homotopy theory of simplicial sets with the Joyal model structure. The downward vertical arrows indicate the equivalence of homotopy theories between spectral categories equipped with triangulated equivalences (resp. Morita equivalences) and stable $\infty$-categories (resp. small stable idempotent complete $\infty$-categories) \cite{BGT}.

\[
\xymatrix{
\cat_{\text{rel}}\ar[d]^{\simeq}_{L^H} \ar[dr] &  & \cat^{\text{triang}}_{\text{Sp}} \ar[d]^\simeq & \cat^{\text{Mor}}_{\text{Sp}}\ar[d]^\simeq \\
\cat_{\Delta} \ar[r]_{\mbf{N}^{\text{coh}}}^{\simeq} & \cat_\infty & \cat^{\text{Ex}}_\infty  \ar@{_{(}->}[l] & \cat^{\text{perf}}_\infty \ar@{_{(}->}[l]
}
\]

For later use, we include a short discussion of the passage from relative categories (or model categories) to quasicategories. Given a model category $\mbf{M}$ one can consider it as a relative category by forgetting the fibrations and cofibrations. We can then form the Dwyer-Kan localization $L^H \mbf{M}$ which is a simplicial category. Fibrantly replace this simplicial category to obtain another simplicial category $(L^H \mbf{M})'$ whose mapping spaces are Kan complexes. Then apply the coherent nerve $\mbf{N}^{\text{coh}}(L^H \mbf{M})'$ to obtain a quasicategory. 

Another procedure is to consider the subcategory $\mbf{M}^c$ of cofibrant objects of $\mbf{M}$ and weak equivalences, $W$, as a marked simplicial set $(\mbf{M}^c, W)$ (see \cite[Ch. 3]{LurieHTT}). By taking the fibrant replacement of the marked simplicial set $(\mbf{M}^c, W)$, we obtain a simplicial set $\mbf{M}^c [W^{-1}]$ which is a quasicategory and a model for the localization. 

For a model category that is already a simplicial model category, one can pass to a quasicategory by simply considering the subcategory $\mbf{M}^\circ$ of fibrant-cofibrant objects and then taking the coherent nerve $\mbf{N}^{\text{coh}}(\mbf{M}^{\circ})$. One may then ask if this agrees with the procedure in the previous paragraph. In fact, by work of Dwyer and Kan \cite{DK3} it does (this fact is proved in a slightly different context in \cite[1.3.4.20]{LurieHA}). 

All localization procedures above produce categorically equivalent quasicategories, so we are free to use any such procedure.

\end{conv}

\subsection{Acknowledgements}

The author would like to thank his advisors, Ralph Cohen and Andrew Blumberg for their patience, encouragement, technical help, inspiration, and general wisdom. He would also like to thank Clark Barwick, John Lind, Anna Marie Bohmann and Cary Malkiewich for helpful conversations. Finally, he is grateful to Matt Pancia for a very careful reading which caught many typos and expositional issues. 

\section{Koszul Duality and Compactness Conditions}

In this section we give the basic definition of Koszul duality, as well as how it relates to categories of modules. We'll demonstrate that Koszul duality of $\mbf{E}_1$-algebras is, similar to Morita equivalent algebras, something that is detected at the level of module categories. This characterization is what allows for the main result of the paper. 

Excellent treatments of Koszul duality in this context are given in \cite{DGI} and \cite{BlumbergMandellKoszul}.

\subsection{Koszul Duality: Basic Definition}

We start with a few definitions. For the moment we assume we are working in a modern category of spectra, e.g. \cite{EKMM}. We pick the model of Elmendorff-Kriz-Mandell-May for technical convenience and the fact that key previous work on derived Koszul duality \cite{BlumbergMandellKoszul} uses this category. 

Classically, Koszul duality is defined for (differential graded) associative algebras over a field. However, we want to work with spectra and so we need a sensible analogue of both associative algebra and a ground field. The former is provided by the definition and remark below. For a ground ``field'' we take the sphere spectrum, $S$. However, we note that we could have chosen \textit{any} ring spectrum and the analysis below would work.

\begin{defn}
An \textbf{$\mbf{E}_1$-ring spectrum} is a spectrum that allows an action by the $\mbf{E}_1$-operad. \cite{may_operads}
\end{defn}

\begin{rmk}
In any modern model category of spectra, $\mbf{E}_1$-spectra can be rectified to be associative on-the-nose \cite{EKMM}. 
\end{rmk}

\begin{defn}
A ring spectrum  $A$ is \textbf{augmented} if it is equipped with a ring map to the sphere spectrum $A \to S$. 
\end{defn}
\begin{rmk}
This augmentation is absolutely crucial: it gives $S$ an $A$-module structure, and we could not state our results without it. The fact that $S$ is an $A$-module will be used repeatedly below, and in fact the compactness or non-compactness of $S$ as an $A$-module will play an important role. 
\end{rmk}

Before we proceed, some notation:

\begin{nota}
For lack of better notation, Lurie's notation for the Koszul \cite{LurieDAGX} dual will be co-opted. For an augmented algebra $A$, we let $\mf{D}A$ denote the Koszul dual. Furthermore, in the sequel $\operatorname{RHom}$ will always denote derived homomorphisms. The derived homomorphisms between $X$ and $Y$ in a model category are computed by cofibrantly replacing $X$ (typically denoted $QX$) and fibrantly replacing $Y$ (typically denoted $RY$) and computing $\operatorname{Hom}(QX, RY)$. In the situation we are working in, EKMM spectra, all objects are fibrant, and so fibrant replacement is unnecessary. 
\end{nota}

Since we are trying to generalize a classical definition, we should at least examine that definition. In Koszul duality (sometimes called Bar-Cobar duality, \cite{HMS}) for algebras, suppose we have an augmented $k$-DGA $M$. We can consider $\mf{D}(M) = \operatorname{RHom}_M (k, k) \simeq \hom( B(k, M, k), k)$, which is also a $k$-algebra, so we can consider $\operatorname{RHom}_{\mf{D}M} (k, k)$. Under some hypotheses, we will have $M \simeq \operatorname{RHom}_{\mf{D}M}(k, k)$. In this case $M$ and $\mf{D}M$ are said to be Koszul dual. 

The goal is to now generalize this construction to ring spectra. It will end up taking a slightly different (though equivalent) form. 

If we are to generalize the construction above, we should have that augmented ring spectra $A$ and $B$ are Koszul dual if $B \simeq \operatorname{RHom}_A (S, S)$. However, we don't just want an abstract equivalence, we want a map. By adjointness, in order to have such a map $B \to \operatorname{RHom}_A (S, S)$, it is necessary that we have a map 
\[
A \sma B \to S
\]
that extends the augmentation of both $A$ and $B$. Another way of saying this is that we want the $A$-module and $B$-module structures on $S$ to commute. Furthermore, in order for something to truly be a dual, we would want $A \simeq \operatorname{RHom}_B (S, S)$ as well. This leads us to the following definition. 

\begin{defn}\cite{DGI}
 The ring spectrum $A$ gives a map $S \to S$ via the $A$-module structure on $S$ and this map is obviously $\operatorname{End}_A (S,S)$-equivariant. Thus, we have a natural map
\[
A \to \operatorname{REnd}_{\operatorname{REnd}_A (S,S)} (S,S)
\]
called the \textbf{double centralizer}. $A$ is \textbf{dc-complete} if that map is an equivalence. 
\end{defn}

\begin{rmk}
Here, and in other places, we use the terminology (e.g. dc-complete) of \cite{DGI}. 
\end{rmk}

We are now able to present the definition of (derived) Koszul duality that we will use:

\begin{defn}
Augmented ring spectra $A$ and $B$ are \textbf{Koszul dual} if

\begin{enumerate} 
\item  The $A$ and $B$-module structures induced by augmentations commute. More precisely we have a commutative diagram
\[
\xymatrix@R=.4cm{
A \sma B \sma S \ar[dd]_{\operatorname{tw} \sma \id} \ar[r] & A \sma S \ar[dr] & \\
& & S \\
B \sma A \sma S \ar[r] & B \sma S \ar[ur] & 
}
\]
\item The map $A \to S$ is dc-complete. 
\end{enumerate}
\end{defn}

\begin{rmk}
The commutativity of the diagram above is the exact version of the intuition that we want a map $B \to \operatorname{REnd}_A (S, S)$ --- such a map exists when the module structures commute. 
\end{rmk}

\begin{rmk}
We would like to emphasize that there are many definitions of what it means for two objects to be Koszul dual. In particular, there are definitions for operads  and also other types of algebra, e.g. $\mbf{E}_n$-algebras \cite{HMS, GinzburgKapranov, FresseEn, LurieDAGX}. They all have a flavor similar to the above, and typically involve bar-cobar constructions. 
\end{rmk}

\begin{rmk}
A reasonable, though rough,  way to think about this is that $\mbf{E}_1$-ring spectra $A$ and $B$ are Koszul dual if
\begin{align*}
\ext_A (S, S) &\simeq B\\
\ext_B (S, S) &\simeq A
\end{align*}
which is quite similar to the case of algebras. What keeps this from being an exact definition is the lack of required maps.
\end{rmk}

\begin{example}
From the introduction it should be clear that our motivating example is $A = \Sigma^\infty_+ \Omega X$ and $B = DX$ where $X$ is simply connected. Particularly nice point-set models of this case are discussed in \cite{BlumbergMandellKoszul}. We will not give complete details as to why these are Koszul dual, but it is easy to sketch. If we assume infinite suspension, $\Sigma^\infty_+$, commutes with the bar construciton, then, since the two-sided bar construction models derived smash product,
\begin{align*}
\operatorname{REnd}_{\Sigma^\infty_+ \Omega X} (S, S) &\simeq \hom(S \sma^{L}_{\Sigma^\infty_+ \Omega X} S, S) \simeq \hom(B(S, \Sigma^\infty_+\Omega X, S), S)\\
&\simeq \hom(\Sigma^\infty_+ B(\ast, X, \ast), S) \simeq \hom(X, S) := DX
\end{align*}
Similarly, if we assume the Spanier-Whitehead dual commutes with the bar construction then 
\begin{align*}
\operatorname{REnd}_{DX} (S, S) &\simeq \hom(S \sma^L_{DX} S, S) \simeq \hom(B(S, DX, S), S)\\
&\simeq  \Sigma^\infty_+ B(\ast, X, \ast) \simeq \Sigma^\infty_+ \Omega X
\end{align*}
\end{example}

\begin{example}
The map $\mbf{S}_p \to H\mbf{F}_p$ is dc-complete \cite{DGI}. As noted in \cite{DGI} this should be related to the convergence of the Adams spectral sequence, however, the author knows of no source where this is spelled out. 
\end{example}

\subsection{Compactness in triangulated, model and $\infty$-categories}

For us, the key part of Koszul duality is the relationship that arises between the category of compact modules over a ring spectrum and the category of compact modules over its Koszul dual. In order to clearly state the various relationships, we will need a discussion of some category theoretic notions.  We will discuss model and $\infty$-categorical elaborations of these notions and  relate these. This will allow us to translate between various notions of compactness --- this will be necessary to translate results proved in model categories to $\infty$-categorical results. 

We begin with the notion of compactness in ordinary categories. 

\begin{defn}\label{compact}
Let $\mc{C}$ be an (ordinary) category which admits filtered colimits and $c \in \mc{C}$ an object. Then $c$ is \textbf{compact} if morphisms out of it commute with filtered colimits. That is, if $K$ is a filtered category and $\mbf{D} : K \to \mc{C}$ is a diagram then
\[
\operatorname{Mor}(c, \colim_K \mbf{D}) \xleftarrow{\sim} \colim_K \operatorname{Mor}(c, \mbf{D})
\]
\end{defn}

The homotopy category of spectra has a great deal of structure, which will be using for definitions below. In particular, the homotopy category of spectra is triangulated.  A \textbf{triangulated} (ordinary) category is an additive category, $\mbf{C}$,  together with an  endofunctor $\Sigma: \mbf{C} \to \mbf{C}$, which is an equivalence,  and a set of \textit{distinguished triangles} $X \xrightarrow{f} Y \to Z \to \Sigma X$ satisfying certain reasonable axioms. The distinguished triangles should be thought of as fiber or cofiber sequences and the axioms reflect that intuition. For a complete definition see \cite{hovey}.

\begin{rmk}
One may think of a triangulated category as the homotopy category of a category satisfying the usual formal properties that the category of spectra or chain complexes satisfy. 
\end{rmk}

Using the triangulated structure of a category, we can define another notion of ``compactness'' which corresponds more closely to the intuition that something compact should be finitely generated. 

\begin{defn}
A \textbf{thick subcategory} of a triangulated category is a category that is closed under retracts and the formation of fibers and cofibers, in other words if it is closed under triangles. 
\end{defn}

The objects which lie in a thick subcategory, can in some sense be thought of as ``finitely generated'' as in the following example. 

\begin{example}
Let $\operatorname{Ho}(\operatorname{Sp})$ be the homotopy category of the model category of spectra (we will see below why this is triangulated). Then, we may define $\mc{T}$ to be the smallest thick subcategory containg $S$, the sphere spectrum. The category $\mc{T}$ will then be the category of finite $S$-modules, in other words, spectra built out of a finite number of cells and retracts of such. 
\end{example}

Thick subcategories are closed under certain limits and colimits, and one could imagine other categories closed under larger classes of limits and colimits. One such type of category, which will be useful in the sequel, is the following:

\begin{defn}
A \textbf{localizing subcategory} is a thick subcategory closed under arbitrary coproducts. 
\end{defn}

We now move on to defining the corresponding notions for model categories. In particular, we will be working with stable model categories, which we must define. For a much more complete discussion, see \cite{hovey}. The definition is initially due to Quillen. 

\begin{defn}
A \textbf{stable model category} $\mbf{M}$ is a pointed model category (i.e. it has an object $\ast$ which is both initial and final) such that the suspension and loop functors constructed in \cite{quillen} ($\Sigma: \operatorname{Ho} \mbf{M} \to \operatorname{Ho} \mbf{M}$ and $\Omega: \operatorname{Ho} \mbf{M} \to \operatorname{Ho} \mbf{M}$ respectively)  are inverse equivalences. 
\end{defn}

\begin{example}
The classic and motivating example is any model category of spectra. 
\end{example}

The homotopy category of any stable model category has a rich structure. Most important for us will be the following:

\begin{prop}\cite{hovey}
Let $\mbf{M}$ be a stable model category. Then $\operatorname{Ho}(\mbf{M})$ is triangulated. 
\end{prop}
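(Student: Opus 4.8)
The plan is to follow Quillen's original construction of cofiber sequences in a pointed model category and then use stability to upgrade these to a triangulation, exactly as carried out in detail in \cite{hovey}.

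First, since $\mbf{M}$ is pointed, $\operatorname{Ho}(\mbf{M})$ acquires a suspension functor $\Sigma$ and a loop functor $\Omega$, adjoint to one another, constructed by choosing (co)fibrant replacements and forming cones $CX = X \wedge I$ and paths objects. Quillen shows that for any map $f : X \to Y$ there is a cofiber sequence $X \xrightarrow{f} Y \to Cf \to \Sigma X \to \cdots$, natural up to homotopy, where $Cf$ is the homotopy cofiber; dually one has fiber sequences built from path objects. The object $\Sigma X$ is a cogroup object in $\operatorname{Ho}(\mbf{M})$, so $[\Sigma X, -]$ is naturally group-valued. Stability enters here: because $\Sigma$ is an equivalence, every $\Sigma X$ is a double suspension $\Sigma X \cong \Sigma^2(\Sigma^{-1} X)$, and an Eckmann--Hilton argument forces these groups to be abelian. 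Combined with the existence of finite products and coproducts and a zero object, this shows $\operatorname{Ho}(\mbf{M})$ is additive.

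Next I would take the shift functor to be $\Sigma$ (an equivalence by hypothesis) and declare a triangle distinguished if it is isomorphic in $\operatorname{Ho}(\mbf{M})$ to one of the form $X \xrightarrow{f} Y \to Cf \to \Sigma X$. Axiom (TR1) (identities, existence, closure under isomorphism) is immediate from the mapping-cone construction. For (TR2) (rotation) one must check that $Y \to Cf \to \Sigma X \xrightarrow{-\Sigma f} \Sigma Y$ is again distinguished; this is the comparison $C(Y \to Cf) \simeq \Sigma X$, proved by contracting $CX$ inside $Cf$, and it is here that the sign is forced. The inverse rotation uses the dual (fiber) picture together with the crucial stable fact that cofiber sequences and fiber sequences coincide up to the equivalence $\Omega \simeq \Sigma^{-1}$. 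Axiom (TR3) (the fill-in axiom) follows from the functoriality up to homotopy of the mapping cone together with the lifting and extension properties of the model structure.

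The main obstacle is the octahedral axiom (TR4). The plan is to take composable maps $X \xrightarrow{f} Y \xrightarrow{g} Z$ and build, from the cofiber sequences of $f$, $g$, and $gf$, a homotopy-commutative diagram of mapping cones; the required distinguished triangle $Cf \to C(gf) \to Cg \to \Sigma Cf$ then arises from an explicit identification $C(Cf \to C(gf)) \simeq Cg$ of iterated cofibers, obtained by a gluing argument using that $CX$ and the relevant cones are contractible. Keeping all the coherence homotopies and signs consistent, rather than any single step, is the technical heart of the argument; everything else is formal once additivity and the cofiber-sequence package are in hand. As an alternative I would mention that one may invoke Hovey's theorem directly, or transport the triangulation along an equivalence with a stable model category already known to be triangulated.
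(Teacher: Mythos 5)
Your proposal is correct and is essentially the argument of the reference the paper cites for this statement (Hovey, Chapters 6--7): the paper itself gives no proof, and your outline --- Quillen's cofiber/fiber sequence machinery on a pointed model category, additivity from the cogroup structure on suspensions plus stability, distinguished triangles as mapping-cone sequences, and the verification of (TR1)--(TR4) with the octahedral axiom handled by iterated cofibers --- is exactly how that source proceeds. The only cosmetic caveat is that in a general model category the cone is built from a cylinder-object factorization rather than a literal $X \wedge I$, but your phrasing in terms of (co)fibrant replacements already covers this.
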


The reason that this is important is that it allows us to define fibers and cofibers in the homotopy category. In general, homotopy categories need not admit such constructions. 

We define compactness in a (not necessarily stable) model category in exact analogy with compactness in an ordinary category. 

\begin{defn}
Let $\mbf{M}$ be a model category and let $K$ be a filtered category and $\mbf{D}: K \to \mbf{M}$ be a diagram in $\mbf{M}$. Then an object $m \in \mbf{M}$ is \textbf{compact} if the following map is a weak equivalence
\[
\operatorname{map}(m, \hocolim_K \mbf{D}) \xleftarrow{\sim} \hocolim_K \operatorname{map}(m, \mbf{D})
\]
Here $\operatorname{map}(-,-)$ is the DK mapping space constructed in \cite{DK3}. 
\end{defn}

One could ask how this interacts with the definition of compactness for ordinary categories. In particular, if an object $m$ is compact in $\mbf{M}$ is the image of $m$ compact in $\operatorname{Ho}(\mbf{M})$? In many cases this is true, as we will see below.  

We move on to thick and localizing subcategories of stable model categories. In the case of triangulated categories we defined some subcategory to be thick or localizing if it was closed under taking fibers and cofibers. We can certainly define this for the homotopy category of a stable model category. However, we would like a good point-set notion as well. To this end, we offer the following definition. 

\begin{defn}\label{triang_model}
Let $\mbf{M}$ be a stable model category. A subcategory $\mbf{T}$ is \textbf{thick} if it is closed under weak equivalences, homotopy cofibers, homotopy fibers and retracts. Further, $\mbf{T}$ is localizing if it is thick and closed under arbitrary homotopy coproducts. 
\end{defn}

\begin{prop}
Let $\mbf{M}$ be a stable model category and let $\mbf{T}$ be a thick subcategory of $\mbf{M}$. Then $\operatorname{Ho}(\mbf{T})$ is a thick subcategory of $\operatorname{Ho}(\mbf{M})$. Conversely, if $T$ is a thick subcategory of $\operatorname{Ho}(\mbf{M})$, then the homotopy theoretic essential image of $T$ in $\mbf{M}$ is a thick subcategory of $\mbf{M}$. 
\end{prop}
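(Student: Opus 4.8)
The plan is to push everything through the dictionary, built into the construction of the triangulated structure on $\Ho(\mbf{M})$ (\cite{hovey}), between homotopy fiber and cofiber sequences in $\mbf{M}$ and distinguished triangles in $\Ho(\mbf{M})$: every distinguished triangle $X \to Y \to Z \to \Sigma X$ is isomorphic in $\Ho(\mbf{M})$ to one in which $X \to Y$ is modelled by a cofibration of cofibrant objects and $Z$ is its honest cofiber, so $Z$ is the image of the homotopy cofiber of $X \to Y$ (dually for fibers), while $\Sigma$ and $\Omega$ are the homotopy cofiber, resp. fiber, of the maps to and from the zero object. Throughout I read $\Ho(\mbf{T})$ as the full subcategory of $\Ho(\mbf{M})$ spanned by the objects of $\mbf{T}$; since $\mbf{T}$ is full in $\mbf{M}$ and closed under weak equivalences, this subcategory is automatically closed under isomorphism in $\Ho(\mbf{M})$ and agrees with the localization of $\mbf{T}$ at its weak equivalences.

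For the first assertion I would verify the two closure conditions defining a thick subcategory of a triangulated category. First note $0 \in \mbf{T}$, being a retract of any object of $\mbf{T}$, so that $\Sigma$ and $\Omega$ preserve $\mbf{T}$. Now let $X \to Y \to Z \to \Sigma X$ be distinguished with $X, Y \in \Ho(\mbf{T})$. Represent $X \to Y$ by a map $\tilde f \colon X' \to Y'$ of cofibrant-fibrant objects; by closure of $\mbf{T}$ under weak equivalences we may take $X', Y' \in \mbf{T}$, and then $Z$ is isomorphic in $\Ho(\mbf{M})$ to the homotopy cofiber of $\tilde f$, which lies in $\mbf{T}$ by closure under homotopy cofibers, so $Z \in \Ho(\mbf{T})$. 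Applying the same argument to the rotations of the triangle (using closure of $\mbf{T}$ under homotopy fibers and invariance under $\Sigma,\Omega$) shows that if any two of $X,Y,Z$ lie in $\Ho(\mbf{T})$ then so does the third. For retracts: a retract diagram in $\Ho(\mbf{M})$ whose middle term lies in $\Ho(\mbf{T})$ lifts, after cofibrant-fibrant replacement, to honest maps exhibiting the smaller object as a retract up to homotopy of an object of $\mbf{T}$, which therefore lies in $\mbf{T}$ by Definition \ref{triang_model}; hence its class lies in $\Ho(\mbf{T})$.

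For the converse, let $T \subseteq \Ho(\mbf{M})$ be thick and let $\mbf{T}$ be the full subcategory of $\mbf{M}$ on objects whose image in $\Ho(\mbf{M})$ is isomorphic to an object of $T$. Closure of $\mbf{T}$ under weak equivalences is immediate, since weakly equivalent objects have isomorphic images. Given a map $f \colon A \to B$ in $\mbf{T}$, extend its image to a distinguished triangle in $\Ho(\mbf{M})$; the third vertex is isomorphic to the image of the homotopy cofiber of $f$ and, since $T$ is closed under the cofibers of its triangles, lies in $T$, so the homotopy cofiber of $f$ lies in $\mbf{T}$; homotopy fibers are handled by rotating. Finally, an honest retract in $\mbf{M}$ of an object of $\mbf{T}$ maps to a retract in $\Ho(\mbf{M})$ of an object of $T$, hence its image lies in $T$, so $\mbf{T}$ is closed under retracts.

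The only step that is not a formal translation is the treatment of retracts in the first assertion, since a retract in $\Ho(\mbf{M})$ lifts only to a retract \emph{up to homotopy} in $\mbf{M}$, not necessarily a strict one. I expect this to be harmless: Definition \ref{triang_model}'s ``closed under retracts'' is most naturally read, in the model-categorical setting, as closure under homotopy retracts, and even if one demands strict retracts one can first split the relevant homotopy idempotent — possible because a model category is bicomplete, so $\Ho(\mbf{M})$ has countable coproducts — to promote the homotopy retract to a strict one before invoking the hypothesis. All remaining verifications are direct consequences of the triangulated structure on $\Ho(\mbf{M})$ together with closure of $\mbf{T}$ (resp. $T$) under weak equivalences, homotopy (co)fibers, and retracts.
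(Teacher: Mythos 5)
The paper states this proposition without proof, so the comparison is against the evident argument; your proposal is essentially that argument, and most of it is fine: the two-out-of-three treatment of triangles in the first assertion (via bifibrant representatives in $\mbf{T}$ and uniqueness of cones) and the entire converse are correct as written. The one real issue is the step you flag yourself, and there the patch you offer does not work. Splitting the homotopy idempotent $e = i\circ r$ only reproduces $Y$ up to isomorphism in $\operatorname{Ho}(\mbf{M})$; it does not exhibit anything weakly equivalent to $Y$ as a \emph{strict} retract of an object of $\mbf{T}$, and the telescope used to split it is built from countable coproducts, which a thick --- as opposed to localizing --- subcategory has no reason to contain, so closure properties of $\mbf{T}$ cannot be applied to it. Hence under the strict reading of ``retracts'' in Definition \ref{triang_model}, your argument as written leaves a gap at exactly the point where thickness (rather than mere closure under triangles) is at stake.

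The gap closes by a different, standard rectification. Take bifibrant representatives and honest maps $i\colon Y \to X$, $r\colon X \to Y$ with $r i \simeq \id_Y$, where $X \in \mbf{T}$. Factor $i$ as a cofibration $j\colon Y \to M$ followed by an acyclic fibration $q\colon M \to X$; then $M \in \mbf{T}$ by closure under weak equivalences. The composite $\rho = r q\colon M \to Y$ satisfies $\rho j \simeq \id_Y$, witnessed by a right homotopy $K\colon Y \to P(Y)$ with $d_0 K = \rho j$, $d_1 K = \id_Y$. Since $Y$ is fibrant, $d_0\colon P(Y) \to Y$ is an acyclic fibration, so the lifting problem against the cofibration $j$ (top map $K$, bottom map $\rho$) has a solution $L\colon M \to P(Y)$, and $\rho' = d_1 L$ satisfies $\rho' j = \id_Y$ on the nose. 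Thus $Y$ is a strict retract of $M \in \mbf{T}$, hence $Y \in \mbf{T}$, giving retract-closure of $\operatorname{Ho}(\mbf{T})$. (If instead one reads ``retracts'' in Definition \ref{triang_model} homotopy-invariantly, as your first suggestion does, this step is immediate and your proof is complete as it stands.)
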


We will also be needing all of the above notions for $\infty$-categories.  Most of these definitions appear in \cite{LurieHA}, \cite{LurieHTT}, with the exception of thickness. 

\begin{defn}\cite[Def. 5.3.1.7]{LurieHTT}
An $\infty$-category $\mc{C}$ is \textbf{filtered} if for every $\omega$-small simplicial set $K$ and every functor $f: K \to \mc{C}$, $f$ can be extended to a functor on the right cone $\overline{f} : K^{\rhd} \to \mc{C}$. 
\end{defn}

\begin{defn}\cite[Def. 5.3.4.5]{LurieHTT}
Let $\mc{C}$ be an $\infty$-category and $C \in \mc{C}$ an object. Then $C$ is \textbf{compact} if the functor $j_C : \mc{C} \to \widehat{\mc{S}}$ corepresented by $C$ preserves filtered colimits.  
\end{defn}

Having collected definitions of compactness in ordinary categories, model categories and $\infty$-categories, we proceed to relate them. To do this, we need the following result, which the author first learned this in \cite{toen_vaquie}.

\begin{prop}\label{hocolim_colim}
Let $\mbf{M}$ be a compactly generated model category, $K$ a filtered category, and $\mbf{D}:K \to \mbf{M}$ a diagram. Then the image of the natural map
\[
\hocolim_{K} \mbf{D}(k) \to \colim_K \mbf{D}(k)
\]
in $\operatorname{Ho}(\mbf{M})$ is an isomorphism. 
\end{prop}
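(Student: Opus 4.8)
The plan is to recognize $\hocolim_K$ as the total left derived functor of $\colim_K$ and thereby reduce the statement to the assertion that filtered colimits in $\mbf{M}$ preserve weak equivalences, which is the point at which compact generation enters. Since $\mbf{M}$ is cofibrantly generated and $K$ is small, the diagram category $\mbf{M}^K$ carries the projective model structure, in which weak equivalences and fibrations are detected objectwise, and $\colim_K\colon\mbf{M}^K\to\mbf{M}$ is left Quillen (its right adjoint, the constant-diagram functor, visibly preserves fibrations and trivial fibrations). So $\hocolim_K\mbf{D}$ is computed as $\colim_K Q\mbf{D}$ for a projectively cofibrant replacement $q\colon Q\mbf{D}\xrightarrow{\sim}\mbf{D}$ (an objectwise weak equivalence), and the natural comparison map $\hocolim_K\mbf{D}\to\colim_K\mbf{D}$ of the statement is, in $\Ho(\mbf{M})$, the image of $\colim_K(q)\colon\colim_K Q\mbf{D}\to\colim_K\mbf{D}$. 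Thus it is enough to show that $\colim_K$ sends objectwise weak equivalences to weak equivalences.

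To do this I would factor a given objectwise weak equivalence $g\colon\mbf{X}\to\mbf{Y}$, in the projective model structure on $\mbf{M}^K$, as $g=p\circ i$ with $i\colon\mbf{X}\to\mbf{Z}$ a projective acyclic cofibration and $p\colon\mbf{Z}\to\mbf{Y}$ a projective fibration; by two-out-of-three $p$ is then objectwise a \emph{trivial} fibration. The $i$-half is purely formal: a projective acyclic cofibration is a retract of a transfinite composite of pushouts of the generating maps $F_k(j)$ (with $j$ ranging over the generating acyclic cofibrations $J$ of $\mbf{M}$ and $F_k$ left adjoint to evaluation at $k\in K$), and since $\colim_K\circ F_k\cong\id_{\mbf{M}}$ and $\colim_K$ is a left adjoint preserving retracts, $\colim_K(i)$ is again a retract of a transfinite composite of pushouts of maps in $J$, hence an acyclic cofibration in $\mbf{M}$. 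The $p$-half is the step that uses the hypothesis: because the domains and codomains of the generating cofibrations of $\mbf{M}$ are compact and $K$ is filtered, any lifting problem of such a map against $\colim_K(p)$ factors through $\mbf{Z}(k)\to\mbf{Y}(k)$ for some $k\in K$ (using filteredness to merge the two factorizations of source and target, and compactness of the source once more to make the square commute at a finite stage), where it is solvable since $p$ is objectwise a trivial fibration; hence $\colim_K(p)$ is a trivial fibration in $\mbf{M}$. Therefore $\colim_K(g)=\colim_K(p)\circ\colim_K(i)$ is a weak equivalence, which completes the reduction.

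The step I expect to be the main obstacle — indeed the only non-formal point — is the homotopy-invariance of filtered colimits, i.e.\ the $p$-half above: this is precisely the property that ``compactly generated'' (as opposed to merely ``cofibrantly generated'') is meant to supply, and in the generality we need it is established in \cite{toen_vaquie}. A shorter but less self-contained presentation would simply invoke the fact that in a compactly generated model category filtered colimits are homotopy filtered colimits and deduce the proposition immediately from the first paragraph.
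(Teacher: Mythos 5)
Your argument is correct. Note, however, that the paper does not actually prove this proposition: it is quoted from \cite{toen_vaquie} with no internal argument, so what you have produced is a self-contained reconstruction of the standard proof behind that citation rather than a variant of a proof given in the text. Your decomposition is the right one: identify $\hocolim_K$ with the total left derived functor of the left Quillen functor $\colim_K\colon\mbf{M}^K\to\mbf{M}$ for the projective model structure, reduce the statement to the claim that $\colim_K$ sends objectwise weak equivalences to weak equivalences, and prove that claim by factoring into a projective acyclic cofibration (handled formally, since $\colim_K$ is left Quillen) followed by an objectwise trivial fibration, where the only non-formal input is the one you isolate --- that a filtered colimit of objectwise trivial fibrations is a trivial fibration. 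That step is correct as you sketch it: right lifting against a generating cofibration $A\to B$ is checked by factoring $A\to\colim_K\mbf{Z}$ and $B\to\colim_K\mbf{Y}$ through a common finite stage, forcing the square to commute at a possibly later stage (this uses injectivity as well as surjectivity of $\colim_K\operatorname{Mor}(c,\mbf{D})\to\operatorname{Mor}(c,\colim_K\mbf{D})$), and then lifting there. The one caveat is that the paper never defines ``compactly generated model category''; your argument requires the domains and codomains of the generating cofibrations to be compact in the strong sense of Definition \ref{compact} (maps out of them commute with all filtered colimits), which is consistent with the paper's usage and with \cite{toen_vaquie}, but you should state that hypothesis explicitly since weaker notions of compactness (e.g.\ smallness only relative to cell complexes) would not immediately give the factorization of the lifting square. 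With that reading fixed, your proof is complete, and it buys a self-contained argument where the paper offers only a citation.
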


With this in place, we can compare various notions of compactness. 

\begin{thm}\label{compact_comparison}
Let $\mbf{M}$ be a compactly generated model category and let $\mc{M}$ be the corresponding $\infty$-category $\mbf{N}(\mbf{M})[W^{-1}]$ and let $\operatorname{Ho}(\mc{M})$ be the corresponding homotopy category (which is the same as $\operatorname{Ho}(\mbf{M})$). Then
\begin{enumerate}
\item An object $m \in \mbf{M}$ is compact if and only if its image in $\operatorname{Ho}(\mbf{M})$ is. 
\item An object $m \in \mbf{M}$ is compact if and only its corresponding element in $\mc{M}$ is. 
\end{enumerate}
\end{thm}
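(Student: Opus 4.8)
The plan is to bootstrap from Proposition \ref{hocolim_colim}, which identifies $\hocolim$ with $\colim$ in $\Ho(\mbf{M})$ for a compactly generated model category, together with the dictionary between the three mapping-object constructions: $\operatorname{Mor}$ in $\Ho(\mbf{M})$, the Dwyer--Kan mapping space $\operatorname{map}(-,-)$ in $\mbf{M}$, and the mapping space in the quasicategory $\mc{M} = \mbf{N}(\mbf{M})[W^{-1}]$. The key external input is that these mapping spaces agree (as spaces, respectively as $\pi_0$), which is exactly what the localization comparisons recalled in Convention \ref{coh_nerve} provide: $\Ho(\mc{M}) = \Ho(\mbf{M})$ and the mapping space of $\mc{M}$ computes the DK mapping space of $\mbf{M}$.

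For part (1), I would argue in both directions. For the ``only if'' direction, suppose $m$ is compact in $\mbf{M}$ and let $\mbf{D} : K \to \Ho(\mbf{M})$ be a filtered diagram in the homotopy category; lift it (up to the usual care about rectifying a homotopy-coherent diagram, which is harmless for a filtered indexing category, or else replace $K$ by a suitable cofinal poset as in \cite{LurieHTT}) to an honest diagram $\widetilde{\mbf{D}} : K \to \mbf{M}$. Then $\colim_K \mbf{D} \cong \hocolim_K \widetilde{\mbf{D}}$ in $\Ho(\mbf{M})$ by Proposition \ref{hocolim_colim}, and
\[
\operatorname{Mor}_{\Ho(\mbf{M})}(m, \colim_K \mbf{D}) \cong \pi_0 \operatorname{map}(m, \hocolim_K \widetilde{\mbf{D}}) \cong \pi_0 \hocolim_K \operatorname{map}(m, \widetilde{\mbf{D}}) \cong \colim_K \operatorname{Mor}_{\Ho(\mbf{M})}(m, \mbf{D}),
\]
using compactness of $m$ in $\mbf{M}$ in the middle step and the fact that $\pi_0$ commutes with filtered homotopy colimits of spaces at the end. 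Conversely, if the image of $m$ is compact in $\Ho(\mbf{M})$, one runs a similar chain but now must upgrade the $\pi_0$-statement to a statement about the full mapping space; here I would use that in a compactly generated stable-type situation the higher homotopy of $\operatorname{map}(m,-)$ is controlled by $\operatorname{Mor}_{\Ho(\mbf{M})}(\Sigma^n m, -)$, and each suspension $\Sigma^n m$ is again compact in $\Ho(\mbf{M})$ since suspension is an equivalence preserving compact objects, so filtered colimits are preserved on every homotopy group.

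Part (2) is then essentially a translation: compactness of the image of $m$ in $\mc{M}$ means the corepresented functor $j_m : \mc{M} \to \widehat{\mc{S}}$ preserves filtered colimits, and since filtered colimits in the quasicategory $\mc{M}$ are computed by homotopy colimits in $\mbf{M}$ (again Proposition \ref{hocolim_colim} plus the standard identification of colimits in a localized quasicategory with homotopy colimits in the model category, cf.\ \cite{LurieHTT}), and mapping spaces in $\mc{M}$ are DK mapping spaces of $\mbf{M}$, the condition is literally the condition that $\operatorname{map}(m, \hocolim_K \mbf{D}) \weakequiv \hocolim_K \operatorname{map}(m, \mbf{D})$, i.e.\ compactness of $m$ in $\mbf{M}$. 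I expect the main obstacle to be the bookkeeping around rectifying homotopy-coherent filtered diagrams and matching the three flavors of filtered colimit and of mapping space precisely enough that the equivalences above are natural rather than merely pointwise; the underlying homotopy-theoretic content is entirely carried by Proposition \ref{hocolim_colim} and the localization equivalences of Convention \ref{coh_nerve}, so the proof is more a careful alignment of definitions than a source of new difficulty.
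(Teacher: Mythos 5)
Your proposal follows the paper's own proof in its essentials. For (1) the paper applies Proposition \ref{hocolim_colim} exactly twice, just as you do: once in $\mbf{M}$ (using that $\operatorname{map}(-,-)$ preserves weak equivalences in each variable) to identify $\operatorname{map}(m,\hocolim_K\mbf{D})$ with $\operatorname{map}(m,\colim_K\mbf{D})$, and once in simplicial sets to identify $\hocolim_K\operatorname{map}(m,\mbf{D})$ with $\colim_K\operatorname{map}(m,\mbf{D})$, then passes to $\pi_0$; for (2) it likewise reduces everything to the comparison of homotopy colimits in the Dwyer--Kan localization with quasicategorical colimits via \cite[Th.\ 4.2.4.1]{LurieHTT} (with a remark that framings make the mapping spaces fibrant), together with the identification of mapping spaces in $\mc{M}$ with DK mapping spaces. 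The one place you go beyond the paper is the converse direction of (1): the paper's proof does not address it at all, whereas your argument for it relies on suspension being an equivalence and on the higher homotopy of $\operatorname{map}(m,-)$ being governed by $\operatorname{Mor}_{\Ho(\mbf{M})}(\Sigma^n m,-)$, i.e.\ on stability, which is not among the hypotheses of the theorem. That step is harmless for the paper's eventual applications (module categories over ring spectra, which are stable), but as written it does not upgrade the $\pi_0$-level compactness in $\Ho(\mbf{M})$ to space-level compactness in $\mbf{M}$ for a general compactly generated model category, so you should either add a stability (or at least a pointedness plus generation) hypothesis for that direction or present it, as the paper implicitly does, only in the direction actually used later.
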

\begin{proof}
Item (1) is proved using \ref{hocolim_colim} twice, see the diagram below. The left vertical arrow is an equivalence by applying \ref{hocolim_colim} to the compactly generated model category $\mbf{M}$ and using the fact that $\map(-,-)$ preserves weak equivalences in each variable \cite{hovey}. The right vertical arrow is by applying \ref{hocolim_colim} to $\mc{S}\text{et}_{\Delta}$. 
\[
\xymatrix{
\operatorname{map}(m, \hocolim_K \mbf{D})\ar[d]_{\sim} & \hocolim_{K} \operatorname{map}(m, \mbf{D})\ar[l]_{\sim}\ar[d]_\sim\\
\operatorname{map}(m, \colim_K \mbf{D}) & \colim_K \operatorname{map}(m, \mbf{D})
}
\]
This shows $\operatorname{map}(m, \colim_K \mbf{D})$ and $\colim_K \operatorname{map}(m, \mbf{D})$ are equivalent. Of course, $\operatorname{map}(m, \colim_K \mbf{D})$ and $\colim \operatorname{map} (m, \mbf{D})$ are simplicial sets, but on $\pi_0$ it gives that $m$ is compact ($\pi_0$ commutes with colimits). 

Item (2) is proved by comparing homotopy colimits in a simplicial category (in our case the Dwyer-Kan localization $L^H \mbf{M}$) with colimits in an $\infty$-category. This is done is \cite[Th 4.2.4.1]{LurieHTT}. In order to use \cite[Th 4.2.4.1]{LurieHTT} the simplicial category must be fibrant. However, using framings \cite{hovey}, the mapping spaces can be constructed to be fibrant. 
\end{proof}

Moving on, we can only define the notion of thick and localizing categories in the presence of stability. In order to do this, we have to define stable $\infty$-categories. 

\begin{defn}\cite[Defn. 1.1.1.9]{LurieHA}
A pointed $(\infty,1)$-category is \textbf{stable} if 
\begin{enumerate}
\item Fibers and cofibers exist
\item Any cofiber sequence is a fiber sequence and \textit{vice versa}. 
\end{enumerate}
\end{defn}

In a pointed $(\infty, 1)$-category $\mc{C}$, we can define endofunctors $\Sigma: \mc{C} \to \mc{C}$ and $\Omega: \mc{C} \to \mc{C}$ \cite{LurieHA}. These allow us to define corresponding sususpension and loop endofunctors on $\operatorname{Ho}(\mc{C})$, which provide structure maps for a triangulated category:

\begin{thm}\cite[Thm. 1.1.2.14]{LurieHA}
Let $\mc{C}$ be a stable $\infty$-category, then $\operatorname{Ho}(\mc{C})$ is triangulated. 
\end{thm}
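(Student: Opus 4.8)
The plan is to equip $\operatorname{Ho}(\mc{C})$ with the data of a triangulated category — an additive structure, a shift self-equivalence, and a class of distinguished triangles — and then verify the axioms (TR1)--(TR4), using only the two defining properties of stability (existence of fibers and cofibers, and the coincidence of fiber sequences with cofiber sequences) together with the pasting law for pushout/pullback squares in an $\infty$-category.

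\emph{Additive structure and shift.} First I would observe that any pointed $\infty$-category with finite limits and colimits has a homotopy category enriched in commutative monoids: a zero object together with finite (co)products — which coincide once (co)fibers exist, so that one gets biproducts — endows each $\hom$-set with a natural commutative monoid structure, bilinear in composition. Stability promotes these monoids to abelian groups: since $\Omega$ is an equivalence on $\mc{C}$, every object $Y$ is $\Omega^2$ of some object, so $\map_{\mc{C}}(X,Y)$ is a double loop space and $\pi_0$ of it is an abelian group by Eckmann--Hilton. Hence $\operatorname{Ho}(\mc{C})$ is additive. For the shift, define $\Sigma\colon\mc{C}\to\mc{C}$ as the cofiber of $X\to\ast$ and $\Omega$ as the fiber of $\ast\to X$; stability says precisely that these adjoint endofunctors are mutually inverse equivalences, so they descend to inverse self-equivalences of $\operatorname{Ho}(\mc{C})$, which we take as the shift $[1]=\Sigma$ and $[-1]=\Omega$.

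\emph{Distinguished triangles and (TR1)--(TR3).} Call a triangle $X\xrightarrow{f}Y\to Z\to\Sigma X$ in $\operatorname{Ho}(\mc{C})$ distinguished if it is isomorphic in $\operatorname{Ho}(\mc{C})$ to the triangle read off from a coherent diagram
\[
\xymatrix@R=.45cm{
X \ar[r]^{f} \ar[d] & Y \ar[d] \ar[r] & \ast \ar[d] \\
\ast \ar[r] & Z \ar[r] & W
}
\]
in $\mc{C}$ in which both squares are pushouts; then $W\simeq\Sigma X$ and the bottom-right composite gives the connecting map. To make this precise one organizes such diagrams as objects of the full subcategory of $\fun(\Delta^1\times\Delta^2,\mc{C})$ cut out by the pushout and vanishing conditions, so that ``distinguished'' is well-defined and, because cofibers exist, every $f$ fits into such a diagram; (TR1) is then immediate. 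Axiom (TR3) (the fill-in axiom) follows from functoriality of the pushout: a commuting square between the $f$'s lifts to a map of these diagrams and hence to a morphism of triangles. Axiom (TR2) (rotation) is the pasting law: adjoining to the first pushout square the square with vertices $Y,\ast,Z,\Sigma X$ — itself a pushout, being the composite of the two given pushouts — exhibits $\Sigma X$ as the cofiber of $Y\to Z$, so forward rotations of distinguished triangles are distinguished; rotation backward uses the inverse equivalence $\Omega$. The sign $-1$ on $\Sigma f$ is accounted for by the automorphism of $\Sigma X$ induced by the transposition of the defining pushout square, which is $-\id$.

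\emph{The octahedral axiom, and the main obstacle.} The substantive step is (TR4). For composable $X\xrightarrow{f}Y\xrightarrow{g}Z$ I would build a single coherent diagram
\[
\xymatrix@R=.45cm{
X \ar[r] \ar[d] & Y \ar[r] \ar[d] & \ast \ar[d]\\
\ast \ar[r] & Z \ar[r] \ar[d] & \operatorname{cof}(g) \ar[d]\\
& \ast \ar[r] & \operatorname{cof}(gf)
}
\]
every small square of which is a pushout (constructed as an iterated pushout, i.e.\ a left Kan extension, inside a functor $\infty$-category, not glued together by hand). Repeated use of the pasting law identifies the top row with the cofiber sequence of $f$, the right column with that of $g$, the outer rectangle with that of $gf$, and — the point of the octahedron — the lower-right square with a cofiber sequence $\operatorname{cof}(gf)\to\operatorname{cof}(g)\to\Sigma\,\operatorname{cof}(f)$; chasing the connecting maps assembles the octahedral datum. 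I expect the genuine difficulty to be exactly (a) producing this as one coherent diagram in $\mc{C}$ rather than a patchwork of squares, and (b) bookkeeping the connecting maps and signs so that the resulting triangle in $\operatorname{Ho}(\mc{C})$ literally satisfies the octahedral axiom; everything else is formal manipulation with pushout squares and the equivalences $\Sigma\Omega\simeq\id\simeq\Omega\Sigma$.
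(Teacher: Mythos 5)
This theorem is quoted directly from \cite{LurieHA} (Thm.\ 1.1.2.14); the paper supplies no proof of its own, so the relevant comparison is with Lurie's argument, and your sketch follows it in all essentials: distinguished triangles defined via $\Delta^1\times\Delta^2$ pushout diagrams, additivity of $\operatorname{Ho}(\mc{C})$ via the double-loop/Eckmann--Hilton argument, rotation by pasting of pushout squares, and the octahedral axiom extracted from a single coherent diagram of iterated pushouts. The only slip is minor: finite products and coproducts in $\mc{C}$ coincide because of stability (the fiber/cofiber coincidence), not merely because fibers and cofibers exist --- in a general pointed $\infty$-category with finite (co)limits they differ.
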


With the definition of stable $\infty$-category in place we can define thick and localizing subcategories. 

\begin{defn}
A \textbf{thick subcategory} $\mc{C}' \subset \mc{C}$ of a stable $\infty$-category $\mc{C}$ is a stable sub-category closed under retracts. A \textbf{localizing subcategory} is further closed under taking coproducts.
\end{defn}

\begin{defn}
Let $O$ be a set of objects in $\mc{C}$. The \textbf{thick subcategory generated by $O$} is the smallest thick subcategory of $\mc{C}$ containing $O$. 
\end{defn}

We would like to relate stable model categories and stable $\infty$-categories as well as thick and localizing subcategories. Regarding the first, we have the following proposition, whose proof is essentially an exercise:

\begin{prop}
Let $\mbf{M}$ be a stable model category. Then $\mbf{N}(\mbf{M})[W^{-1}]$ is a stable $\infty$-category. 
\end{prop}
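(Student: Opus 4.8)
The plan is to verify, for $\mc{M} := \mbf{N}(\mbf{M})[W^{-1}]$, the three conditions in the definition of a stable $\infty$-category: that $\mc{M}$ is pointed, that fibers and cofibers exist in $\mc{M}$, and that cofiber sequences in $\mc{M}$ coincide with fiber sequences. Throughout I would use the comparison between homotopy (co)limits in $\mbf{M}$ and (co)limits in $\mc{M}$: combining the Dwyer--Kan localization with \cite[Thm.~4.2.4.1]{LurieHTT} (alternatively \cite[\S 1.3.4]{LurieHA}), the $\infty$-category $\mc{M}$ admits finite limits and colimits, these are computed by the homotopy limits and colimits of $\mbf{M}$, the localization functor $\mbf{N}(\mbf{M}) \to \mc{M}$ sends homotopy (co)cartesian squares to (co)cartesian squares, and $\operatorname{Ho}(\mc{M}) = \operatorname{Ho}(\mbf{M})$ compatibly with the classical localization $\mbf{M} \to \operatorname{Ho}(\mbf{M})$.

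First I would check that $\mc{M}$ is pointed. The object $\ast \in \mbf{M}$ is simultaneously initial and terminal, hence both cofibrant and fibrant, so each homotopy function complex $\operatorname{map}_{\mbf{M}}(\ast, X)$ and $\operatorname{map}_{\mbf{M}}(X, \ast)$ is contractible; equivalently, $\ast$ is the homotopy (co)limit of the empty diagram, which localization preserves. In either formulation the image of $\ast$ in $\mc{M}$ is a zero object. Fibers and cofibers in $\mc{M}$ then exist automatically, being the pullback of a diagram $X \to Z \leftarrow \ast$ and the pushout of a diagram $\ast \leftarrow X \to Y$, both of which $\mc{M}$ admits.

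The real content is the last condition, that a square in $\mc{M}$ is a pushout if and only if it is a pullback. For this I would invoke the standard criterion (\cite[\S 1.4.2]{LurieHA}) that a pointed $\infty$-category with finite limits and colimits is stable precisely when its loop functor $\Omega_{\mc{M}} \colon X \mapsto \ast \times_X \ast$ is an equivalence; this is the only point at which the hypothesis that $\mbf{M}$ is \emph{stable} gets used. Since $\Omega_{\mc{M}}$ is right adjoint to the suspension functor $\Sigma_{\mc{M}} \colon X \mapsto \ast \sqcup_X \ast$ (as in any pointed $\infty$-category possessing the relevant finite limits and colimits), it suffices to show that the unit and counit of $\Sigma_{\mc{M}} \dashv \Omega_{\mc{M}}$ are equivalences. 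Now a natural transformation of functors of $\infty$-categories is an equivalence exactly when it becomes an isomorphism on homotopy categories, so I would pass to $\operatorname{Ho}(\mc{M}) = \operatorname{Ho}(\mbf{M})$: there $\operatorname{Ho}(\Sigma_{\mc{M}})$ and $\operatorname{Ho}(\Omega_{\mc{M}})$ are identified with Quillen's suspension and loop functors \cite{quillen} --- since the pushout and pullback defining them are computed by the homotopy pushout and pullback in $\mbf{M}$ --- and these are inverse equivalences by the very definition of a stable model category, so the unit and counit are natural isomorphisms on $\operatorname{Ho}(\mc{M})$. Hence $\Omega_{\mc{M}}$ (and $\Sigma_{\mc{M}}$) is an equivalence and $\mc{M}$ is stable.

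The step I expect to be the main obstacle --- the only one that is not pure bookkeeping, and presumably what is meant by calling the proof ``essentially an exercise'' --- is showing that the suspension $\ast \sqcup_X \ast$ and loop $\ast \times_X \ast$ formed as $\infty$-categorical pushout and pullback in $\mc{M}$ agree, after passing to $\operatorname{Ho}(\mc{M}) = \operatorname{Ho}(\mbf{M})$, with the Quillen suspension and loop objects of $X$. This is the content of the claim that the localization $\mbf{N}(\mbf{M}) \to \mc{M}$ sends homotopy (co)cartesian squares to (co)cartesian squares and is compatible with the identification $\operatorname{Ho}(\mc{M}) \simeq \operatorname{Ho}(\mbf{M})$, which I would carry out by unwinding the model-categorical construction of homotopy pushouts and pullbacks together with the definition of the homotopy function complex.
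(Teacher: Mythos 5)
The paper does not actually give a proof of this proposition --- it is stated with the remark that the proof is ``essentially an exercise'' --- so there is nothing to compare against line by line; what you have written is a correct solution of that exercise, and it is the standard one. Your reduction is sound: pointedness of $\mbf{N}(\mbf{M})[W^{-1}]$ comes from the zero object of $\mbf{M}$, finite limits and colimits exist and are computed by homotopy (co)limits, and then stability follows from the criterion that a pointed $\infty$-category with finite (co)limits is stable iff $\Omega$ (equivalently $\Sigma$) is an equivalence; checking the unit and counit of $\Sigma \dashv \Omega$ on the homotopy category is legitimate, since equivalences in an $\infty$-category are detected in $\operatorname{Ho}$, and on $\operatorname{Ho}(\mbf{M})$ these functors are identified with Quillen's suspension and loops, which are inverse equivalences by hypothesis. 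Two points deserve an explicit citation rather than being taken for granted, at the same level of care the paper uses elsewhere: (i) the assertion that the underlying $\infty$-category of a \emph{general} (not necessarily simplicial or combinatorial) model category admits finite limits and colimits computed by homotopy (co)limits requires either the framing argument the paper invokes in its comparison-of-compactness theorem (to make the Dwyer--Kan simplicial category fibrant before applying \cite[Thm.\ 4.2.4.1]{LurieHTT}) or a reference such as \cite[\S 1.3.4]{LurieHA} in the combinatorial case; and (ii) the identification of the $\infty$-categorical suspension and loop functors with Quillen's constructions on $\operatorname{Ho}(\mbf{M})$, which you correctly isolate as the only step with real content, is exactly the compatibility of the localization with homotopy (co)cartesian squares. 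With those references supplied, your argument is complete and is presumably the intended one.
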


\begin{rmk}
The notation $\mbf{N}(\mbf{M})[W^{-1}]$ is explained in the introduction \ref{coh_nerve}. 
\end{rmk}

We have defined compactness in sitations of increasing complexity: ordinary categories, model categories and $\infty$-categories. We have also related the definitions of compactness that occur in each of these. Now it remains to relate the definitions of thick subcategories. To avoid unnecessary generality,  we specialize to certain $\infty$-categories we will actually use. The the following will be the most important example of thick subcategories: thick subcategories of a category of modules.  We will be considering these thick subcategories in both the case of model categories and $\infty$-categories, so we will be careful to distinguish between these two cases in notation. 

\begin{conv}
In order to distinguish between model categories of modules and $\infty$-categories of modules, we will decorate the former with a superscript $M$ (for model),  $\operatorname{LMod}^M_A$ and we will omit decoration for the latter, $\operatorname{LMod}_A$. Note that $\operatorname{LMod}^M_A$ and $\operatorname{LMod}_A$ have the same homotopy theory. 
\end{conv}

\begin{example}
Let $\mc{D}_A$ be the derived category of the category of $A$-modules (again, we are working with EKMM spectra). Let $T_A$ be the thick subcategory generated by $A$ (this is the same as the compact $A$-modules). 

The thick subcategory of $\operatorname{LMod}^M_A$ generated by $A$ will be called the \textbf{category of perfect $A$-modules} and denoted $\operatorname{LMod}_A^{\circ,\operatorname{perf}}$. The symbol ``$\circ$'' means fibrant-cofibrant elements of the model category. Recall that $\operatorname{LMod}^{\circ, \operatorname{perf}}_A$ is the homtopy theoretic essential image of the inclusion  $T_A \subset \operatorname{LMod}_A$. 
\end{example}

\begin{example}
Let $\operatorname{LMod}_A$ be the $\infty$-category of $A$-modules, where $A$ is a ring spectrum, so $\operatorname{LMod}_A$ is stable.  The category $\operatorname{LMod}^{\text{perf}}_A$ is the thick subcategory generated by $A$. Note that this is the full subcategory determined by the homotopy category $\mc{T}_{\operatorname{LMod}^M_A} (A)$, i.e. the full subcategory determined by the corresponding thick subcategory in the homotopy category. 
\end{example}

\begin{example}
Let $\mc{T}^M_A (S)$ be the thick subcategory of $\operatorname{LMod}^{M,\circ}_A$ generated by $S$. Let $\mc{T}_A (S)$ be the thick subcategory of $\operatorname{LMod}_A$ generated by $S$. 
\end{example}

Having collected our main examples, we can study the relationship between them.

\begin{prop}
$\mbf{N}(\operatorname{LMod}^{M, \circ,\operatorname{perf}}_A) \simeq \operatorname{LMod}^{\operatorname{perf}}_A$, that is $\mbf{N}(\operatorname{LMod}^{M,\circ, \operatorname{perf}}_A)$ and $\operatorname{LMod}^{\operatorname{perf}}_A$ are categorically equivalent. 
\end{prop}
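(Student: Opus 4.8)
The plan is to reduce this to the fact, recorded in the conventions, that $\operatorname{LMod}^M_A$ and $\operatorname{LMod}_A$ present the same homotopy theory, together with the observation that the two ``perfect'' subcategories are cut out by the \emph{same} thick subcategory of the common homotopy category. So the first step is to fix, via any of the localization procedures of Convention \ref{coh_nerve}, an equivalence of stable $\infty$-categories
\[
\Phi \colon \mbf{N}(\operatorname{LMod}^{M,\circ}_A) \weakequiv \operatorname{LMod}_A ,
\]
where on the left $\mbf{N}$ denotes the coherent nerve of the simplicial category of fibrant-cofibrant $A$-modules. This $\Phi$ is exact, so it induces an equivalence of triangulated homotopy categories $\operatorname{Ho}(\operatorname{LMod}^M_A) \simeq \operatorname{Ho}(\operatorname{LMod}_A)$, and under this equivalence the free rank-one module $A$ corresponds to $A$.

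Next I would identify both sides of the asserted equivalence with full subcategories picked out by conditions on homotopy classes. On the model-categorical side, $\operatorname{LMod}^{M,\circ,\operatorname{perf}}_A$ is by definition the homotopy-theoretic essential image (restricted to fibrant-cofibrant objects) of the thick subcategory $T_A \subset \operatorname{Ho}(\operatorname{LMod}^M_A)$ generated by $A$; since the coherent nerve carries a full simplicial subcategory to the corresponding full sub-quasicategory, $\mbf{N}(\operatorname{LMod}^{M,\circ,\operatorname{perf}}_A)$ is exactly the full subcategory of $\mbf{N}(\operatorname{LMod}^{M,\circ}_A)$ spanned by the objects whose homotopy class lies in $T_A$. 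On the $\infty$-categorical side, $\operatorname{LMod}^{\operatorname{perf}}_A$ is, as noted in the examples above, the full subcategory of $\operatorname{LMod}_A$ spanned by the objects whose image in $\operatorname{Ho}(\operatorname{LMod}_A)$ lies in the thick subcategory generated by $A$.

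It then suffices to check that $\Phi$ matches these full subcategories, and for this I would use that an exact equivalence of stable $\infty$-categories carries the thick subcategory generated by an object $X$ isomorphically onto the thick subcategory generated by $\Phi(X)$ --- immediate, since such a functor preserves (co)fibers and retracts, hence the closure operation defining ``the thick subcategory generated by.'' Applied to the induced equivalence on homotopy categories with $X = A$, this shows that $T_A$ and the thick subcategory of $\operatorname{Ho}(\operatorname{LMod}_A)$ generated by $A$ correspond under $\Phi$. Since the two full sub-quasicategories in question are precisely those spanned by the objects lying in these corresponding thick subcategories, $\Phi$ restricts to an equivalence between them, which is the claimed $\mbf{N}(\operatorname{LMod}^{M,\circ,\operatorname{perf}}_A) \simeq \operatorname{LMod}^{\operatorname{perf}}_A$.

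I expect the genuine content here to be minimal and the difficulty to be entirely bookkeeping: one must be sure the chosen model of $\Phi$ is an equivalence of \emph{stable} $\infty$-categories compatibly with the triangulated structures on homotopy categories (so that ``thick'' transports along it), and one must make precise the passage between the point-set description of $\operatorname{LMod}^{M,\circ,\operatorname{perf}}_A$ (fibrant-cofibrant replacement, homotopy-theoretic essential image) and its homotopy-category description. This last point is exactly where the comparison of compactness notions (Theorem \ref{compact_comparison}) and the proposition relating thick subcategories of a stable model category to thick subcategories of its homotopy category are invoked.
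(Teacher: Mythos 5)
Your argument is correct and is essentially the paper's own proof: both use Lurie's equivalence $\mbf{N}(\operatorname{LMod}^{M,\circ}_A) \simeq \operatorname{LMod}_A$ and then identify each perfect subcategory as the full subcategory spanned by objects whose homotopy class lies in the thick subcategory $T_A$ of the common homotopy category (the paper phrases this via a pullback square along the projection to the homotopy category, you phrase it via transport of thick subcategories under an exact equivalence). The extra bookkeeping you spell out is exactly what the paper leaves implicit, so there is no substantive difference.
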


\begin{proof}
We know that $\mbf{N}(\operatorname{LMod}^{M,\circ}_A) \simeq \operatorname{LMod}_A$ \cite[4.3.3]{LurieHA}. That, is there is a map $\mbf{N}(\operatorname{LMod}_A^{M, \circ}) \to \operatorname{LMod}_A$ that is fully faithful and essentially surjective. By definition, the subcategory of perfect objects of $\operatorname{LMod}_A^{M,\circ}$ is the homotopy-theoretic image of the inclusion $T_A  \subset \operatorname{LMod}_A^{M, \circ}$. Thus, the full subcategory of perfect $A$-modules is identified as the pullback
\[
\xymatrix{
\operatorname{LMod}^{\text{perf}}_A \ar[d]\ar[r] & \operatorname{LMod}_A \simeq \mbf{N}(\operatorname{LMod}^{M,\circ})\ar[d]^{\text{Ho}}\\
\mbf{N}(T_A) \ar@{^{(}->}[r] & \mbf{N}(\mc{D}_A).
}
\]
It is thus clear that $\operatorname{LMod}^{\text{perf}}_A$ is categorically equivalent to $\mbf{N}(\operatorname{LMod}^{M,\circ,perf}_A)$. 
\end{proof}

We also have the following

\begin{prop}\label{thick_equiv}
For $A$ a ring spectrum $\mbf{N}(\mc{T}^M_A (S)) \simeq \mc{T}_A (S)$. 
\end{prop}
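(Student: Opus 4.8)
The plan is to identify both sides as full subcategories of the equivalent ambient $\infty$-categories $\mathbf{N}(\operatorname{LMod}^{M,\circ}_A) \simeq \operatorname{LMod}_A$, and then check that the generating object $S$ corresponds to itself under this equivalence and that ``thick subcategory generated by $S$'' is a notion transported by a categorical equivalence. First I would invoke the equivalence $\mathbf{N}(\operatorname{LMod}^{M,\circ}_A) \simeq \operatorname{LMod}_A$ of \cite[4.3.3]{LurieHA}, exactly as in the preceding proposition; call it $\Phi$. Since $S$ is (the homotopy class of) an $A$-module in both presentations via the augmentation, and since $\Phi$ is the identity on homotopy categories, $\Phi$ sends the object $S \in \mathbf{N}(\operatorname{LMod}^{M,\circ}_A)$ to the object $S \in \operatorname{LMod}_A$.

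Next I would argue that a categorical equivalence of stable $\infty$-categories carries the thick subcategory generated by an object $O$ to the thick subcategory generated by the image of $O$. This is because a thick subcategory is defined purely in terms of stability-theoretic data (fibers, cofibers, zero object) and retracts, all of which are preserved and reflected by an equivalence; hence $\Phi$ restricts to an equivalence between the smallest thick subcategory of $\mathbf{N}(\operatorname{LMod}^{M,\circ}_A)$ containing $S$ and the smallest thick subcategory of $\operatorname{LMod}_A$ containing $\Phi(S) = S$. The first of these is, by definition of $\mathcal{T}^M_A(S)$ together with the Proposition relating thick subcategories of a stable model category with thick subcategories of its homotopy category (the ``homotopy-theoretic essential image'' discussion), categorically equivalent to $\mathbf{N}(\mathcal{T}^M_A(S))$; the second is $\mathcal{T}_A(S)$ by definition. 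Composing these identifications gives $\mathbf{N}(\mathcal{T}^M_A(S)) \simeq \mathcal{T}_A(S)$.

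One subtle point to address carefully is the compatibility of ``thick subcategory'' taken in the model-categorical sense (Definition \ref{triang_model}, closure under weak equivalences, homotopy fibers/cofibers, retracts) with the $\infty$-categorical sense: I would use the Proposition comparing thick subcategories of $\mathbf{M}$ with thick subcategories of $\operatorname{Ho}(\mathbf{M})$ to reduce the model-side statement to a statement about $\operatorname{Ho}(\operatorname{LMod}^M_A)$, and then observe that $\operatorname{Ho}(\mathbf{N}(\operatorname{LMod}^{M,\circ}_A)) = \operatorname{Ho}(\operatorname{LMod}_A)$ carries the two generated thick subcategories to the same thick subcategory of the common homotopy category (the one generated by $S$). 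The full subcategory of $\operatorname{LMod}_A$ spanned by this thick subcategory of the homotopy category is $\mathcal{T}_A(S)$ by the convention used in the ``$\operatorname{LMod}^{\operatorname{perf}}_A$'' example, and pulling back along $\Phi$ gives $\mathbf{N}(\mathcal{T}^M_A(S))$, again as in the proof of the previous proposition (one can even write the same pullback square with $T_A$ replaced by the thick subcategory generated by $S$).

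The main obstacle I anticipate is purely bookkeeping: making sure that $S$, built via the augmentation $A \to S$, genuinely represents the same object on both the point-set and $\infty$-categorical sides — i.e. that a cofibrant-fibrant model of $S$ as an $A$-module is what the equivalence $\Phi$ picks out — and that no subtlety with idempotent completion intervenes (here it does not, since we are only taking the thick subcategory, not its idempotent completion, so retracts are already included and nothing further needs to be added). Once the object is pinned down, the rest is a formal consequence of equivalences preserving all the structure used to define ``thick subcategory generated by.''
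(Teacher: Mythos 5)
Your proposal is correct and follows essentially the same route as the paper: the paper's proof of this proposition is simply "the same as above," i.e.\ repeat the preceding proposition's argument (the equivalence $\mbf{N}(\operatorname{LMod}^{M,\circ}_A) \simeq \operatorname{LMod}_A$ together with the pullback square identifying the full subcategory spanned by the corresponding thick subcategory of the common homotopy category) with the generator $A$ replaced by $S$, which is exactly what you do.
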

\begin{proof}
The proof is the same as above. 
\end{proof}

\subsection{Correspondences Between Module Categories}

A key point for us is that for Koszul dual algebras $A$ and $B$ there is a close relationship between their module categories. More specifically, working in derived categories, $\operatorname{Ext}_A (-,S)$ will map $S$ to $B$, and as this preserves colimits will take anything generated by $S$ to an analogous $B$-module. Simlarly, $\operatorname{Ext}_A (A,S) \simeq S$, so $\ext$ will take a compactly generated $A$-module to a corresponding $S$-module. 

The following is a key lemma and this is in some sense the core of what Koszul duality is. 

\begin{lem}\label{koszul}
The categories $\mc{T}^M_A (S)$ and  $\operatorname{LMod}^{M,\operatorname{perf}}_{B^{\text{op}}}$ are contravariantly equivalent as are $\mc{T}^M_{B^{\text{op}}} (S)$ and $\operatorname{LMod}^{M,\operatorname{perf}}_A$. These equivalences are induced by contravariant adjunctions: 
\[
\ext_A( -, S) : \operatorname{LMod}^M_A \leftrightarrows\operatorname{LMod}^M_{B^{\text{op}}} : \ext_{B^{\text{op}}}(-,S)
\]
\end{lem}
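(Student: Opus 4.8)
The plan is to produce the contravariant adjunction and then identify what it does to the generators, using dc-completeness to get that the restricted functors are mutually inverse equivalences. First I would construct the adjunction $\ext_A(-,S) : \operatorname{LMod}^M_A \leftrightarrows \operatorname{LMod}^M_{B^{\mathrm{op}}} : \ext_{B^{\mathrm{op}}}(-,S)$. The commuting-module-structures condition (1) in the definition of Koszul duality is precisely what is needed here: a map $A \sma B \to S$ extending both augmentations makes $S$ into an $(A,B)$-bimodule, so for an $A$-module $M$ the spectrum $\operatorname{RHom}_A(M,S)$ inherits a right $B$-module (equivalently left $B^{\mathrm{op}}$-module) structure, and symmetrically on the other side. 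That these two functors are adjoint on the point-set level (after cofibrant replacement) is the standard tensor-hom style computation $\operatorname{RHom}_{B^{\mathrm{op}}}\!\big(N, \operatorname{RHom}_A(M,S)\big) \simeq \operatorname{RHom}_A\!\big(M, \operatorname{RHom}_{B^{\mathrm{op}}}(N,S)\big)$, both sides computing maps out of a suitable derived smash product into $S$; I would phrase this via the bimodule $S$ and invoke the usual adjunction between $-\sma^{\mathbf L}_A S$ and $\operatorname{RHom}$, being careful to work with fibrant-cofibrant objects so the derived functors are modelled correctly.

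Next I would compute the functors on generators. On the nose, $\ext_A(A,S) \simeq \operatorname{RHom}_A(A,S) \simeq S$ as a $B^{\mathrm{op}}$-module (the $B$-action is the one coming from the bimodule $S$), and $\ext_A(S,S) \simeq \operatorname{REnd}_A(S,S) \simeq B$ — the latter is the content of Koszul duality: condition (2), dc-completeness of $A \to S$, together with the commuting structures, gives the equivalence $B \simeq \operatorname{REnd}_A(S,S)$ (this is the "rough" characterization $\ext_A(S,S)\simeq B$ recorded in the earlier remark, and I would cite \cite{DGI} for the precise statement). Symmetrically $\ext_{B^{\mathrm{op}}}(B,S) \simeq S$ and $\ext_{B^{\mathrm{op}}}(S,S) \simeq A$, the last equivalence again being exactly dc-completeness applied in the other direction (which holds because Koszul duality is a symmetric relation — the double-centralizer map for $B\to S$ is an equivalence whenever it is for $A\to S$, under the commuting hypothesis).

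Now I would run the standard "generators and colimits" argument. The functor $\ext_A(-,S)$ is exact (it is a composite of derived Hom into the bimodule $S$, so it sends cofiber sequences to fiber sequences, and the target is stable so these are cofiber sequences again) and sends retracts to retracts; hence it carries the thick subcategory generated by $S$ into the thick subcategory of $\operatorname{LMod}^M_{B^{\mathrm{op}}}$ generated by $\ext_A(S,S)\simeq B$, which is exactly $\operatorname{LMod}^{M,\operatorname{perf}}_{B^{\mathrm{op}}}$ by definition. Conversely $\ext_{B^{\mathrm{op}}}(-,S)$ carries $\operatorname{LMod}^{M,\operatorname{perf}}_{B^{\mathrm{op}}}$ — generated by $B$, which goes to $S$ — into the thick subcategory of $\operatorname{LMod}^M_A$ generated by $S$, i.e. $\mc{T}^M_A(S)$. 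Finally, on the generator $S$ of $\mc{T}^M_A(S)$ the composite $\ext_{B^{\mathrm{op}}}(\ext_A(-,S),S)$ evaluates to $\ext_{B^{\mathrm{op}}}(B,S)\simeq S$, and I must check the unit of the adjunction realizes this identification; by exactness and compatibility with retracts the full subcategory of objects on which the unit is an equivalence is thick, hence is all of $\mc{T}^M_A(S)$. Symmetrically for the counit on $\operatorname{LMod}^{M,\operatorname{perf}}_{B^{\mathrm{op}}}$, and likewise for the pair $\mc{T}^M_{B^{\mathrm{op}}}(S)$ and $\operatorname{LMod}^{M,\operatorname{perf}}_A$. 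This yields the asserted contravariant equivalences.

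The main obstacle I anticipate is not the formal thick-subcategory bookkeeping but pinning down the two centralizer equivalences $\ext_A(S,S)\simeq B$ and $\ext_{B^{\mathrm{op}}}(S,S)\simeq A$ \emph{as module maps compatible with the adjunction unit/counit}, rather than as mere abstract equivalences of spectra — this is exactly the reason the definition of Koszul duality was set up with the explicit commuting diagram and the double-centralizer map, and I would lean on \cite{DGI} (and the EKMM point-set framework, where all objects are fibrant) to make the bimodule structures and the adjunction maps strictly compatible. A secondary technical point is ensuring the point-set model: one must replace $A$, $B$, and $S$ by suitably cofibrant bimodules so that $\ext$ computes the derived functor and the adjunction is honest, which is routine in EKMM but worth stating.
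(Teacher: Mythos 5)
Your proposal is correct and is essentially the paper's intended argument: the paper's proof simply defers to \cite{BlumbergMandellKoszul}, whose proof is exactly this contravariant $\ext(-,S)$-adjunction over the bimodule $S$, evaluated on the generators and propagated through thick subcategories via exactness and the unit/counit maps (the paper later also sketches an alternative, Morita-theoretic route via Schwede--Shipley for the $\infty$-categorical version \ref{main_koszul}). The only small caveat is bookkeeping of hypotheses: the equivalence $B \simeq \operatorname{REnd}_A(S,S)$ is not literally a consequence of conditions (1) and (2) as stated but is part of what ``Koszul dual'' is taken to mean (the paper uses it ``by assumption''), while dc-completeness is what gives $A \simeq \operatorname{REnd}_{B}(S,S)$ and hence the unit being an equivalence on the generator $A$.
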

\begin{proof}
This follows exactly as in \cite{BlumbergMandellKoszul}. 

\end{proof}

\begin{rmk}
Replacing $A$ by $A^{\text{op}}$ and $B$ by $B^{\text{op}}$ we get the exact same relationship between $\operatorname{RMod}^{\operatorname{perf}}_B$ and $\mc{T}^R_A (S)$. This will be useful for more cleanly phrasing our theorems later. 
\end{rmk}

The comparison statements in the previous section allow us to immediately translate the above result into $\infty$-category language.

\begin{thm}\label{main_koszul}
The $\infty$-categories $\mc{T}_A (S)$ and $\operatorname{LMod}^{\operatorname{perf}}_{B^{\text{op}}}$ are categorically equivalent, as are $\mc{T}_{B^{\text{op}}} (S)$ and $\operatorname{LMod}^{\operatorname{perf}}_A$. 
\end{thm}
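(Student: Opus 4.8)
The plan is to transport the model-categorical statement of Lemma~\ref{koszul} across the comparison results of the previous subsection; the genuinely homotopy-theoretic work has already been carried out there, so what remains is largely bookkeeping. Recall that Lemma~\ref{koszul} supplies a contravariant equivalence between the relative category $\mc{T}^M_A(S)$ and the relative category $\operatorname{LMod}^{M,\operatorname{perf}}_{B^{\text{op}}}$, implemented by the contravariant adjunction $\bigl(\ext_A(-,S),\ext_{B^{\text{op}}}(-,S)\bigr)$. First I would note that, on these subcategories, both functors are homotopical and carry one subcategory into the other --- this is exactly the content imported from \cite{BlumbergMandellKoszul}, the essential inputs being $\ext_A(A,S)\simeq S$ and $\ext_A(S,S)\simeq B$, so that generators go to generators and (compact) equivalences are preserved.

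Next I would pass to quasicategories. Applying the localization $\mbf{M}\mapsto\mbf{N}(\mbf{M})[W^{-1}]$ to a contravariant equivalence of relative categories yields an equivalence of the associated quasicategories after taking opposites, using the formal identity $\mbf{N}(\mbf{M}^{\text{op}})[W^{-1}]\simeq\bigl(\mbf{N}(\mbf{M})[W^{-1}]\bigr)^{\text{op}}$. Hence the contravariant equivalence of Lemma~\ref{koszul} descends to an equivalence relating $\mbf{N}(\mc{T}^M_A(S))[W^{-1}]$ and $\mbf{N}(\operatorname{LMod}^{M,\circ,\operatorname{perf}}_{B^{\text{op}}})[W^{-1}]$ (up to the appropriate opposite).

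It then remains only to identify these quasicategories with those in the statement. Proposition~\ref{thick_equiv} gives $\mbf{N}(\mc{T}^M_A(S))[W^{-1}]\simeq\mc{T}_A(S)$, and the proposition identifying $\mbf{N}(\operatorname{LMod}^{M,\circ,\operatorname{perf}}_A)$ with $\operatorname{LMod}^{\operatorname{perf}}_A$ gives $\mbf{N}(\operatorname{LMod}^{M,\circ,\operatorname{perf}}_{B^{\text{op}}})[W^{-1}]\simeq\operatorname{LMod}^{\operatorname{perf}}_{B^{\text{op}}}$. Plugging these into the previous step produces the claimed equivalence between $\mc{T}_A(S)$ and $\operatorname{LMod}^{\operatorname{perf}}_{B^{\text{op}}}$. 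The second equivalence, between $\mc{T}_{B^{\text{op}}}(S)$ and $\operatorname{LMod}^{\operatorname{perf}}_A$, is obtained by rerunning the argument with $A$ and $B^{\text{op}}$ exchanged, as licensed by the Remark following Lemma~\ref{koszul}.

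The step I expect to need the most care is the variance-and-opposites bookkeeping together with the fibrant--cofibrant ($\circ$) decorations: one must verify that the adjunction genuinely restricts to the subcategories $\mc{T}^M_A(S)$ and $\operatorname{LMod}^{M,\operatorname{perf}}_{B^{\text{op}}}$ rather than merely to the ambient module categories, that passing to fibrant--cofibrant objects does not disturb the equivalence, and that the opposite appearing in $B^{\text{op}}$ is tracked consistently with the opposite coming from the contravariance of $\ext_A(-,S)$, so that the equivalence produced is the one in the statement. None of this is deep, but it is the place an error would most plausibly slip in; everything substantive is already contained in Lemma~\ref{koszul} and in Theorem~\ref{compact_comparison}.
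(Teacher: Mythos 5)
Your proposal matches the paper's own argument: the paper likewise deduces Theorem~\ref{main_koszul} by translating Lemma~\ref{koszul} across the model-category/$\infty$-category comparison statements (Proposition~\ref{thick_equiv} and the identification of $\mbf{N}(\operatorname{LMod}^{M,\circ,\operatorname{perf}}_A)$ with $\operatorname{LMod}^{\operatorname{perf}}_A$), with the substantive input imported from \cite{BlumbergMandellKoszul}. Your extra attention to the variance and fibrant--cofibrant bookkeeping is exactly the (implicit) content of the paper's one-line proof, so the two arguments are essentially identical.
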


\begin{rmk}
Below we offer an alternative path to this statement, hence we do not emphasize the proofs of \ref{koszul} and \ref{main_koszul}. 
\end{rmk}

There is another way to see the equivalence, which will make the presence of ``op''s more apparent. We can use Schwede-Shipley's theory classifying stable categories \cite{schwede_shipley} as reformulated by Lurie \cite{LurieHA}. Before we do so, we need a definition. 

\begin{defn}
The $\infty$-category $\mc{L}_A (S)$ is the localizing subcategory generated by $S$ in $\operatorname{LMod}_A$. 
\end{defn}

The category $\mc{T}_A (S)$ will be stable a $\infty$-category and is the subcategory of compact objects of the localizing subcategory $\mc{L}_A(S)$ generated by $S$.   Furthermore, $\mc{L}_A (S)$ is presentable and is generated by $S$.  Thus, $\mc{L}_A (S)$ is equivalent to $\operatorname{RMod}_R$ for some ring spectrum \cite[Thm. 8.1.2.1]{LurieHA}, and this ring spectrum can be idenified with the endomorphism objects of the generators, i.e. $\operatorname{End}_A (S,S)$ --- but this is precisely $B$ by assumption. We now have

\begin{lem}
$\mc{L}_A (S)^{\operatorname{perf}} \simeq \mc{T}_A (S)$
\end{lem}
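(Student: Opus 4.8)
The plan is to obtain the lemma by unwinding the identification $\mc{L}_A(S)\simeq\operatorname{RMod}_B$ recorded just above and matching the perfect (equivalently, compact) objects on the two sides. That identification carries a little more than a bare equivalence: $\mc{L}_A(S)$ is a presentable stable $\infty$-category, it is generated under colimits by the single object $S$, and the Schwede--Shipley equivalence of \cite[Thm.~8.1.2.1]{LurieHA} is implemented by the colimit-preserving functor $-\otimes_B S\colon\operatorname{RMod}_B\to\operatorname{LMod}_A$, whose essential image is exactly $\mc{L}_A(S)$ and which sends the free rank-one module $B$ to $S$. In particular $S$ is a \emph{compact} generator of $\mc{L}_A(S)$: although $S$ need not be a compact object of the ambient $\operatorname{LMod}_A$, under the equivalence it corresponds to the unit $B\in\operatorname{RMod}_B$, which is compact, and compactness inside $\mc{L}_A(S)$ is tested by filtered colimits, which $\mc{L}_A(S)$ inherits from $\operatorname{LMod}_A$.

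Granting that, the next step is the standard description of the compact objects of a compactly generated stable $\infty$-category $\mc{D}$ with a single compact generator $G$: one has $\mc{D}\simeq\operatorname{Ind}(\mc{D}^\omega)$, and the full subcategory $\mc{D}^\omega$ of compact objects is the smallest full subcategory containing $G$ that is closed under finite colimits and retracts, i.e.\ the thick subcategory of $\mc{D}$ generated by $G$ (the $\infty$-categorical form of the Neeman/Thomason description; see \cite{LurieHA}). Applied to $\mc{D}=\mc{L}_A(S)$ with $G=S$, this identifies $\mc{L}_A(S)^{\operatorname{perf}}$ with the thick subcategory of $\mc{L}_A(S)$ generated by $S$; note this subcategory is automatically idempotent complete, since thick subcategories are closed under retracts.

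It then remains to check that forming the thick subcategory generated by $S$ inside $\mc{L}_A(S)$ gives the same full subcategory as forming it inside $\operatorname{LMod}_A$, which by definition is $\mc{T}_A(S)$. This is immediate: $\mc{L}_A(S)$ is a full stable subcategory of $\operatorname{LMod}_A$ that is closed under fibers, cofibers and retracts (being localizing), so the two thick closures of $\{S\}$ coincide. Hence $\mc{L}_A(S)^{\operatorname{perf}}\simeq\mc{T}_A(S)$. Restricting $-\otimes_B S$ to compact objects moreover exhibits $\mc{T}_A(S)\simeq\operatorname{RMod}_B^{\operatorname{perf}}=\operatorname{LMod}^{\operatorname{perf}}_{B^{\operatorname{op}}}$, which is exactly Theorem~\ref{main_koszul} recovered in a form where the appearance of the opposite ring is transparent, as promised in the remark following Lemma~\ref{koszul}.

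The only step that I expect to require genuine care is the bookkeeping at the start: one must be sure that the Schwede--Shipley equivalence really does send $S$ to a compact generator of $\mc{L}_A(S)$ --- equivalently, that $-\otimes_B S$ is fully faithful, which comes down to the derived unit $B\to\operatorname{End}_A(S)$ being an equivalence, a consequence of the Koszul-duality hypothesis $\operatorname{End}_A(S,S)\simeq B$ --- so that the Neeman-type description is applicable and ``the thick subcategory generated by $S$'' is unambiguous. Everything after that is formal.
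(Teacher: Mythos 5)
Your formal skeleton is the paper's intended route: grant that $S$ is a compact generator of $\mc{L}_A(S)$ (equivalently, accept the identification $\mc{L}_A(S)\simeq \operatorname{RMod}_B$ of \cite[Thm. 8.1.2.1]{LurieHA} invoked just before the lemma), identify the compact objects of a compactly generated stable $\infty$-category with the thick closure of a compact generator, and observe that the thick closure of $S$ is the same whether formed in $\mc{L}_A(S)$ or in $\operatorname{LMod}_A$. Those last two steps are fine, and they are exactly what the paper treats as ``clear.''

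The problem is the one step you yourself flag as needing genuine care: your justification of it is circular. Full faithfulness of $-\otimes_B S$ means the unit $M \to \operatorname{Map}_A(S, M\otimes_B S)$ is an equivalence for \emph{every} $M\in\operatorname{RMod}_B$, and passing from the single case $M=B$ (which is just the definition of $B$ as $\operatorname{End}_A(S,S)$) to arbitrary $M$ requires $\operatorname{Map}_A(S,-)$ to commute with the relevant colimits --- i.e.\ precisely the compactness of $S$ in $\mc{L}_A(S)$ that you are trying to establish. The Koszul-duality hypothesis does not supply this, even with dc-completeness in both directions. A standard example (legitimate here, since the paper allows any ground ring in place of $S$): $A=k[x]/(x^2)$ with $B=\operatorname{REnd}_A(k,k)\simeq k[y]$ are Koszul dual and $A$ is small over $k$, yet $k[y^{\pm 1}]\otimes^{\mbf{L}}_{k[y]}k\simeq 0$, so $-\otimes_B k$ is not fully faithful; correspondingly $\mc{L}_A(k)$ is all of $\mc{D}(A)$, whose compact objects are the perfect complexes, strictly smaller than $\mc{T}_A(k)$ (which contains the non-compact object $k$ itself). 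Topologically this is the $X=S^1$ phenomenon, i.e.\ failure of Eilenberg--Moore convergence without simple connectivity. So compact generation of $\mc{L}_A(S)$ by $S$ is genuine extra input --- in the motivating examples it comes from connectivity/finiteness arguments as in \cite{DGI} and \cite{BlumbergMandellKoszul}, not from $\operatorname{End}_A(S,S)\simeq B$ alone --- and your proof (like the paper's terse one) is only complete once that input is assumed or proved; after that, the rest of your argument is indeed formal.
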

\begin{proof}
This is clear. 
\end{proof}

The above lemma tells us that upon taking compact objects on both sides of $\mc{L}_A (S) \simeq \operatorname{RMod}_R$ we get that
\[
\mc{T}_A (S) \simeq \operatorname{RMod}^{\operatorname{perf}}_{B} \simeq \operatorname{LMod}^{\operatorname{perf}}_{B^{\text{op}}}
\]

Thus, we have a slightly more conceptual way of seeing \ref{main_koszul} as well. 

\begin{rmk}
Lurie has also given a definition of Koszul duality \cite{LurieDAGX}. Now that we have set up the necessary machinery, we can observe that his definition can be seen to make contact with this one fairly easily. In \cite[Nota. 3.1.11]{LurieDAGX}, given $A$ and $B$ augmented $S$-algebras and a pairing $A \sma B \to S$ (as above) he defines duality functors 
\begin{align*}
\mf{D} : (\operatorname{LMod}_A)^{\text{op}} &\to \operatorname{LMod}_B\\
\mf{D}': (\operatorname{LMod}_B)^{\text{op}} & \to \operatorname{LMod}_A
\end{align*}
which are represented by $\operatorname{End}_A ( - , S)$ and $\operatorname{End}_B(-, S)$ respectively. These are exactly our functors above. In \cite[Rmk. 3.1.12]{LurieDAGX} he notes that $A$ and $B$ are Koszul dual if and only if $\operatorname{End}_A(S, S) \simeq B$ and $\operatorname{End}_B (S, S) \simeq A$. 
\end{rmk}

Having established some relationship between certain categories of compact modules, one may naturally wonder what kind of compactness conditions $A$ and $B$ themselves must satisfy. The following proposition, which appears in slightly different form in \cite{DGI}, answers this question.

\begin{prop}\cite[Pr.4.17]{DGI}
Let $A \to S$ be an augmentation and $\operatorname{REnd}_R (S, S) \to S$ be an augmentation as well. Then $R \in \mc{T}_R (S)$ if and only if $S \in \mc{T}_S (\operatorname{REnd}_R (S, S))$. 
\end{prop}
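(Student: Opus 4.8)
The plan is to prove both implications by a single device: an exact contravariant functor between stable $\infty$-categories which carries a chosen generator to a chosen generator must send the thick subcategory generated by the first into the thick subcategory generated by the second. Write $B := \operatorname{REnd}_R(S,S)$. First I would record that, since the $R$- and $B$-module structures on $S$ commute (the content of a pairing $R \sma B \to S$ extending both augmentations, exactly as in the definition of Koszul duality), $S$ is an $(R,B)$-bimodule; hence $\operatorname{RHom}_R(-,S)$ is an exact contravariant functor $\operatorname{LMod}_R \to \operatorname{LMod}_B$ with $\operatorname{RHom}_R(R,S)\simeq S$ and $\operatorname{RHom}_R(S,S)\simeq B$, and symmetrically $\operatorname{RHom}_B(-,S)\colon \operatorname{LMod}_B \to \operatorname{LMod}_{\operatorname{REnd}_B(S,S)}$. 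The double centralizer furnishes a map $R \to \operatorname{REnd}_B(S,S)$, an equivalence when $R$ is dc-complete, which I take to be in force here (it is the standing Koszul-duality hypothesis of this section, and is in any case matched by the identification $\mc{T}_R(S)\simeq \operatorname{LMod}^{\operatorname{perf}}_{B^{\text{op}}}$ obtained in the previous subsection).

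For the forward implication, assume $R \in \mc{T}_R(S)$. The full subcategory $\{\,M \in \operatorname{LMod}_R : \operatorname{RHom}_R(M,S) \in \mc{T}_S(B)\,\}$ is thick, because $\operatorname{RHom}_R(-,S)$ sends (co)fiber sequences to (co)fiber sequences and retracts to retracts while $\mc{T}_S(B)$ is closed under these; and it contains $S$, since $\operatorname{RHom}_R(S,S)\simeq B \in \mc{T}_S(B)$. Therefore it contains $\mc{T}_R(S)$, in particular $R$, so $S \simeq \operatorname{RHom}_R(R,S) \in \mc{T}_S(B)$.

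For the backward implication the skeleton is identical with $R$ and $B$ interchanged: once we know $S$ lies in $\operatorname{LMod}^{\operatorname{perf}}_B$ (the thick subcategory of $\operatorname{LMod}_B$ generated by $B$), the thick subcategory $\{\,N \in \operatorname{LMod}_B : \operatorname{RHom}_B(N,S) \in \mc{T}_R(S)\,\}$ contains $B$, hence contains $S$, whence $R \simeq \operatorname{REnd}_B(S,S)=\operatorname{RHom}_B(S,S) \in \mc{T}_R(S)$, using dc-completeness in the final identification. The genuine work is thus to deduce the \emph{module-level} statement $S \in \operatorname{LMod}^{\operatorname{perf}}_B$ from the \emph{spectrum-level} hypothesis $S \in \mc{T}_S(B)$ — equivalently, to see that the forgetful functor $\operatorname{LMod}_B \to \Sp$ detects membership in $\operatorname{LMod}^{\operatorname{perf}}_B$ for the augmentation module $S$. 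This is false for a general $B$-module, so the argument must use the identity $B = \operatorname{REnd}_R(S,S)$; I would handle it via the cellularization adjunction $S \sma_B(-)\colon \operatorname{LMod}_B \rightleftarrows \operatorname{LMod}_R\colon \operatorname{RHom}_R(S,-)$ of \cite{DGI} — a refinement of the equivalence $\mc{L}_R(S)\simeq \operatorname{RMod}_B$ of the previous subsection that tracks cellular approximations — under which $\mc{T}_R(S)$ corresponds to $\operatorname{LMod}^{\operatorname{perf}}_{B^{\text{op}}}$, turning the claim into a comparison of perfect objects on the two sides (carried out in \cite[Pr.4.17]{DGI}). I expect precisely this passage — from a finite build of the spectrum $S$ out of the spectrum $B$ to a finite build of the $B$-module $S$ out of the free $B$-module — to be the only nonformal point, and the place where the proxy-smallness/cellularization machinery of \cite{DGI} is indispensable.
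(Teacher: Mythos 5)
The paper itself offers no argument for this proposition --- it is quoted from \cite{DGI} with no proof --- so there is nothing internal to compare against; judging your proposal on its own terms, the forward implication is correct and is the expected argument: $\operatorname{RHom}_R(-,S)$ is exact, takes $S$ to $B=\operatorname{REnd}_R(S,S)$ and $R$ to $S$, and the preimage of a thick subcategory under an exact functor is thick (for this half you do not even need the bimodule structure or the pairing, since one may land in $\Sp$).

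The backward implication is where there is a genuine gap, and it is not a deferrable detail. Your plan is to upgrade the hypothesis $S \in \mc{T}_S(B)$, a condition on underlying spectra, to the module-level statement $S \in \operatorname{LMod}^{\operatorname{perf}}_B$, and you propose to extract this from the cellularization machinery of \cite{DGI}, citing in particular \cite[Pr.4.17]{DGI} --- but that is the very statement under proof, so the appeal is circular; worse, the implication you need is false. Take $R = S[t]$, the free $\mbf{E}_1$-ring on a degree-zero class, augmented by $t \mapsto 0$: the cofiber sequence $R \xrightarrow{t} R \to S$ gives $B = \operatorname{REnd}_R(S,S) \simeq S \vee \Sigma^{-1}S$, so $S$ is a retract of $B$ in $\Sp$ and the right-hand condition holds in its literal spectrum-level reading, yet $R \notin \mc{T}_R(S)$ because its underlying spectrum is not finite. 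Your added standing hypothesis of dc-completeness (which is not part of the statement) does not repair this: in the dc-complete analogue over a field, $R = k[[x]]$ with $B \simeq \Lambda_k(\epsilon)$, the module $k$ is a retract of $B$ in $D(k)$ but is not perfect over $B$, and $R \notin \mc{T}_R(k)$. The moral is that the right-hand side must be read, as in \cite{DGI}, as smallness of $S$ \emph{as a $B$-module}, i.e.\ $S$ lies in the thick subcategory of $\operatorname{LMod}_B$ generated by $B$; with that reading no cellularization is needed at all, since the backward direction is your forward argument run in reverse: apply the exact functor $\operatorname{RHom}_B(-,S)$ to a finite build of $S$ from $B$ in $\operatorname{LMod}_B$, noting it sends $B \mapsto S$ and $S \mapsto \operatorname{REnd}_B(S,S) \simeq R$, the last equivalence being exactly where a double-centralizer hypothesis legitimately enters (and some such hypothesis is genuinely needed, as $R = k \times k$ shows). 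In short: the two concrete defects are the circular citation at your key intermediate step, and the fact that the intermediate step itself --- spectrum-level smallness of $S$ relative to $B$ implying $B$-module perfection of $S$ --- fails, which also indicates that the proposition as transcribed only becomes correct under the module-level reading.
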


\begin{rmk}
A translation of this is that if $R$ is ``finitely built'' from $S$ via fibers, cofibers and retracts, then $S$ must be ``finitely built'' from $\operatorname{REnd}_R (S, S)$ via fibers, cofibers and retracts. 

In particular, if $A$ and $B$ are Koszul dual, then $B \simeq \operatorname{REnd}_B(S, S)$. Thus, $A$ is finitely built from $S$ if and only if $S$ is finitely built from $B$. 
\end{rmk}

Eventually, the above will say that there must be some kind of asymmetry in the dualities we are considering. The dualities we consider have a close connection to subcategories of compact modules, and these will depend heavily on compactness conditions. In the end, the above will cause us to face that fact that our dualities in field theories must necessarily be asymmetric.

\section{Models for stable and symmetric monoidal $(\infty,1)$-categories}

We will have occasion to use a number of models of $(\infty,1)$-categories, symmetric monoidal $(\infty,1)$-categories and stable $(\infty,1)$-categories.  Below we will discuss a convenient model for stable $(\infty,1)$-categories, as well as how to generate symmetric monoidal $(\infty,1)$-categories from more rigid structures. 

\subsection{Some Results on Symmetric Monoidal Relative Categories}

Long ago it was realized by Dwyer and Kan \cite{DK1, DK2, DK3} that categories equipped with weak equivalences were enough to give a ``homotopy theory''. In \cite{BarwickKan} Barwick and Kan rechristen such categories as \textit{relative categories} and show that this provides another model for $(\infty,1)$-categories. Becuase this requires only one auxiliary notion (weak equivalences) and reduces $\infty$-categories to their essence, this will be a useful model for us. Furthermore, there is a rich supply of relative categories coming from forgetting the structure of fibrations and cofibrations in model categories. 

Below, we will be concerned with symmetric monoidal versions of this notion and we can effectively work with these by theorems of Lurie \cite[4.1.3.4]{LurieHA}. 

Before we go on we will give some definitions. 

\begin{defn}
A \textbf{special relative category} is a relative category $(\mbf{C},\mbf{W})$ such that
\begin{enumerate}
\item $\mbf{W}$ contains all objects and isomorphisms in $\mbf{C}$
\item $\mbf{W}$ is a \textit{subcategory} of $\mbf{C}$. 
\end{enumerate}
\end{defn}

\begin{rmk}
In general for a relative category, $\mbf{W}$ need not be a subcategory. 
\end{rmk}

\begin{defn}
A \textbf{symmetric monoidal special relative category} is a special relative category $(\mbf{C},\mbf{W})$ equipped with a symmetric monoidal product $\otimes :\mbf{C}\times \mbf{C} \to \mbf{C}$ such that if $C, D \in \mbf{C}$ and $D \to D'$ is a morphism in $\mbf{W}$ then $C \otimes D \to C \otimes D'$ is in $\mbf{W}$. 
\end{defn}

Now the pair $(\mbf{N}(\mbf{C}), \mbf{N}(W))$ satisfies the conditions of \cite[4.1.4.3]{LurieHA} which allows us to construct a symmetric monoidal $\infty$-category $\mbf{N}(\mbf{C})[\mbf{W}^{-1}]^{\otimes}$ with underlying $\infty$-category $\mbf{N}(\mbf{C})[\mbf{W}^{-1}]$, i.e. the underlying $\infty$-category is $\mbf{N}^{\text{coh}}(\mbf{C}, \mbf{W})$. 

We now have the following theorem which will allow us to lift functors that we define on relative categories to $\infty$-categories. 

\begin{thm}\label{sym_mon}
Suppose that $\mbf{C}^{\otimes}$ and $\mbf{D}^{\otimes}$ are symmetric monoidal special relative categories and that we have a symmetric monoidal functor $F : \mbf{C}^{\otimes} \to \mbf{D}^{\otimes}$ that preserves weak equivalences. Then, we have an induced symmetric monoidal functor on $\infty$-categories:
\[
\mbf{N} \mbf{C}[\mbf{W}^{-1}_{\mbf{C}}]^{\otimes} \to \mbf{N}\mbf{D}[\mbf{W}^{-1}_{\mbf{D}}]^{\otimes}
\]
\end{thm}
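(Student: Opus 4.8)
The plan is to reduce the statement to the functoriality of Lurie's localization construction $\mbf{N}(-)[\mbf{W}^{-1}]^{\otimes}$ as applied to symmetric monoidal simplicial sets (equivalently, to $\infty$-operads over $\mbf{N}(\operatorname{Fin}_\ast)$). First I would observe that a symmetric monoidal special relative category $\mbf{C}^{\otimes}$ gives rise to a pair $(\mbf{N}(\mbf{C}), \mbf{N}(\mbf{W}))$ satisfying the hypotheses of \cite[4.1.4.3]{LurieHA} (as already noted in the excerpt preceding the theorem), so that $\mbf{N}(\mbf{C})[\mbf{W}^{-1}]^{\otimes}$ is defined as a symmetric monoidal $\infty$-category; the same applies to $\mbf{D}^{\otimes}$. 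The key point is then to show that a weak-equivalence-preserving symmetric monoidal functor $F$ induces a map on these localizations that is itself symmetric monoidal, i.e.\ a map of $\infty$-operads over $\mbf{N}(\operatorname{Fin}_\ast)$ carrying cocartesian edges over inert morphisms to cocartesian edges.

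The main steps, in order, are: (i) Recall that $\mbf{N}(\mbf{C})[\mbf{W}^{-1}]^{\otimes}$ is constructed, via \cite[4.1.4.3]{LurieHA} together with \cite[4.1.3.4]{LurieHA}, as a fibrant replacement (in the appropriate model structure on marked/operadic simplicial sets over $\mbf{N}(\operatorname{Fin}_\ast)$) of the marked simplicial set obtained from $\mbf{N}(\mbf{C}^{\otimes})$ by marking the morphisms lying over identities whose underlying morphism is in $\mbf{W}$. (ii) Apply the nerve to $F$ to get a map of marked simplicial sets over $\mbf{N}(\operatorname{Fin}_\ast)$: here we use that $F$ is symmetric monoidal, so $\mbf{N}(F^{\otimes})$ is a map of the underlying coherent-structure simplicial sets over $\mbf{N}(\operatorname{Fin}_\ast)$, and that $F$ preserves weak equivalences, so it respects the markings. (iii) Use the universal property of fibrant replacement in this model structure (or equivalently \cite[4.1.3.4]{LurieHA}, the statement that the localization is characterized by a universal mapping property out of $\mbf{N}(\mbf{C}^{\otimes})$ into symmetric monoidal $\infty$-categories) to factor $\mbf{N}(F^{\otimes})$ through the localizations, producing $\mbf{N}\mbf{C}[\mbf{W}^{-1}_{\mbf{C}}]^{\otimes} \to \mbf{N}\mbf{D}[\mbf{W}^{-1}_{\mbf{D}}]^{\otimes}$. (iv) Check that the resulting functor is still a map of symmetric monoidal $\infty$-categories: this is automatic because fibrant replacement in the operadic model structure preserves the property of being a cocartesian fibration over $\mbf{N}(\operatorname{Fin}_\ast)$ with the structure maps as required, and the induced map is a map over $\mbf{N}(\operatorname{Fin}_\ast)$ by construction. (v) Finally, note that the underlying functor is $\mbf{N}^{\text{coh}}$ applied to $F$ on the level of the special relative categories, i.e.\ the functor $\mbf{N}\mbf{C}[\mbf{W}^{-1}_{\mbf{C}}] \to \mbf{N}\mbf{D}[\mbf{W}^{-1}_{\mbf{D}}]$ induced by $F$ via the ordinary localization, which recovers the expected underlying $\infty$-functor.

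The step I expect to be the main obstacle is (iii)--(iv): verifying that the induced map on fibrant replacements genuinely lands in \emph{symmetric monoidal} functors rather than merely lax symmetric monoidal or colax ones, i.e.\ that cocartesian edges over inert morphisms in $\mbf{N}(\operatorname{Fin}_\ast)$ are preserved. This comes down to unwinding the definition of ``symmetric monoidal functor'' for $\mbf{C}^{\otimes} \to \mbf{D}^{\otimes}$ at the level of special relative categories (it should mean that $F$ commutes with $\otimes$ strictly, or at least coherently up to natural weak equivalence) and checking that this strictness is enough to force the cocartesian-edge condition after localization. If $F$ is strictly symmetric monoidal this is essentially formal, since strict monoidal functors take the ``Segal'' cocartesian edges of $\mbf{N}(\mbf{C}^{\otimes})$ to those of $\mbf{N}(\mbf{D}^{\otimes})$ on the nose, and fibrant replacement then carries this through. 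The secondary subtlety is making sure the marking is handled correctly: one must confirm that $F$ preserving weak equivalences in the single-object (underlying-category) sense suffices to preserve the markings of $\mbf{N}(\mbf{C}^{\otimes})$, which include weak equivalences in each ``color'' simultaneously — this follows from the compatibility axiom in the definition of symmetric monoidal special relative category (that tensoring with a fixed object preserves $\mbf{W}$), applied componentwise.
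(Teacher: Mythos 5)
Your proposal is correct and follows essentially the same route as the paper: nerve the symmetric monoidal structure (the paper does this via the operadic nerve of the associated colored operad, \cite[2.0.0.1, 2.1.1.23]{LurieHA}) and then invoke the universal property of the localization \cite[4.1.3.4]{LurieHA} to factor the functor induced by $F$ through $\mbf{N}\mbf{C}[\mbf{W}^{-1}_{\mbf{C}}]^{\otimes}$. The step you flag as the main obstacle, (iii)--(iv), is handled automatically by that universal property, which already characterizes maps out of the localization as genuinely symmetric monoidal (not merely lax) functors inverting the marked edges.
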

\begin{proof}
Given a symmetric monoidal category, $\mbf{C}$, one can construct a colored operad $\mbf{C}^{\otimes}$ \cite[2.0.0.1]{LurieHA} and from there apply the operadic nerve \cite[2.1.1.23] to obtain a symmetric monoidal category $\mbf{N}^{\otimes}(\mbf{C})$. Furthermore, this procedure is functorial, so a symmetric monoidal functor $\mbf{C} \to \mbf{D}$ yields a symmetric monoidal functor of $\infty$-categories $\mbf{N}^{\otimes}(\mbf{C}) \to \mbf{N}^{\otimes}(\mbf{D})$. Now, from \cite[4.1.3.4]{LurieHA} we further have a diagram
\[
\xymatrix{
\mbf{N}^{\otimes}(\mbf{C})\ar[d]\ar[r] & \mbf{N}^{\otimes}(\mbf{D})\\
\mbf{N}\mbf{C}[\mbf{W}^{-1}]^{\otimes}\ar@{.>}[ur] & \\
}
\]
by the universal propery of $\mbf{N}\mbf{C}[\mbf{W}^{-1}_{\mbf{C}}]^{\otimes}$\cite[4.1.3.4 (1)]{LurieHA}. This diagram can be extended further to a diagram 
\[
\xymatrix{
\mbf{N}^{\otimes}(\mbf{C})\ar[d]_L\ar[r]^{\mbf{N}^{\otimes} f} & \mbf{N}^{\otimes}(\mbf{D})\ar[d]^L\\
\mbf{N}\mbf{C}[\mbf{W}^{-1}_{\mbf{C}}]^{\otimes}\ar@{.>}[ur]\ar[r] & \mbf{N} \mbf{D}[\mbf{W}^{-1}_{\mbf{D}}]^{\otimes}
}
\]
simply by the fact that $L$ exists, $\mbf{N}^{\otimes} f$ preserves weak equivalences and $\mbf{N}\mbf{C}[\mbf{W}^{-1}_{\mbf{C}}]$ satisfies a universal property. 
\end{proof}

\subsection{Some Notes on Spectral Categories}

The work of \cite{BGT} demonstrates that the category of small, stable, idempotent complete $\infty$-categories (hereafter referred to as $\mc{C}\text{at}^{\operatorname{perf}}_\infty$) is a natural domain for invariants such as $K$-theory, $\thh$ and $\tc$. We will adopt this point of view later in the paper for $\thh$. However, in order to work with $\mc{C}\text{at}^{\text{perf}}_\infty$ we need a more tractable rigid model for it. The papers \cite{BGT} and \cite{BGT_TC} construct such a rigid model using spectral categories. In this section we review some constructions with spectral categories for later use.

\begin{defn}
A \textbf{spectral category} is a category enriched in spectra.  More explicitly, a spectral category is a category $\mc{D}$ with morphism spectra $\mc{D}(d_0, d_1)$. Furthermore, composition is given by maps of spectra
\[
\mc{D}(d_0,d_1) \sma \mc{D}(d_1, d_2) \to \mc{D}(d_0,d_2)
\]
Since composition must be associative, we must use one of the modern model categories of spectra; typically symmetric spectra are used. 
\end{defn}
\begin{rmk}
The canonical example of such a category is the category of spectra itself. Spectra naturally  have mapping spectra which are composed as above. 
\end{rmk}

Spectral categories come equipped with a model structure where the weak equivalences are DK-equivalences or Morita equivalences (see \cite{BGT} and \cite{TabuadaSpectral}). We refer to the category of spectral categories $\mc{C}\text{at}_{\operatorname{Sp}}$. 

\begin{defn}
Spectral categories come equipped with a monoidal structure. That is, there is an operation $\sma: \mc{C}\text{at}_{\operatorname{Sp}} \times \mc{C}\text{at}_{\operatorname{Sp}} \to \mc{C}\text{at}_{\operatorname{Sp}}$. Given spectral categories $\mc{C}$ and $\mc{D}$ we can form a new spectral category with objects  $\operatorname{ob} \mc{C} \times \operatorname{ob} \mc{D}$ and morphisms $\mc{C} \sma \mc{D} (c_0 \times d_0, c_1 \times d_1) := \mc{C}(c_0, c_1) \sma \mc{D}(d_0, d_1)$. 
\end{defn}

However, the symmetric monoidal structure is not always well behaved with respect to the model category structure. Given two cofibrant spectral categories, their smash product need not be cofibrant. 

 In \cite{BGT}, Blumberg, Gepner and Tabuada get around this difficulty by introducing point-wise cofibrant spectral categories. As one would guess, these are spectral categories all of whose morphism spectra are cofibrant symmetric spectra. Their properties are summarized in Prop 4.2 of \cite{BGT_TC}. As in \cite{BGT_TC} we let $\mc{C}\text{at}^{\operatorname{flat}}_{\Sp}$ denote the full subcategory of pointwise cofibrant spectral categories. We also let $\mc{W}$ denote the Morita equivalences between such categories. 

The above Thm. \ref{sym_mon}  implies that the model category of flat spectral categories forms a symmetric monoidal special relative category. The authors of \cite{BGT_TC} then apply \cite[Prop. 4.1.3.2]{LurieHA} to obtain

\begin{lem}\label{flat}\cite[3.4,3.5]{BGT_TC}
Cofibrant replacement $\mc{C}\text{at}^{\operatorname{flat}}_{\operatorname{Sp}} \to (\mc{C}\text{at}_{\operatorname{Sp}})^c$ induces a categorical equivalence
\[
\mc{C}\text{at}^{\operatorname{perf}}_\infty \simeq \mbf{N}(\mc{C}\text{at}^{\operatorname{flat}}_{\operatorname{Sp}})[\mc{W}^{-1}] \to \mbf{N}((\mc{C}\text{at}_{\operatorname{Sp}})^c)[\mc{W}^{-1}]
\]
\end{lem}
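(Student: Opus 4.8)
The plan is to exhibit cofibrant replacement as a homotopy equivalence of relative categories, and then invoke the fact that the localization functor $(\mbf{C},\mbf{W})\mapsto \mbf{N}(\mbf{C})[\mbf{W}^{-1}]$ carries such equivalences to categorical equivalences; the statement itself tells us the relevant input is \cite[Prop. 4.1.3.2]{LurieHA}, so the efficient route is to check its hypotheses rather than reprove it.

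First I would record the model-categorical input. The Morita model structure on $\mc{C}\text{at}_{\operatorname{Sp}}$ is a left Bousfield localization of the Dwyer--Kan (projective) model structure, so it has the \emph{same} cofibrations; in particular a cofibrant spectral category is built by transfinite composition of pushouts of generating cofibrations, and one checks directly that these operations preserve pointwise cofibrancy. Hence every cofibrant spectral category is pointwise cofibrant, giving a full inclusion of relative categories $\iota : (\mc{C}\text{at}_{\operatorname{Sp}})^c \hookrightarrow \mc{C}\text{at}^{\operatorname{flat}}_{\operatorname{Sp}}$, which preserves Morita equivalences since these are simply restricted from the ambient category. Functorial cofibrant replacement (small object argument) supplies $Q : \mc{C}\text{at}^{\operatorname{flat}}_{\operatorname{Sp}} \to (\mc{C}\text{at}_{\operatorname{Sp}})^c$ together with a natural transformation $q : \iota Q \Rightarrow \id$ whose components are acyclic fibrations in the Morita model structure, hence Morita equivalences; so $Q$ is a relative functor and $q$ is a natural weak equivalence. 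Restricting $q$ along $\iota$ gives, for each cofibrant $\mc{D}$, a Morita equivalence $Q\iota\mc{D} \to \mc{D}$ between cofibrant objects, i.e.\ a natural weak equivalence $Q\iota \Rightarrow \id_{(\mc{C}\text{at}_{\operatorname{Sp}})^c}$. Thus $Q$ and $\iota$ are mutually homotopy-inverse as relative functors.

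Next I would pass to $\infty$-categories. Because $(\mbf{C},\mbf{W}) \mapsto \mbf{N}(\mbf{C})[\mbf{W}^{-1}]$ is a functor from relative categories to $\infty$-categories (via Dwyer--Kan localization and the coherent nerve, cf.\ Convention \ref{coh_nerve}) that sends a natural transformation through weak equivalences to a natural equivalence of functors --- this being exactly the content of \cite[Prop. 4.1.3.2]{LurieHA} --- the induced maps $\mbf{N}(Q)$ and $\mbf{N}(\iota)$ are mutually inverse categorical equivalences
\[
\mbf{N}(\mc{C}\text{at}^{\operatorname{flat}}_{\operatorname{Sp}})[\mc{W}^{-1}] \;\simeq\; \mbf{N}((\mc{C}\text{at}_{\operatorname{Sp}})^c)[\mc{W}^{-1}].
\]
Finally I would compose with the identification $\mc{C}\text{at}^{\operatorname{perf}}_\infty \simeq \mbf{N}(\mc{C}\text{at}^{\operatorname{flat}}_{\operatorname{Sp}})[\mc{W}^{-1}]$ established in \cite{BGT} (the equivalence of homotopy theories between Morita spectral categories and small stable idempotent-complete $\infty$-categories recorded in the diagram of Convention \ref{coh_nerve}) to obtain the displayed equivalence.

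The main obstacle is not any single calculation but the bookkeeping around the weak equivalences: one must be careful that "weak equivalence" consistently means "Morita equivalence" throughout, that $(\mc{C}\text{at}_{\operatorname{Sp}})^c$ and $\mc{C}\text{at}^{\operatorname{flat}}_{\operatorname{Sp}}$ genuinely inherit relative structures for which cofibrant replacement is a relative functor (this hinges on the "same cofibrations" observation above), and --- the genuinely technical point --- that passage to the localized $\infty$-categories is suitably $2$-functorial, so that a natural weak equivalence of relative functors becomes a natural equivalence. That last point is precisely what \cite[Prop. 4.1.3.2]{LurieHA} provides, so the work is in verifying its hypotheses apply to $(\mc{C}\text{at}^{\operatorname{flat}}_{\operatorname{Sp}},\mc{W})$ and $((\mc{C}\text{at}_{\operatorname{Sp}})^c,\mc{W})$, which is essentially the pointwise-cofibrancy bookkeeping just described.
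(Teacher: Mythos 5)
The paper offers no proof of this lemma---it is imported directly from \cite{BGT_TC} with only the remark that it follows by applying Lurie's localization machinery to the relative category of flat spectral categories---and your argument is a correct reconstruction along exactly those lines: identify cofibrant replacement and the inclusion as mutually inverse homotopy equivalences of relative categories with weak equivalences the Morita equivalences, then localize. The one substantive input is the claim that every cofibrant spectral category is pointwise cofibrant, which is where the real technical content sits (it requires the analysis of pushouts along the generating cofibrations of the Morita/DK model structure and is among the properties of flat spectral categories summarized in \cite{BGT_TC}), so you should cite it rather than dismiss it with ``one checks directly''; also note that \cite[Prop. 4.1.3.2]{LurieHA} is really what is needed for the symmetric monoidal refinement (Lemma \ref{flat_sym_mon}), while the fact you actually use---that localization carries a natural transformation through weak equivalences to a natural equivalence---is more elementary and follows from the universal property of the localization.
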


In the sequel, this will allow us to define $\thh$ on spectral categories and pass to $\mc{C}\text{at}^{\operatorname{perf}}_\infty$.

\subsection{Symmetric Monoidal Structures on $\mc{C}\text{at}_\infty$}

One thing that will be extremely important is the product on presentable $\infty$-categories. We recall the theorems from Lurie:

\begin{thm}\cite[Pr.6.3.1.14]{LurieHA}
The category of presentable $\infty$-categories, $\mc{P}\text{r}^L$,  is a symmetric monoidal $\infty$-category. The category of stable, presentable $\infty$-categories, $\mc{P}\text{r}^L_{\text{St}}$, is also a symmetric monoidal $\infty$-category with unit $\operatorname{Sp}$
\end{thm}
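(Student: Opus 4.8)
The plan is to follow the standard route through Lurie's $\infty$-operadic machinery; since this is a result of \cite{LurieHA}, I only sketch the architecture. The first task is to construct, for presentable $\infty$-categories $\mc{C}$ and $\mc{D}$, a presentable $\infty$-category $\mc{C} \otimes \mc{D}$ equipped with a functor $\mc{C} \times \mc{D} \to \mc{C} \otimes \mc{D}$ that is universal among functors out of $\mc{C} \times \mc{D}$ preserving colimits separately in each variable. Concretely, one takes $\mc{C} \otimes \mc{D}$ to be the full subcategory of $\fun(\mc{C}^{\mathrm{op}} \times \mc{D}^{\mathrm{op}}, \mc{S})$ spanned by the functors preserving limits in each variable; this is an accessible localization of a presentable functor category, hence presentable, and the adjoint functor theorem together with the identification of a presentable category with its $\infty$-category of limit-preserving presheaves supplies the universal property. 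One checks along the way that $\mc{S}$ is a unit, since $\fun^{L}(\mc{S}, \mc{E}) \simeq \mc{E}$ for all $\mc{E} \in \mc{P}\text{r}^L$.

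The second task is to promote this bifunctor to a genuine symmetric monoidal structure, i.e. to a coCartesian fibration of $\infty$-operads $\mc{P}\text{r}^{L,\otimes} \to \mathrm{Fin}_*$ with underlying $\infty$-category $\mc{P}\text{r}^L$. I would invoke the general existence result of \cite{LurieHA} for symmetric monoidal structures determined by a tensor product with the above universal property: it suffices to verify that the class of ``colimit-preserving in each variable'' multilinear functors is closed under composition and permutation of factors and is suitably compatible with the forgetful functor to $\cat_\infty$, after which the machine produces $\mc{P}\text{r}^{L,\otimes}$ with coherent associativity and symmetry constraints. This is the step where essentially all of the work lies, and the one I expect to be the main obstacle: it is a delicate bookkeeping exercise to check that multilinear functors behave correctly under the various free/forgetful adjunctions relating $\mc{P}\text{r}^L$ and $\cat_\infty$, so that the tensor product is associative up to coherent homotopy rather than merely levelwise.

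For the stable refinement, I would identify $\mc{P}\text{r}^L_{\text{St}}$ with $\mathrm{Mod}_{\Sp}(\mc{P}\text{r}^L)$, the $\infty$-category of modules over $\Sp$ in the symmetric monoidal structure just built. Two inputs are needed: (i) $\Sp$ is an idempotent object of $\mc{P}\text{r}^L$, i.e. the unit map $\mc{S} \to \Sp$ (stabilization of spaces) satisfies that $\Sp \otimes \Sp \to \Sp$ is an equivalence; and (ii) for presentable $\mc{C}$ there is an equivalence $\Sp \otimes \mc{C} \simeq \mathrm{Stab}(\mc{C})$, and $\mc{C}$ is stable precisely when the coaugmentation $\mc{C} \to \Sp \otimes \mc{C}$ is an equivalence, equivalently when $\mc{C}$ admits a (then unique) $\Sp$-module structure. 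Granting these, the general theory of modules over a commutative algebra makes $\mathrm{Mod}_{\Sp}(\mc{P}\text{r}^L)$ a symmetric monoidal $\infty$-category with unit $\Sp$, and idempotence identifies its underlying $\infty$-category with the full subcategory $\mc{P}\text{r}^L_{\text{St}} \subset \mc{P}\text{r}^L$; transporting the structure along this equivalence yields the theorem. Finally, one records that for stable $\mc{C}, \mc{D}$ the relative tensor product $\mc{C} \otimes_{\Sp} \mc{D}$ agrees with $\mc{C} \otimes \mc{D}$, since tensoring with the idempotent $\Sp$ changes nothing --- so the notation ``$\otimes$'' on $\mc{P}\text{r}^L_{\text{St}}$ is unambiguous.
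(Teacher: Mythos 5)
Your outline is correct and follows the same route as the source: the paper offers no proof of this statement, citing it directly from \cite{LurieHA}, and your sketch (tensor product via functors preserving colimits separately in each variable, unit $\mc{S}$, the operadic construction of $\mc{P}\text{r}^{L,\otimes}$, and the identification of $\mc{P}\text{r}^L_{\text{St}}$ with modules over the idempotent object $\operatorname{Sp}$, so that $\operatorname{Sp}\otimes\mc{C}$ is the stabilization and the relative tensor product over $\operatorname{Sp}$ agrees with the absolute one) is precisely Lurie's argument. Nothing further is needed beyond the coherence bookkeeping you correctly flag as where the real work lies.
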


The following refinement is found in \cite{BGT} and \cite{BFN}. 

\begin{thm}\label{morita_thm}
The category of small, stable, idempotent complete $\infty$-categories, $\mc{C}\text{at}^{\text{perf}}_\infty$, is a closed symmetric monoidal $\infty$-category. The unit for the product structure is $\operatorname{Sp}^\omega$, the compact spectra, and the internal mapping object is $\fun^{\operatorname{Ex}} (\mc{C},\mc{D})$.
\end{thm}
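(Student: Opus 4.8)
The plan is to transport the symmetric monoidal structure on stable presentable $\infty$-categories (Lurie's theorem just cited) along Ind-completion. Recall from \cite{LurieHA} that $\operatorname{Ind}$ gives an equivalence from $\cat^{\operatorname{perf}}_\infty$ onto the (non-full) subcategory $\mc{P}\text{r}^{L,\omega}_{\text{St}} \subset \mc{P}\text{r}^L_{\text{St}}$ whose objects are the compactly generated stable presentable $\infty$-categories and whose morphisms are the colimit-preserving functors that \emph{also} preserve compact objects; the inverse sends a compactly generated stable presentable category to its full subcategory $(-)^\omega$ of compact objects. So the first task is to check that the monoidal product $\otimes$ on $\mc{P}\text{r}^L_{\text{St}}$ restricts to $\mc{P}\text{r}^{L,\omega}_{\text{St}}$: if $\mc{C}$ and $\mc{D}$ are compactly generated stable presentable categories, then so is $\mc{C}\otimes\mc{D}$, and $(\mc{C}\otimes\mc{D})^\omega$ is the idempotent completion of the full subcategory spanned by the elementary tensors $c\otimes d$ with $c\in\mc{C}^\omega$, $d\in\mc{D}^\omega$. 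This is the standard computation using the description of $\mc{C}\otimes\mc{D}$ as $\operatorname{Fun}^R(\mc{C}^{\text{op}},\mc{D})$ together with the adjoint functor theorem; one also notes that the unit $\Sp$ is compactly generated by the sphere, so that $\Sp^\omega$ is carried by $\operatorname{Ind}$ to the unit.

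Granting this, I would simply \emph{define} the product on $\cat^{\operatorname{perf}}_\infty$ by $\mc{C}\otimes\mc{D} := (\operatorname{Ind}(\mc{C})\otimes\operatorname{Ind}(\mc{D}))^\omega$ with unit $\Sp^\omega$; transport of structure along the equivalence $\operatorname{Ind}$ then upgrades this to a symmetric monoidal $\infty$-category. One checks that the objects produced genuinely lie in $\cat^{\operatorname{perf}}_\infty$: smallness and stability of $\mc{C}\otimes\mc{D}$ are immediate from the above description, and the subcategory of compact objects of any presentable $\infty$-category is idempotent complete since presentable categories admit all small colimits, in particular splittings of idempotents.

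For the closed structure I would identify the internal mapping object. On the presentable side the internal hom is $\operatorname{Fun}^L(-,-)$, and by the universal property of Ind-completion $\operatorname{Fun}^L(\operatorname{Ind}(\mc{C}),\operatorname{Ind}(\mc{D})) \simeq \fun^{\operatorname{Ex}}(\mc{C},\operatorname{Ind}(\mc{D}))$. The crux—and the step I expect to be the main obstacle—is to show that the compact objects of $\operatorname{Fun}^L(\operatorname{Ind}(\mc{C}),\operatorname{Ind}(\mc{D}))$ are exactly the colimit-preserving functors that preserve compact objects, i.e. that $\operatorname{Fun}^L(\operatorname{Ind}(\mc{C}),\operatorname{Ind}(\mc{D}))^\omega \simeq \fun^{\operatorname{Ex}}(\mc{C},\mc{D})$. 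Once that is in hand, the adjunction $\Map_{\cat^{\operatorname{perf}}_\infty}(\mc{C}\otimes\mc{D},\mc{E}) \simeq \Map_{\cat^{\operatorname{perf}}_\infty}(\mc{C},\fun^{\operatorname{Ex}}(\mc{D},\mc{E}))$ follows by applying $(-)^\omega$ to the closed structure on $\mc{P}\text{r}^L_{\text{St}}$ and checking that a colimit-preserving functor out of $\operatorname{Ind}(\mc{C})\otimes\operatorname{Ind}(\mc{D})$ preserves compact objects precisely when its exponential transpose does—this is where one uses that the compacts of the tensor are generated under retracts by the $c\otimes d$. Finally, one verifies pointwise that $\fun^{\operatorname{Ex}}(\mc{C},\mc{D})$ is itself small, stable and idempotent complete, so the internal hom lands in $\cat^{\operatorname{perf}}_\infty$. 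The details of all of this are carried out in \cite{BGT} and \cite{BFN}; an alternative route, also due to \cite{BGT}, is to first construct the symmetric monoidal structure on $\cat^{\operatorname{Ex}}_\infty$ classifying bi-exact functors and then observe that idempotent completion $\cat^{\operatorname{Ex}}_\infty \to \cat^{\operatorname{perf}}_\infty$ is a symmetric monoidal localization, so the structure descends.
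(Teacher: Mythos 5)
The paper offers no proof of this statement---it is quoted from \cite{BGT} and \cite{BFN}---and your overall framework (transporting the monoidal structure on $\mc{P}\text{r}^L_{\text{St}}$ along $\operatorname{Ind}\colon \cat^{\operatorname{perf}}_\infty \simeq \mc{P}\text{r}^{L,\omega}_{\text{St}}$, setting $\mc{C}\otimes\mc{D} := (\operatorname{Ind}(\mc{C})\otimes\operatorname{Ind}(\mc{D}))^\omega$ with unit $\Sp^\omega$) is indeed the one used in those sources. However, the step you single out as the crux is not merely the hard step: it is false. One has $\fun^L(\operatorname{Ind}(\mc{C}),\operatorname{Ind}(\mc{D})) \simeq \operatorname{Ind}(\mc{C}^{\text{op}})\otimes\operatorname{Ind}(\mc{D}) \simeq \operatorname{Ind}(\mc{C}^{\text{op}}\otimes\mc{D})$, so its compact objects are the thick closure of the ``elementary'' functors $\operatorname{Map}_{\mc{C}}(c,-)\sma d$, and these do not coincide with $\fun^{\operatorname{Ex}}(\mc{C},\mc{D})$ in general. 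The identity of $\mc{C}$ always lies in $\fun^{\operatorname{Ex}}(\mc{C},\mc{C})$ but is a compact object of the functor category only when $\mc{C}$ is smooth (for $\mc{C}=\operatorname{LMod}^{\operatorname{perf}}_A$ it corresponds to the $A$-bimodule $A$; for $A=H\mbf{F}_p$ this bimodule is not compact over $H\mbf{F}_p\sma H\mbf{F}_p$); conversely $\operatorname{Map}_{\mc{C}}(c,-)\sma d$ preserves compacts only when the mapping spectra of $\mc{C}$ are themselves compact, i.e.\ when $\mc{C}$ is proper. So $(-)^\omega$ does not carry the internal hom of $\mc{P}\text{r}^L_{\text{St}}$ to $\fun^{\operatorname{Ex}}(\mc{C},\mc{D})$, and ``applying $(-)^\omega$ to the closed structure'' cannot produce the closed structure asserted in the theorem. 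This is precisely the smooth/proper asymmetry the paper itself confronts in Ex.~\ref{dual_cat}, where the dual is cut out by right-compactness rather than being the compacts of $\operatorname{Ind}(\mc{C})^{\text{op}}$. (A smaller slip of the same flavor: $(\mc{C}\otimes\mc{D})^\omega$ is the idempotent completion of the \emph{thick} subcategory generated by the $c\otimes d$, not of the full subcategory they span.)

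The repair is to promote the argument in your closing sentences to the actual proof, bypassing any identification of $\fun^{\operatorname{Ex}}(\mc{B},\mc{E})$ with compacts of a presentable functor category. Either characterize $\mc{A}\otimes\mc{B}$ by its universal property of classifying bi-exact functors (the route of \cite{BGT}), from which closedness with internal hom $\fun^{\operatorname{Ex}}(\mc{B},\mc{E})$ is immediate, or argue directly on mapping spaces: exact functors $\mc{A}\otimes\mc{B}\to\mc{E}$ are colimit-preserving functors $\operatorname{Ind}(\mc{A})\otimes\operatorname{Ind}(\mc{B})\to\operatorname{Ind}(\mc{E})$ preserving compacts, and since the compacts of the source are generated under finite colimits and retracts by the $a\otimes b$, preservation of compacts is equivalent to the exponential transpose landing in $\fun^{\operatorname{Ex}}(\mc{B},\mc{E})$; one then checks, as you do, that $\fun^{\operatorname{Ex}}(\mc{B},\mc{E})$ is essentially small, stable and idempotent complete. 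Note that the theorem only requires $(-)^\omega$ to be symmetric monoidal, not closed symmetric monoidal, and in fact it is not the latter.
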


Idempotent complete $\infty$-categories are essentially categories of compact objects \cite[5.4.2.4]{LurieHTT}, which are naturally the domain of $K$-theory and $\thh$. The above theorem says that there is a tensor product on this category and that it has a unit object. This will enable us to talk about duality in this category. This will be essential for cleanly phrasing Koszul duality. 

As with other $\infty$-categorical constructions we have dealt with, we would like a rigid model for $\mc{C}\text{at}^{\operatorname{perf},\otimes}_\infty$. Luckily, this is provided by \cite{BGT_TC}.

\begin{lem}\label{flat_sym_mon}\cite{BGT_TC}
There is an equivalence of symmetric monoidal $\infty$-categories
\[
(\mc{C}\text{at}^{\operatorname{perf}}_\infty)^{\otimes} \simeq (\mbf{N}(\mc{C}\text{at}^{\operatorname{flat}}_{\operatorname{Sp}})[\mc{W}^{-1}])^{\otimes}
\]
where again $\mc{W}$ are the Morita equivalence. 
\end{lem}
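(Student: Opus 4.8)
The plan is to promote the equivalence of underlying $\infty$-categories already supplied by Lemma~\ref{flat} to an equivalence of symmetric monoidal $\infty$-categories, by exhibiting both sides as symmetric monoidal localizations of one relative category and then checking that the product so obtained agrees with the abstract tensor product of Theorem~\ref{morita_thm}. First I would verify that $(\mc{C}\text{at}^{\operatorname{flat}}_{\operatorname{Sp}},\mc{W})$, equipped with the smash product of spectral categories, is a symmetric monoidal special relative category. The conditions on the weak equivalences are immediate; the one substantive point is that $\sma$ must preserve weak equivalences in each variable, i.e.\ that smashing with a pointwise cofibrant spectral category $\mc{C}$ carries Morita equivalences to Morita equivalences. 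This is exactly the property for which pointwise cofibrancy (``flatness'') was imposed --- it plays the role of flatness of modules --- and it follows because cofibrancy of every morphism spectrum of $\mc{C}$ makes $\mc{C}\sma-$ preserve levelwise stable equivalences, hence DK-equivalences, hence (after idempotent completion) Morita equivalences. Granting this, Theorem~\ref{sym_mon}, or directly the localization machinery of \cite[\S 4.1]{LurieHA} recalled just before it, produces the symmetric monoidal $\infty$-category $\mbf{N}(\mc{C}\text{at}^{\operatorname{flat}}_{\operatorname{Sp}})[\mc{W}^{-1}]^{\otimes}$, whose underlying $\infty$-category is $\mc{C}\text{at}^{\operatorname{perf}}_\infty$ by Lemma~\ref{flat}.

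It then remains to identify the monoidal product so produced with the tensor product of Theorem~\ref{morita_thm}. The cleanest route passes through module $\infty$-categories: sending a flat spectral category $\mc{D}$ to its category of spectral modules $\operatorname{Mod}_{\mc{D}}$ is symmetric monoidal, carrying $\sma$ on the source to Lurie's tensor product on $\mc{P}\text{r}^L_{\text{St}}$, since there is a natural equivalence $\operatorname{Mod}_{\mc{C}\sma\mc{D}}\simeq\operatorname{Mod}_{\mc{C}}\otimes_{\operatorname{Sp}}\operatorname{Mod}_{\mc{D}}$. On the other hand, $\operatorname{Ind}\colon\mc{C}\text{at}^{\operatorname{perf}}_\infty\to\mc{P}\text{r}^L_{\text{St}}$ is a symmetric monoidal, fully faithful embedding onto the compactly generated stable presentable $\infty$-categories, under which the tensor product of Theorem~\ref{morita_thm} corresponds to the restriction of Lurie's tensor product to compact objects. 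Since $\operatorname{Ind}(\operatorname{Mod}_{\mc{D}}^{\omega})\simeq\operatorname{Mod}_{\mc{D}}$ for flat $\mc{D}$, chasing $\mc{D}$ around the resulting square shows that the localized smash product computes precisely the tensor product on compact module categories, i.e.\ the monoidal structure of Theorem~\ref{morita_thm}. As a symmetric monoidal functor whose underlying functor is an equivalence is itself a symmetric monoidal equivalence, this finishes the argument.

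The main obstacle is this last identification. The tensor product on $\mc{C}\text{at}^{\operatorname{perf}}_\infty$ is characterized by a universal property --- it classifies functors exact in each variable --- whereas the smash product of spectral categories is defined levelwise on morphism spectra, so bridging the two genuinely requires either the Ind-completion comparison above or a direct check that $\mbf{N}(\mc{C}\sma\mc{D})$ corepresents bilinear exact functors out of $\mbf{N}\mc{C}\times\mbf{N}\mc{D}$. In either approach one must take care that pointwise cofibrancy is exactly what makes the point-set smash product model the derived one, so that passing to the localization does not silently alter the product; this is the structural reason the entire discussion is carried out inside $\mc{C}\text{at}^{\operatorname{flat}}_{\operatorname{Sp}}$ rather than all of $\mc{C}\text{at}_{\operatorname{Sp}}$.
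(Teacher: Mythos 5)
The paper itself contains no proof of this lemma: it is imported verbatim from \cite{BGT_TC}, and the surrounding text only remarks that it amounts to saying $\mc{C}\text{at}^{\operatorname{flat}}_{\operatorname{Sp}}$ is a symmetric monoidal special relative category whose localization at $\mc{W}$ is $\mc{C}\text{at}^{\operatorname{perf}}_\infty$. So there is no internal argument to compare against; judged on its own, your proposal is a sound reconstruction of the cited argument in outline: verify the symmetric monoidal special relative category axioms for $(\mc{C}\text{at}^{\operatorname{flat}}_{\operatorname{Sp}},\sma,\mc{W})$, apply the localization machinery of Thm.~\ref{sym_mon} (i.e.\ \cite[4.1.3.4]{LurieHA}) to get a symmetric monoidal structure whose underlying $\infty$-category is $\mc{C}\text{at}^{\operatorname{perf}}_\infty$ by Lem.~\ref{flat}, and then identify the induced product with that of Thm.~\ref{morita_thm}.

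Two steps need more care than you give them. First, the substantive hypothesis --- that $\mc{C}\sma(-)$ preserves Morita equivalences for flat $\mc{C}$ --- does not follow from ``preserves levelwise stable equivalences, hence DK-equivalences, hence Morita equivalences'': a Morita equivalence is in general not a DK-equivalence, so you must separately argue compatibility of the smash product with passage to pretriangulated hulls/perfect modules; this is exactly the content of the properties of flat spectral categories summarized in \cite[Prop.~4.2]{BGT_TC} that the paper alludes to. Second, in the identification step, $\operatorname{Ind}$ is symmetric monoidal and fully faithful onto the compactly generated stable presentable categories with compact-object-preserving functors, not onto all of $\mc{P}\text{r}^L_{\text{St}}$, and the equivalence $\operatorname{Mod}_{\mc{C}\sma\mc{D}}\simeq\operatorname{Mod}_{\mc{C}}\otimes\operatorname{Mod}_{\mc{D}}$ itself uses flatness. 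Moreover, to invoke your final reduction (a symmetric monoidal functor with underlying equivalence is a symmetric monoidal equivalence) you need the comparison packaged as a single symmetric monoidal functor out of $(\mbf{N}(\mc{C}\text{at}^{\operatorname{flat}}_{\operatorname{Sp}})[\mc{W}^{-1}])^{\otimes}$, not two separate embeddings into $\mc{P}\text{r}^L$; this is obtained by applying the universal property of the localization to the symmetric monoidal, Morita-invariant assignment sending a flat spectral category to its category of perfect modules in $\mc{C}\text{at}^{\operatorname{perf}}_\infty$. With those adjustments your plan goes through and is essentially the argument of \cite{BGT_TC}.
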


This lemma is essentially saying that $\mc{C}\text{at}^{\text{flat}}_{\Sp}$ is a symmetric monoidal special relative category and that upon passing to $\infty$-categories we get the perfect $\infty$-categories.

The monoidal structure of $\mc{C}\text{at}^{\operatorname{perf}}_\infty$ will play an important role in the sequel. We now discuss the Morita theoretic characterization of the monoidal struture on $\cat^{\operatorname{perf}}_\infty$. The theorem is in \cite{BGT}, but at points it will be easier to work with the guts of the theorem, so we need some introduction. 

\begin{rmk}
We will have occasional to use Ind-categories \cite[Ch. 5]{LurieHTT} below. We do not have space to discuss these in depth (nor can we improve on Lurie's exposition), but we can indicate definitions. Given an $\infty$-category $\mc{C}$, $\operatorname{Ind}(\mc{C})$ will be the category we get by formally adjoining filtered colimits. Since filtered colimits play a role in defining compact objects, it is not so unexpected that $\operatorname{Ind}(\mc{C})$ categories will play a role in questions involving compact objects. In particular, if we take $\operatorname{Ind}(\mc{C})$ and take the compact objects of that category, we obtain a kind of ``completion'' of $\mc{C}$, the idempotent completion \cite{LurieHTT}. The notion of $\operatorname{Ind}$-categories also allows us to define compactly-generated categories. These are categories $\mc{C}$ such that there are subcategories $\mc{C}^0 \subset \mc{C}$ such that the object of $\mc{C}^0$ are compact and $\operatorname{Ind}(\mc{C}^0) \simeq \mc{C}$ (again, see \cite[Ch. 5]{LurieHTT} for further explanation). This will be used in \ref{thick_equivalence}.

\end{rmk}

Arguably one of the more important parts of this monoidal structure is the existence of various maps from the unit object. Specifically, for any $c \in \mc{C}$ where $\mc{C}$ is an object of $\cat^{\operatorname{perf}}_\infty$, there is a corresponding map from spectra to $\mc{C}$, which we define as follows. 

\begin{defn}\label{mu}
Let $\mc{C}$ be a stable $\infty$-category and let $c \in \mc{C}$ be an object of that category. We define a functor $\mu_c : \operatorname{Sp} \to \mc{C}$ by specifying that the functor be colimit preserving and mapping $S$ to $c$. 
\end{defn}

\begin{defn}
For $\mc{A}$ a stable $\infty$-category, there is a map
\[
\Sp \to \mc{A} \xrightarrow{j} \operatorname{Ind}(\mc{A})
\]
which upon taking compact objects becomes
\[
\operatorname{Sp}^\omega \to \mc{A}^\omega \to \operatorname{Ind}(\mc{A})^\omega \simeq \operatorname{Idem}(\mc{A}).
\]
In particular, for a stable, idempotent complete $\infty$-category, $\operatorname{Idem}(\mc{A}) \simeq \mc{A}$, so $j \circ \mu_c: \operatorname{Sp} \to \mc{A} \to \operatorname{Ind}(\mc{A})$ becomes a map $\mu^\omega_c : \operatorname{Sp}^\omega \to \mc{A}$. 

Finally, we tensor with $\mc{B}^{\text{op}}$ and map into $\operatorname{Sp}$ to get the following map
\[
\nu_a : \fun^{\operatorname{Ex}} (\mc{A}\widehat{\otimes} \mc{B}^{\text{op}}, \operatorname{Sp}) \to \fun^{\operatorname{Ex}}(\mc{B}^{\text{op}}, \operatorname{Sp}) \simeq \mc{B}
\]
This last equivalence is by \cite[Pr. 6.3.1.16]{LurieHA} and the fact that $\mc{B}$ is stable so that $\mc{B}\otimes \operatorname{Sp} \simeq \mc{B}$. 
\end{defn}
Having defined $\nu_a$, we define the notion of right-compactness. 
\begin{defn}
If for an element $F \in \fun^{\text{Ex}} (\mc{A}\widehat{\otimes} \mc{B}^{\text{op}}, \Sp)$, the image of $\nu_a$ is compact for \textit{every} $a \in \mc{A}$, then the element is \textbf{right-compact}. We will denote full subcategory of right-compact functors by $\fun^{\operatorname{RC}}(\mc{A},\mc{B})$. 
\end{defn}

The following appears as \cite[Cor. 2.16]{BGT}:

\begin{cor}
For $\mc{A}, \mc{B} \in \operatorname{Cat}^{\operatorname{perf}}_\infty$ there is an equivalence of $\infty$-categories
\[
\fun^{\operatorname{Ex}}(\mc{A},\mc{B}) \simeq \fun^{\operatorname{Ex,RC}}(\mc{A}\widehat{\otimes}\mc{B}^{\text{op}}, \Sp).
\]
\end{cor}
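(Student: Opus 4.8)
The plan is to deduce this from the closed symmetric monoidal structures already available: the one on $\cat^{\operatorname{perf}}_\infty$ from Theorem \ref{morita_thm}, and, more conveniently, the Lurie tensor product on $\mc{P}\text{r}^L_{\text{St}}$. The bridge is the functor $\operatorname{Ind}$, which embeds $\cat^{\operatorname{perf}}_\infty$ as a full symmetric monoidal subcategory of $\mc{P}\text{r}^L_{\text{St}}$, with essential image the compactly generated stable presentable $\infty$-categories, and under which an exact functor $\mc{A}\to\mc{B}$ corresponds precisely to a colimit-preserving functor $\operatorname{Ind}\mc{A}\to\operatorname{Ind}\mc{B}$ that preserves compact objects. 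I would take this, together with the facts that each $\operatorname{Ind}\mc{C}$ is dualizable in $\mc{P}\text{r}^L_{\text{St}}$ with $(\operatorname{Ind}\mc{C})^\vee\simeq\operatorname{Ind}(\mc{C}^{\text{op}})$, and that $\operatorname{Ind}(\mc{C})\simeq\fun^{\operatorname{Ex}}(\mc{C}^{\text{op}},\Sp)$ for $\mc{C}\in\cat^{\operatorname{perf}}_\infty$, as inputs, all from \cite{LurieHTT}, \cite{LurieHA}, \cite{BGT}.

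First I would pass to presentable categories. By the universal property of $\operatorname{Ind}$ (and stability, so that right-exactness is exactness) one has $\fun^{L}(\operatorname{Ind}\mc{A},\operatorname{Ind}\mc{B})\simeq\fun^{\operatorname{Ex}}(\mc{A},\operatorname{Ind}\mc{B})$, and restricting to the functors that preserve compact objects yields $\fun^{\operatorname{Ex}}(\mc{A},\mc{B})$ on the nose since $(\operatorname{Ind}\mc{B})^{\omega}\simeq\mc{B}$. Thus $\fun^{\operatorname{Ex}}(\mc{A},\mc{B})$ is the full subcategory of $\fun^{L}(\operatorname{Ind}\mc{A},\operatorname{Ind}\mc{B})$ spanned by the compact-object-preserving functors.

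Next I would compute $\fun^{L}(\operatorname{Ind}\mc{A},\operatorname{Ind}\mc{B})$ by currying. Using $\operatorname{Ind}\mc{B}\simeq\fun^{L}(\operatorname{Ind}(\mc{B}^{\text{op}}),\Sp)$ (dualizability), the closed structure on $\mc{P}\text{r}^L_{\text{St}}$, symmetric monoidality of $\operatorname{Ind}$, and once more the universal property of $\operatorname{Ind}$, one gets
\[
\fun^{L}(\operatorname{Ind}\mc{A},\operatorname{Ind}\mc{B}) \simeq \fun^{L}(\operatorname{Ind}\mc{A}\otimes\operatorname{Ind}(\mc{B}^{\text{op}}),\Sp) \simeq \fun^{L}(\operatorname{Ind}(\mc{A}\widehat{\otimes}\mc{B}^{\text{op}}),\Sp) \simeq \fun^{\operatorname{Ex}}(\mc{A}\widehat{\otimes}\mc{B}^{\text{op}},\Sp).
\]
It then remains to see what the compact-object-preserving condition becomes. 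For $F$ on the left, preserving compacts means $F|_{\mc{A}}$ lands in $\mc{B}\subset\operatorname{Ind}\mc{B}$; unwinding the currying, the object $F|_{\mc{A}}(a)\in\operatorname{Ind}\mc{B}\simeq\fun^{\operatorname{Ex}}(\mc{B}^{\text{op}},\Sp)$ is exactly $\nu_a$ applied to the image $G$ of $F$ in $\fun^{\operatorname{Ex}}(\mc{A}\widehat{\otimes}\mc{B}^{\text{op}},\Sp)$. Hence $F$ preserves compacts if and only if $\nu_a(G)$ is a compact object of $\operatorname{Ind}\mc{B}$ for every $a\in\mc{A}$, which is precisely the defining condition for $G$ to be right-compact. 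Combining the two steps gives $\fun^{\operatorname{Ex}}(\mc{A},\mc{B})\simeq\fun^{\operatorname{Ex},\operatorname{RC}}(\mc{A}\widehat{\otimes}\mc{B}^{\text{op}},\Sp)$.

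The part I expect to be fiddly — and essentially the only content beyond invoking the cited structural facts — is this last identification: tracking $\nu_a$ through the chain of equivalences and checking it agrees with ``evaluate at $a$ and view the result in $\operatorname{Ind}\mc{B}$''. An alternative, slightly slicker packaging avoids $\mc{P}\text{r}^L_{\text{St}}$ altogether: work inside $\cat^{\operatorname{perf}}_\infty$, use its rigidity ($\mc{C}^\vee\simeq\mc{C}^{\text{op}}$, via Theorem \ref{morita_thm}) to obtain $\fun^{\operatorname{Ex}}(\mc{A},\mc{B})\simeq\mc{A}^\vee\widehat{\otimes}\mc{B}\simeq\mc{A}^\vee\widehat{\otimes}(\mc{B}^{\text{op}})^\vee\simeq(\mc{A}\widehat{\otimes}\mc{B}^{\text{op}})^\vee\simeq\fun^{\operatorname{Ex}}(\mc{A}\widehat{\otimes}\mc{B}^{\text{op}},\Sp^\omega)$, and then identify $\fun^{\operatorname{Ex}}(\mc{A}\widehat{\otimes}\mc{B}^{\text{op}},\Sp^\omega)$ with $\fun^{\operatorname{Ex},\operatorname{RC}}(\mc{A}\widehat{\otimes}\mc{B}^{\text{op}},\Sp)$ by observing that an exact functor $F$ to $\Sp$ is right-compact exactly when it factors through $\Sp^\omega\hookrightarrow\Sp$ (its values on the generators $a\widehat{\otimes}b$ are the $\nu_a(F)(b)$, and $\Sp^\omega$ is closed under finite colimits). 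This is the same computation, with the Ind-bookkeeping absorbed into the rigidity statement; since this is Corollary 2.16 of \cite{BGT}, in the paper one may simply cite it, but the above is how I would reconstruct it.
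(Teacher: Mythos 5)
Your first argument is correct, and it is essentially the argument behind the cited result: the paper gives no proof of this corollary, quoting \cite[Cor. 2.16]{BGT} directly, and the proof there runs exactly along your lines, identifying $\fun^{\operatorname{Ex}}(\mc{A},\operatorname{Ind}\mc{B})\simeq\fun^{L}(\operatorname{Ind}\mc{A},\operatorname{Ind}\mc{B})\simeq\fun^{\operatorname{Ex}}(\mc{A}\widehat{\otimes}\mc{B}^{\text{op}},\Sp)$ and then matching the compact-object-preserving functors with the right-compact ones; the ``fiddly'' unwinding you flag does go through, since under the currying the object $F|_{\mc{A}}(a)\in\operatorname{Ind}\mc{B}\simeq\fun^{\operatorname{Ex}}(\mc{B}^{\text{op}},\Sp)$ is precisely $\nu_a(G)$.

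Your ``slicker packaging,'' however, is wrong and should be struck. Theorem~\ref{morita_thm} gives a closed symmetric monoidal structure on $\cat^{\operatorname{perf}}_\infty$, not rigidity: a general $\mc{C}\in\cat^{\operatorname{perf}}_\infty$ is not dualizable (the dualizable objects are the smooth and proper ones), and $\fun^{\operatorname{Ex}}(\mc{C},\Sp^\omega)$ is not $\mc{C}^{\text{op}}$ in general --- this is exactly the ``subtlety'' warned about in the remark following Example~\ref{dual_cat}, and the whole point of Prop.~\ref{koszul_duality_categories} is that $D(\operatorname{LMod}^{\operatorname{perf}}_A)$ is $\operatorname{LMod}^{S\text{-comp}}_A$ rather than an opposite category of perfect modules. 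The second error is the identification of right-compactness with factoring through $\Sp^\omega$: right-compactness of $F\in\fun^{\operatorname{Ex}}(\mc{A}\widehat{\otimes}\mc{B}^{\text{op}},\Sp)$ requires each $\nu_a(F)$ to be a compact object of $\operatorname{Ind}\mc{B}\simeq\fun^{\operatorname{Ex}}(\mc{B}^{\text{op}},\Sp)$, i.e.\ to lie in $\mc{B}$, which is not the same as the spectra $F(a\widehat{\otimes}b)$ being compact. Concretely, take $\mc{A}=\Sp^\omega$ and $\mc{B}=\operatorname{LMod}^{\operatorname{perf}}_{H\Z}$: the corollary gives $\fun^{\operatorname{Ex,RC}}(\mc{A}\widehat{\otimes}\mc{B}^{\text{op}},\Sp)\simeq\fun^{\operatorname{Ex}}(\mc{A},\mc{B})\simeq\mc{B}$, whereas $\fun^{\operatorname{Ex}}(\mc{A}\widehat{\otimes}\mc{B}^{\text{op}},\Sp^\omega)\simeq\fun^{\operatorname{Ex}}(\mc{B}^{\text{op}},\Sp^\omega)$ consists of the $H\Z$-modules whose underlying spectrum is compact, and this is trivial since a nonzero generalized Eilenberg--MacLane spectrum is never a compact spectrum. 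So the two proposed routes are not ``the same computation'': only the $\operatorname{Ind}$-theoretic one is valid, and the asymmetry it records (right-compactness versus passing to opposites) is precisely what the paper's main theorem exploits.
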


\begin{example}\label{dual_cat}
We give the most important example of the process above: we compute the \textit{left dual} (\ref{dualizable}) of a small, stable, idempotent complete category, $\mc{C} \in \cat^{\operatorname{perf}}_\infty$. The dual of this is computed as $\fun^{\operatorname{Ex}}(\mc{C}, \operatorname{Sp}^\omega)$, but we would like a more explicit description. There is a map
\[
\fun^{\operatorname{Ex}}(\mc{C}, \Sp^\omega) \to \fun^{\operatorname{Ex}}(\mc{C}, \fun^{\operatorname{Ex}}(\Sp^{\omega, \text{op}}, \Sp)) \simeq \fun^{\operatorname{Ex}}(\mc{C}, \Sp)
\]
where the final equivalence follows from the fact that 
\begin{align*}
\fun^{\operatorname{Ex}}(\Sp^{\omega, \text{op}}, \Sp) &\simeq \fun^L (\operatorname{Ind}(\Sp^{\omega, \text{op}}), \Sp) \simeq \fun^L(\Sp^\text{op}, \Sp) \\
&\simeq \fun^R (\Sp, \Sp^{\text{op}})^{\text{op}} \simeq \Sp
\end{align*}

Furthermore, 
\[
\fun^{\operatorname{Ex}}(\mc{C}, \Sp) \simeq \fun^L(\operatorname{Ind}(\mc{C}), \Sp) \simeq \operatorname{Ind}(\mc{C})^{\text{op}}
\]

Let $(\mu^\omega_c)^\ast : \operatorname{Ind}(\mc{C})^{\text{op}} \to \operatorname{Sp}$ be the map induced by $\mu^\omega_c : \operatorname{Sp}^\omega \to \mc{C}$ via
\[
\operatorname{Ind}(\mc{C})^{\text{op}} \simeq \fun^{\text{Ex}}(\mc{C}, \operatorname{Sp}) \to \fun^{\operatorname{Ex}}(\operatorname{Sp}^\omega, \Sp) \simeq \operatorname{Sp}
\]
 As above, we get induced maps 
\[
\nu_c : \fun^{\operatorname{Ex}}(\mc{C}, \Sp^\omega) \to \fun^{\operatorname{Ex}}(\mc{C}, \Sp) \xrightarrow{(\mu^\omega_c)^\ast}  \fun^{\operatorname{Ex}}(\Sp^{\omega}, \Sp) \simeq \Sp
\]

Then, then $\fun^{\operatorname{Ex}}(\mc{C}, \Sp^\omega)$ corresponds exactly to those elements of $\operatorname{Ind}(\mc{C})^{\text{op}}$ that map to compact spectra under \textit{all} maps $\nu_c$. That is, for a particular $c \in \mc{C}$, we have a pullback along inclusions of $\infty$-categories
\[
\xymatrix{
\mc{E}_c \ar[d] \ar@{^{(}->}[r] & \operatorname{Ind}(\mc{C})^{\text{op}} \ar[d]^{(\mu^\omega_c)^\ast}\\
\Sp^\omega \ar@{^{(}->}[r] & \Sp 
}
\]
and 
\[
\fun^{\operatorname{Ex}} (\mc{C}, \Sp^\omega) \simeq \bigcap_{c \in \mc{C}} \mc{E}_c
\]
where the intersection is taken inside of $\operatorname{Ind}(\mc{C})^{\text{op}}$. 

That is, $\fun^{\text{Ex}}(\mc{C}, \Sp^\omega)$ are exactly those objects that when considered as spectra via $\mu^\omega_c$ are compact. 
\end{example}

\begin{rmk}
There is a subtlety here which confused the author for some time. A cursory glance would make it appear that the dual of a category is in fact just the \textit{opposite} category. Indeed, this would be true for presentable $\infty$-categories with their symmetric monoidal product in the theorem above \cite{LurieHA}, \cite{BGT}. However, the presence of right-compactness alters this. 
\end{rmk}

\section{The Theorem}

The main step to the proof of the duality theorem is proving a duality between module categories of Koszul dual algebras. However, there are finicky technical details to pin down and we must be somewhat careful. In particular, since we are not (necessarily) dealing with $\mbf{E}_\infty$-rings we have to distinguish between left and right modules. This in fact ends up giving a kind of asymmetry in specific examples where we merely have associativity and no higher coherence. 

Once we have produced a duality between module categories, we need to demonstrate that $\thh$ is a symmetric monoidal functor from the $\infty$-category of perfect stable $\infty$-categories to the $\infty$-category of spectra.  It is shown to be symmetric monoidal in \cite{BGT_TC}.  

Finally, we need to at least state a Morita invariance result that guarantees what we normally think of as topological Hochschild homology of a ring is the same as taking topological Hochschild homology of a particular symmetric monoidal $\infty$-category. 

\subsection{Duality of Module Categories}

We have been driving home the point that the Koszul duality is really encoded as a relationship between underlying module categories, at least for stable categories. We have previously stated this as a relationship between certain thick subcategories of modules. Below we prove that the categories of modules are in some sense dual in the $\infty$-category of perfect stable $\infty$-categories.

The following illustrates why we must be careful: our way forward is not as easy as we might hope.  

\begin{example}
Our canonical example is slightly confusing. For the ring spectrum $A = \Sigma^\infty_+ \Omega X$ and $B = DX$, the latter is $\mbf{E}_\infty$ but the former is only $\mbf{E}_1$.  Thus, $\operatorname{LMod}_B \simeq \operatorname{RMod}_B$, but $\operatorname{LMod}_A$ is \textit{not} necessarily equivalent to $\operatorname{RMod}_A$.  This goes against the expectation that the Koszul dual of something that is $\mbf{E}_1$ would also be (exactly) $\mbf{E}_1$. Furthermore, there is an anti-involution on $\Sigma^\infty_+ \Omega X$ which can be used to turn left modules into right modules. This further complicates issues. 

 Examples such as this lead us to be especially careful when considering left and right modules and opposite categories thereof. 
\end{example}

We also need to be careful about what we mean by ``duality'' since our dualities will have handedness; in general, there is no reason to expect $\thh(\mc{C})$ to be a dualizable object in $\operatorname{Sp}$. When one thinks of duality, often one thinks of mapping an object into some unit object (e.g. taking duals of vector spaces). However, it is more productive to think abstractly in terms of unit and counit maps. This will allow us to distinguish the handedness of dualities. 

Below we state the definition of duality in $\infty$-categories \cite{LurieHA}. Note that a symmetric monoidal $\infty$-category has a homotopy category, $\operatorname{Ho}(\mc{C})$ that is symmetric monoidal in the usual sense, and that duality in $\mc{C}$ is determined by the homotopy category. 

\begin{defn}\label{dualizable}
Let $\mc{C}^{\otimes}$ be a symmetric monoidal $\infty$-category with underlying $\infty$-category $\mc{C}^{\otimes}_{\langle 1 \rangle} \simeq \mc{C}$ and unit $U \in \mc{C}$. Let $C \in \mc{C}$ be an object. Then $C$ is \textbf{left dualizable} if there is a $C^\vee \in \mc{C}$ and maps $\tau : C \otimes C^\vee \to U$ and $\chi : U \to C^\vee \otimes C$ such that
\[
C \xrightarrow{\id \otimes \chi} C \otimes C^\vee \otimes C \xrightarrow{\tau \otimes \id}  C
\]
is the identity in $\operatorname{Ho}(\mc{C})$. 

Similarly $C$ is \textbf{right dualizable} if
\[
C^\vee \xrightarrow{\chi \otimes \id}  C^\vee \otimes C\otimes C^\vee \xrightarrow{\id \times \tau}  C^\vee
\]
is an equivalence in $\operatorname{Ho}(\mc{C})$. 

We call $C$ \textbf{dualizable} if it is both left and right dualizable. 
\end{defn}

\begin{rmk}
In a symmetric monoidal category with internal function objects \cite[Def. 4.2.1.28]{LurieHA}, we can easily write down a left dual. Let $A$ be an object in such a category and $\mbf{1}$ the unit for the symmetric monoidal structure. The dual is $\operatorname{Map}(A, \mbf{1})$. The map $\tau: \operatorname{Map}(A, \mbf{1}) \otimes A \to \mbf{1}$ is given by evaluation, and $\chi: \mbf{1} \to \operatorname{Map}(A, \mbf{1}) \otimes A$ is given by the adjoint to $\operatorname{Map}$. We often use the notation $DA$ for the mapping space from $A$ into the unit object. 
\end{rmk}

\begin{rmk}
Because of various assymetries that arise, the concept of dualizability will not play a large role in the sequel. There is a concrete characterization of dualizable objects of $\mc{C}\text{at}^{\text{perf}}_\infty$ in \cite{BGT}. 
\end{rmk}

We now apply Ex.~\ref{dual_cat} to $\operatorname{LMod}^{\operatorname{perf}}_A$ to find the dual. 

\begin{prop}
The left dual of $\operatorname{LMod}^{\operatorname{perf}}_A$ in $\cat^{\operatorname{perf}}_\infty$ is $\operatorname{LMod}^{S\text{-comp}}_A$, the $A$-modules that are compact when considered as $S$-modules.  
\end{prop}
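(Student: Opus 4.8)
The plan is to apply Example~\ref{dual_cat} directly with $\mc{C} = \operatorname{LMod}^{\operatorname{perf}}_A$ and then make the resulting subcategory of $\operatorname{Ind}(\mc{C})^{\text{op}}$ explicit. First I would recall that $\operatorname{LMod}_A$ is compactly generated by $A$, with compact objects exactly the perfect modules, so $\operatorname{Ind}(\operatorname{LMod}^{\operatorname{perf}}_A) \simeq \operatorname{LMod}_A$ and the ambient category in Example~\ref{dual_cat} is (the opposite of) $\operatorname{LMod}_A$. Under this identification I would track the maps $\mu^\omega_c$, $(\mu^\omega_c)^\ast$, $\nu_c$ of that example: since $\mu^\omega_c\colon \Sp^\omega \to \operatorname{LMod}^{\operatorname{perf}}_A$ sends $S \mapsto c$, the composite $\nu_c$ is just ``evaluate the corepresented functor at $c$,'' i.e.\ $\nu_c$ corresponds to $M \mapsto \operatorname{RHom}_A(c, M) \in \Sp$. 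Hence the left dual $\fun^{\operatorname{Ex}}(\operatorname{LMod}^{\operatorname{perf}}_A, \Sp^\omega) \simeq \bigcap_{c} \mc{E}_c$ is precisely the full subcategory of $M \in \operatorname{LMod}_A$ for which $\operatorname{RHom}_A(c, M)$ is a compact spectrum for every $c \in \operatorname{LMod}^{\operatorname{perf}}_A$.

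The remaining step is to show that this condition is equivalent to $M$ being compact as an $S$-module, i.e.\ $M \in \operatorname{LMod}^{S\text{-comp}}_A$. For the forward implication I would feed in $c = A$: since $\operatorname{RHom}_A(A, M) \simeq M$ and the forgetful functor $\operatorname{LMod}_A \to \Sp$ is restriction along $S \to A$, compactness of $\operatorname{RHom}_A(A, M)$ in $\Sp$ is exactly compactness of $M$ over $S$. For the converse, suppose $M$ is $S$-compact and consider the full subcategory of $\operatorname{LMod}^{\operatorname{perf}}_A$ on those $c$ with $\operatorname{RHom}_A(c, M) \in \Sp^\omega$; because $\operatorname{RHom}_A(-, M)$ is exact and $\Sp^\omega \subseteq \Sp$ is a thick subcategory, this subcategory is closed under (co)fibers, shifts and retracts, and it contains $A$, hence it is all of $\operatorname{LMod}^{\operatorname{perf}}_A$. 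This gives both inclusions, so the left dual is $\operatorname{LMod}^{S\text{-comp}}_A$.

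The step I expect to be the main obstacle is the bookkeeping in the first paragraph: pinning down the identification $\nu_c \simeq \operatorname{RHom}_A(c, -)$ from the chain of abstract equivalences in Example~\ref{dual_cat}, and keeping the opposite categories straight (the Spanier--Whitehead self-duality $\Sp^\omega \simeq (\Sp^\omega)^{\text{op}}$ and the contravariant equivalence $(\operatorname{LMod}^{\operatorname{perf}}_A)^{\text{op}} \simeq \operatorname{RMod}^{\operatorname{perf}}_A$ are both implicit here). Once that identification is in hand, the thick-subcategory argument is routine, and it is cleanest to carry out the whole verification in $\operatorname{Ho}$, using the observation preceding Definition~\ref{dualizable} that dualizability and duals in a symmetric monoidal $\infty$-category are detected on the homotopy category.
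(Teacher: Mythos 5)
Your proposal is correct and follows essentially the same route as the paper: specialize Example~\ref{dual_cat} to $\mc{C}=\operatorname{LMod}^{\operatorname{perf}}_A$, identify the maps $\nu_c$ on the ambient module category, and observe that compactness of the image under every $\nu_c$ is equivalent to $S$-compactness. The only difference is that you make explicit the two-step verification (taking $c=A$, then the thick-subcategory argument for the converse) that the paper simply asserts, and you rightly flag the left/right (op) bookkeeping in the identification of $\fun^{\operatorname{Ex}}(\operatorname{LMod}^{\operatorname{perf}}_A,\Sp)$ with a module category, which the paper itself glosses over.
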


\begin{proof}
The functors $\mu^\omega_M : \Sp^\omega \to \operatorname{LMod}^{\operatorname{perf}}_A$ are the unique colimit preserving functors that take the sphere spectrum $S$ to an $A$-module $M$. We get, as in the example, a pullback along inclusions
\[
\xymatrix{
\mc{E}_M \ar[d]\ar@{^{(}->}[r] & \operatorname{LMod}^{\text{op}}_A \ar[d]^{M=(\mu^\omega_M)^\ast} \\
\Sp^\omega \ar@{^{(}->}[r] & \Sp
}
\]
and
\[
\fun^{\operatorname{Ex}}(\operatorname{LMod}^{\text{op}}_A, \Sp^\omega) \simeq \bigcap_M \mc{E}_M
\]
Before we say anything concrete, we need to know the nature of the map $M: \operatorname{LMod}^{\text{op}}_A \to \operatorname{Sp}$. As in Ex.~\ref{dual_cat}, the map comes from 
\[
\operatorname{LMod}^{\text{op}}_A \simeq \operatorname{Ind}(\operatorname{LMod}^{\text{perf}}_A)^{\text{op}} \simeq \fun^{\operatorname{Ex}}(\operatorname{LMod}^{\operatorname{perf}}_A, \Sp) \xrightarrow{(\mu_M)^\ast} \fun^{\operatorname{Ex}} (\Sp^\omega, \Sp) \simeq \Sp
\]
That is, we are considering an $A$-modules as an $S$-module via
\[
\Sp^\omega \xrightarrow{M} \operatorname{LMod}^{\text{op}}_A \to \Sp.
\]
The $S$-module resulting from the above pullback will be perfect exactly when $M \in \operatorname{LMod}_A$ is perfect as an $S$-module.  That is
\[
\mc{E}_M \simeq \operatorname{LMod}^{S-\text{perf}}_A
\]
and so
\[
\fun^{\text{Ex}}(\operatorname{LMod}^{\text{op}}_A, \Sp^\omega) \simeq \bigcap_M \operatorname{LMod}^{S-\text{perf}}_A \simeq \operatorname{LMod}^{S-\text{perf}}_A. 
\]
This completes the proof. 
\end{proof}

We will need the following result to induce equivalences between module categories:

\begin{lem}\label{thick_and_S_comp}
Let $A$ be an augmented $\mbf{E}_1$-spectrum such that $A$ is small as an $S$-module. As above let $\mc{T}_A (S)$ denote the thick subcategory generated by $S$ in $A$. Let $\operatorname{LMod}^{S\text{-comp}}_A$ denote the left $A$-modules that are compact as $S$-modules. Then
\[
\mc{T}_A (S) \simeq \operatorname{LMod}^{S\text{-comp}}_A
\]
\end{lem}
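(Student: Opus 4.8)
The plan is to show the two subcategories agree by a double inclusion. Both $\mc{T}_A(S)$ and $\operatorname{LMod}^{S\text{-comp}}_A$ are full subcategories of $\operatorname{LMod}_A$, so it suffices to check they contain the same objects. The key observation is that the forgetful functor $U : \operatorname{LMod}_A \to \operatorname{Sp}$ (restriction of scalars along the augmentation $A \to S$, equivalently along the unit $S \to A$) is exact and preserves retracts, and that $U(S) = S$ is compact in $\operatorname{Sp}$. Thus $U$ carries the thick subcategory generated by $S$ in $\operatorname{LMod}_A$ into the thick subcategory generated by $S$ in $\operatorname{Sp}$, which is exactly $\operatorname{Sp}^\omega$. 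This immediately gives the inclusion $\mc{T}_A(S) \subseteq \operatorname{LMod}^{S\text{-comp}}_A$: anything finitely built from $S$ over $A$ is finitely built from $S$ over $S$ after forgetting, hence compact as an $S$-module.

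For the reverse inclusion, $\operatorname{LMod}^{S\text{-comp}}_A \subseteq \mc{T}_A(S)$, I would exploit the hypothesis that $A$ is small as an $S$-module, i.e. $A \in \operatorname{Sp}^\omega$, equivalently $A \in \mc{T}_S(S)$. The free-forgetful adjunction $A \wedge (-) : \operatorname{Sp} \rightleftarrows \operatorname{LMod}_A : U$ has the property that $A \wedge (-)$ sends $S$ to $A$ and is exact and coproduct-preserving; hence it sends $\mc{T}_S(S) = \operatorname{Sp}^\omega$ into $\mc{T}_A(A) = \operatorname{LMod}^{\operatorname{perf}}_A$, and more relevantly sends $S$ (the generator) to $A$, which — since $A$ is small over $S$ — already lies in $\mc{T}_A(S)$. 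Now take any $M \in \operatorname{LMod}^{S\text{-comp}}_A$. The counit $A \wedge U(M) \to M$ is a map in $\operatorname{LMod}_A$ whose source lies in $\mc{T}_A(S)$ (because $U(M) \in \operatorname{Sp}^\omega$ and $A \wedge (-)$ applied to a finite cell $S$-spectrum is a finite iterated (co)fiber/retract of copies of $A \in \mc{T}_A(S)$). The standard way to conclude is to show $M$ is a retract of $A \wedge U(M)$, or at least finitely built from such free modules; since $M$ is itself compact in $\operatorname{LMod}_A$ (being $S$-compact implies $A$-compact, as $A$-module maps are detected through $S$-module maps via a bar/cobar resolution — or more cleanly, because $\mc{T}_A(S) \subseteq \mc{T}_A(A)$ forces every object of $\operatorname{LMod}^{S\text{-comp}}_A$ to be compact), a finite stage of the bar resolution $\cdots \to A \wedge A \wedge U(M) \to A \wedge U(M) \to M$ splits off $M$ as a retract. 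Each term $A^{\wedge n} \wedge U(M)$ lies in $\mc{T}_A(S)$ because $A \in \mc{T}_S(S)$, so its finite totalization, and hence the retract $M$, lies in $\mc{T}_A(S)$.

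A cleaner packaging of the reverse inclusion, which I would actually write up, is via the Koszul/double-centralizer picture already in the paper: one has $S \in \mc{T}_S(B)$ iff $A \in \mc{T}_A(S)$ (the cited \cite[Pr.\ 4.17]{DGI}), but more directly, the localizing subcategory $\mc{L}_A(S)$ generated by $S$ contains $A$ precisely when $A$ is small over $S$ — and then $\mc{L}_A(S) = \operatorname{LMod}_A$, so every $A$-module is built (under arbitrary colimits) from $S$. Restricting to compact objects on both sides, $\mc{T}_A(S) = \mc{L}_A(S)^{\operatorname{perf}} = (\operatorname{LMod}_A)^{\operatorname{perf}} = \operatorname{LMod}^{\operatorname{perf}}_A$, and then one only needs $\operatorname{LMod}^{\operatorname{perf}}_A = \operatorname{LMod}^{S\text{-comp}}_A$ under the smallness hypothesis, which follows because $S$-compactness and $A$-compactness coincide when $A$ is $S$-compact (the forgetful functor both preserves and, using that $A$ is a finite $S$-cell spectrum, reflects compactness).

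The main obstacle is the reverse inclusion, and specifically justifying that $S$-compactness of an $A$-module implies it is \emph{finitely} built from $S$ as an $A$-module rather than merely built under infinite colimits; the smallness of $A$ over $S$ is exactly what is needed to truncate the bar resolution to a finite stage, and I expect the bookkeeping of that finite truncation (showing the relevant partial totalization retracts onto $M$) to be the technical heart of the argument. The forward inclusion is essentially formal from exactness of the forgetful functor.
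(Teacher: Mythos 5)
Your easy inclusion $\mc{T}_A(S) \subseteq \operatorname{LMod}^{S\text{-comp}}_A$ is fine (exactness of the forgetful functor, $S$ compact in $\operatorname{Sp}$), but the reverse inclusion rests on a step that is not merely unproved but false. You claim that $S$-compactness of $M$ implies compactness of $M$ in $\operatorname{LMod}_A$, justified either by a vague "detection" through the bar resolution or "more cleanly" by $\mc{T}_A(S) \subseteq \mc{T}_A(A) = \operatorname{LMod}^{\operatorname{perf}}_A$; the second packaging likewise asserts that the forgetful functor reflects compactness when $A$ is $S$-finite, and ends by identifying $\mc{T}_A(S)$ with $\operatorname{LMod}^{\operatorname{perf}}_A$. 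All of these require $S$ to be a perfect $A$-module, and that fails in the paper's motivating example: for $A = DX$ with $X$ a finite simply connected complex, $S \sma^{\mbf{L}}_{DX} S$ is (dual to) $\Sigma^\infty_+ \Omega X$, which is not a finite spectrum, whereas base change along the augmentation would carry any perfect $DX$-module to a finite spectrum. So $S$ is $S$-compact but not compact in $\operatorname{LMod}_{DX}$, the truncation of the bar resolution has no compactness to power it, and if your argument did go through it would prove $\operatorname{LMod}^{S\text{-comp}}_A \subseteq \operatorname{LMod}^{\operatorname{perf}}_A$ --- exactly the symmetry that the lemma (and the asymmetry remarks after Theorem~\ref{main_thm}) is designed to avoid. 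Separately, the claim that smallness of $A$ over $S$ already gives $A \in \mc{T}_A(S)$ (and the "precisely when" characterization of $A \in \mc{L}_A(S)$) is not formal: $A = S \times S$ with a projection augmentation is small over $S$ but does not lie in the thick or localizing subcategory generated by the augmentation module, so this input has to come from the Koszul-duality/dc-completeness context rather than from finiteness alone.

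For comparison, the paper does not argue by resolutions or by transporting compactness along the forgetful functor at all: it sandwiches $\operatorname{LMod}^{\operatorname{perf}}_A \subset \mc{T}_A(S) \subset \operatorname{LMod}^{S\text{-comp}}_A \subset \operatorname{LMod}_A$, uses $\operatorname{Ind}(\operatorname{LMod}^{\operatorname{perf}}_A) \simeq \operatorname{LMod}_A$, and applies Prop.~\ref{thick_equivalence} to squeeze the middle terms; the only ingredient of yours that matches it is the containment $A \in \mc{T}_A(S)$ underlying the first inclusion. If you want to salvage your bar-resolution route, the finite-stage splitting must be extracted from finiteness of $A$ and of the underlying spectrum of $M$ together with a connectivity/convergence argument (as in the cited treatments of Koszul duality), not from compactness of $M$ in $\operatorname{LMod}_A$, which is unavailable for precisely the modules the lemma is about.
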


This actually follows from a slightly more general proposition. 

\begin{prop}\label{thick_equivalence}
Let $\mc{C}$ be a small, stable, idempotent complete $\infty$-category and let $\mc{D}$ be an $\infty$-category generated by $\mc{C}$, i.e. $\mc{D} \simeq \operatorname{Ind}(\mc{C})$. Suppose $\mc{C}' \in \mc{C}\text{at}^{\text{perf}}_\infty$ sits in the following string of fully faithful inclusions
$\mc{C} \subset \mc{C}' \subset \mc{D}$.
Then $\mc{C}$ is categorically equivalent to $\mc{C}'$. 
\end{prop}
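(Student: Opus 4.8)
The plan is to identify both $\mc{C}$ and $\mc{C}'$ with the subcategory $\mc{D}^\omega$ of compact objects of $\mc{D}$, after which the conclusion follows formally from the given inclusion $\mc{C}\subset\mc{C}'$. The first identification is immediate: since $\mc{D}\simeq\operatorname{Ind}(\mc{C})$ and $\mc{C}$ is idempotent complete, the compact objects of $\operatorname{Ind}(\mc{C})$ are exactly $\operatorname{Idem}(\mc{C})\simeq\mc{C}$ by \cite[5.4.2.4]{LurieHTT}, so $\mc{C}\simeq\mc{D}^\omega$ as a full subcategory of $\mc{D}$. Hence it suffices to show that every object of $\mc{C}'$ is a compact object of $\mc{D}$: this gives $\mc{C}\subseteq\mc{C}'\subseteq\mc{D}^\omega=\mc{C}$ as full subcategories of $\mc{D}$, and all inclusions are then equalities, yielding the asserted categorical equivalence.

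To establish the compactness, I would compare the two Ind-completions in play. The fully faithful inclusion $\mc{C}\hookrightarrow\mc{C}'$ Ind-extends, by \cite[5.3.5.11]{LurieHTT}, to a fully faithful colimit-preserving functor $F:\mc{D}=\operatorname{Ind}(\mc{C})\hookrightarrow\operatorname{Ind}(\mc{C}')$; and since $\mc{D}$ admits filtered colimits, the inclusion $\mc{C}'\hookrightarrow\mc{D}$ extends uniquely to a colimit-preserving functor $G:\operatorname{Ind}(\mc{C}')\to\mc{D}$ restricting to the inclusion on $\mc{C}'$. The composite $G\circ F$ is colimit-preserving and restricts to the identity on the compact generators $\mc{C}\subset\mc{D}$, hence $G\circ F\simeq\operatorname{id}_{\mc{D}}$ by uniqueness of colimit-preserving extensions \cite[5.3.5.10]{LurieHTT}. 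If one can likewise show that $F\circ G$ restricts to the Yoneda embedding on the compact generators $\mc{C}'\subset\operatorname{Ind}(\mc{C}')$, then $F\circ G\simeq\operatorname{id}_{\operatorname{Ind}(\mc{C}')}$, so $F$ is an equivalence $\mc{D}\simeq\operatorname{Ind}(\mc{C}')$; passing to compact objects and using idempotent completeness of $\mc{C}'$ once more gives $\mc{C}\simeq\mc{D}^\omega\simeq\operatorname{Ind}(\mc{C}')^\omega\simeq\mc{C}'$, compatibly with the inclusion $\mc{C}\hookrightarrow\mc{C}'$.

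The main obstacle is precisely this last point: the essential surjectivity of $F$, equivalently the statement that each $X\in\mc{C}'$ is compact in $\mc{D}$. Concretely, writing $X$ in $\mc{D}=\operatorname{Ind}(\mc{C})$ as a filtered colimit $X\simeq\operatorname{colim}_i c_i$ with $c_i\in\mc{C}\subseteq\mc{C}'$, compactness of $X$ says that $\operatorname{id}_X$ factors through some $c_i$, exhibiting $X$ as a retract of an object of $\mc{C}$; idempotent completeness of $\mc{C}$ then forces $X\in\mc{C}$. The crux is to verify this factorization, and this is where the smallness of $\mc{C}'$ (together with stability) must be used essentially — controlling the filtered presentations of objects of $\mc{C}'$ inside $\operatorname{Ind}(\mc{C})$ so that no object of $\mc{C}'$ can fail to be compact. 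Everything else in the argument is formal manipulation of Ind-completions and idempotent completions, so I would spend the bulk of the effort isolating and proving this finiteness statement.
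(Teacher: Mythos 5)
Your outline reproduces the skeleton of the paper's own argument --- pass to Ind-completions, use \cite[5.3.5.11]{LurieHTT} for fully faithfulness, and recover $\mc{C}$ and $\mc{C}'$ as the compact objects via \cite[5.4.2.4]{LurieHTT} together with idempotent completeness --- but it stops exactly where the content lies. The statement you defer, that every object of $\mc{C}'$ is compact in $\mc{D}$ (equivalently, that $F:\operatorname{Ind}(\mc{C})\to\operatorname{Ind}(\mc{C}')$ is essentially surjective), is not a technical verification to be supplied later: it \emph{is} the proposition, since once it is granted your first paragraph ($\mc{C}\subseteq\mc{C}'\subseteq\mc{D}^\omega\simeq\mc{C}$) finishes the proof in one line. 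So what you have written is a correct reduction and plan, not a proof. For comparison, this is precisely the point at which the paper's proof says ``it is easy to see that all of these maps are essentially surjective as well'' and moves on; you have located the crux correctly, but neither your proposal nor that sentence supplies an argument for it.

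Moreover, the program you propose --- extracting the needed finiteness from smallness, stability and idempotent completeness of $\mc{C}'$ --- cannot succeed, because those hypotheses do not imply it. Take $\mc{C}=\Sp^\omega$, $\mc{D}=\Sp\simeq\operatorname{Ind}(\Sp^\omega)$, and let $\mc{C}'$ be the thick subcategory of $\Sp$ generated by $\Sp^\omega$ together with $H\mbf{F}_p$. Then $\mc{C}'$ is (essentially) small, stable and idempotent complete and sits in fully faithful inclusions $\Sp^\omega\subset\mc{C}'\subset\Sp$, yet $H\mbf{F}_p$ is not compact in $\Sp$: the identity of $H\mbf{F}_p$ does not factor through any finite spectrum in a filtered presentation, so the retract argument you sketch has no chance, the functor $\operatorname{Ind}(\mc{C})\to\operatorname{Ind}(\mc{C}')$ is not essentially surjective, and the inclusion $\mc{C}\subset\mc{C}'$ is not an equivalence. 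In other words, the missing step is not merely unproven but unprovable from the stated hypotheses; what is really needed is the additional input that $\mc{C}'$ lands in $\mc{D}^\omega$ (in the paper's application, that the intermediate categories $\mc{T}_A(S)$ and $\operatorname{LMod}^{S\text{-comp}}_A$ consist of perfect $A$-modules), and with that hypothesis added the conclusion follows immediately by your first paragraph, with no need for the Ind-completion machinery at all.
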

\begin{proof}
Note that $\operatorname{Ind}$ is functorial. Apply it to the sequence of inclusions above to get
\[
\operatorname{Ind}(\mc{C}) \to \operatorname{Ind}(\mc{C}') \to \operatorname{Ind}(\mc{D}).
\]
Since $\mc{D}$ is compactly generated we have $\operatorname{Ind}(\mc{D}) \simeq \mc{D}$. Furthermore, since $\mc{C}$ generates $\mc{D}$, $\operatorname{Ind}(\mc{C}) \simeq \mc{D}$ and we are left with maps
\[
\mc{D} \to \operatorname{Ind}(\mc{C}') \to \mc{D}. 
\]
all of which are fully faithful \cite[5.3.5.11]{LurieHTT}. It is easy to see that all of these maps are essentially surjective as well. Thus, $\operatorname{Ind}(\mc{C}) \simeq \operatorname{Ind}(\mc{C}') \simeq \operatorname{Ind}(\mc{D})$. Upon taking compact objects of these categories we obtain
\[
\operatorname{Ind}(\mc{C})^\omega \simeq \operatorname{Ind}(\mc{C}')^\omega \simeq \operatorname{Ind}(\mc{D})^\omega.
\]

However, $\operatorname{Ind}(\mc{C})^\omega$ models the idempotent completion of $\mc{C}$ \cite[Lem. 5.4.2.4]{LurieHTT}, and since $\mc{C}$ and $\mc{C}'$ are already idempotent complete we get $\mc{C} \simeq \mc{C}'$. 
\end{proof}

\begin{rmk}
In light of the Morita theory of \cite{BGT}, this proposition is more or less saying that all spectral categories that are contained in the perfect $A$-modules are Morita equivalent. 
\end{rmk}

We can now prove the lemma above:

\begin{proof}[Proof of Lemma]
In the setup of the lemma we have inclusions
\[
\operatorname{LMod}^{\operatorname{perf}}_A \subset \mc{T}_A (S) \subset \operatorname{LMod}^{S\text{-comp}}_A \subset \operatorname{LMod}_A
\]
We have to see that these satisfy the hypotheses of the proposition above. First, by \cite[Pr. 8.2.5.2]{LurieHA} $\operatorname{LMod}^{\operatorname{perf}}_A \simeq \operatorname{LMod}^\omega_A$ and $\operatorname{Ind}(\operatorname{LMod}^{\operatorname{perf}}_A) \to \operatorname{LMod}_A$ is a categorical equivalence. The inclusions are fully faithful by definition, being inclusions of $\infty$-subcategories. Thus, we may apply Prop. ~\ref{thick_equivalence}. Furthermore, the two middle categories are idempotent complete, so we are done.  
\end{proof}

We now have:

\begin{prop}\label{koszul_duality_categories}
In $\mc{C}\text{at}^{\operatorname{perf}}_\infty$ with the symmetric monoidal structure described above, $\operatorname{LMod}^{\operatorname{perf}}_A$ is left dual to the subcategory of $\operatorname{LMod}_A$ modules that are compact when considered as $S$-modules. Thus, when $A$ is compact as an $S$-module 
\begin{align*}
D(\operatorname{LMod}^{\operatorname{perf}}_A)  \simeq \mc{T}_{A}(S)  \simeq \operatorname{LMod}^{\operatorname{perf}}_{B^{\text{op}}}
\end{align*}
\end{prop}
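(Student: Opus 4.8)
The plan is to assemble the statement from the three ingredients already established, keeping careful track of handedness. First I would invoke the preceding proposition computing the left dual of $\operatorname{LMod}^{\operatorname{perf}}_A$ in $\cat^{\operatorname{perf}}_\infty$: we have shown
\[
D(\operatorname{LMod}^{\operatorname{perf}}_A) \simeq \fun^{\operatorname{Ex}}(\operatorname{LMod}^{\operatorname{perf}}_A, \Sp^\omega) \simeq \operatorname{LMod}^{S\text{-comp}}_A,
\]
where $\operatorname{LMod}^{S\text{-comp}}_A$ denotes the $A$-modules that are compact as $S$-modules. This is the first displayed equivalence, and it is unconditional; it is exactly the content of the proposition on the left dual of $\operatorname{LMod}^{\operatorname{perf}}_A$ and of Example~\ref{dual_cat}.

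Next I would bring in the compactness hypothesis on $A$. When $A$ is compact (``small'') as an $S$-module, Lemma~\ref{thick_and_S_comp} gives $\mc{T}_A(S) \simeq \operatorname{LMod}^{S\text{-comp}}_A$. Concatenating with the previous step yields $D(\operatorname{LMod}^{\operatorname{perf}}_A) \simeq \mc{T}_A(S)$, the middle equivalence in the displayed chain. It is worth remarking here why the hypothesis is needed: Lemma~\ref{thick_and_S_comp} rests on Proposition~\ref{thick_equivalence}, which requires the ambient string of inclusions $\operatorname{LMod}^{\operatorname{perf}}_A \subset \mc{T}_A(S) \subset \operatorname{LMod}^{S\text{-comp}}_A \subset \operatorname{LMod}_A$, and the inclusion $\mc{T}_A(S)\subset\operatorname{LMod}^{S\text{-comp}}_A$ — really the equality of these two — is precisely where $S$ being finitely built from $A$ (equivalently, $A$ finitely built from $S$, by the cited Proposition~\cite[Pr.4.17]{DGI}) enters.

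Finally I would identify $\mc{T}_A(S)$ with $\operatorname{LMod}^{\operatorname{perf}}_{B^{\text{op}}}$ using the Schwede--Shipley argument already laid out: $\mc{L}_A(S)$ is presentable, generated by the single compact object $S$, hence equivalent to $\operatorname{RMod}_R$ for $R = \operatorname{End}_A(S,S)$, and by the Koszul duality hypothesis $\operatorname{End}_A(S,S)\simeq B$. Passing to compact objects, $\mc{T}_A(S) \simeq \mc{L}_A(S)^{\operatorname{perf}} \simeq \operatorname{RMod}^{\operatorname{perf}}_B \simeq \operatorname{LMod}^{\operatorname{perf}}_{B^{\text{op}}}$, which is the last equivalence. (Alternatively one could cite Theorem~\ref{main_koszul} directly, but routing through the Schwede--Shipley description is what makes the appearance of the ``op'' transparent, and matches the way the chain is phrased.) Stringing the three equivalences together gives the displayed conclusion.

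The main obstacle — and the only non-formal point — is the handedness bookkeeping: one must check that the $S$-module structure used to define $\operatorname{LMod}^{S\text{-comp}}_A$ (via the functor $\mu^\omega_M$ of Definition~\ref{mu} and its dual) is the same $S$-module structure relevant to the thick subcategory $\mc{T}_A(S)$, and that the endomorphism ring of $S$ computed inside $\operatorname{LMod}_A$ is $B$ rather than $B^{\text{op}}$, so that the final identification lands in $\operatorname{LMod}^{\operatorname{perf}}_{B^{\text{op}}}$ and not $\operatorname{LMod}^{\operatorname{perf}}_B$. Since $B = DX$-type examples are often $\mbf{E}_\infty$ this distinction is invisible in the motivating case, but in general it is exactly the asymmetry flagged in the introduction to this section, so I would be careful to state which module structures are being used at each step rather than suppress it.
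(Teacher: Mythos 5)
Your proof is correct and follows essentially the same route as the paper: the left-dual computation giving $D(\operatorname{LMod}^{\operatorname{perf}}_A) \simeq \operatorname{LMod}^{S\text{-comp}}_A$, then Lemma~\ref{thick_and_S_comp}, then the identification $\mc{T}_A(S) \simeq \operatorname{LMod}^{\operatorname{perf}}_{B^{\text{op}}}$ (the paper cites Theorem~\ref{main_koszul}, whose Schwede--Shipley reformulation you use is presented in the paper as an equivalent alternative). One small correction to your side remark: the compactness hypothesis on $A$ is really needed for the inclusion $\operatorname{LMod}^{\operatorname{perf}}_A \subset \mc{T}_A(S)$ in the sandwich of Proposition~\ref{thick_equivalence} (so that $A$ lies in the thick subcategory generated by $S$), whereas $\mc{T}_A(S) \subset \operatorname{LMod}^{S\text{-comp}}_A$ holds unconditionally since $S$ is compact over itself.
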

\begin{proof}
We consider the computation of the dual in \ref{dual_cat} in the case where $\mc{C}$ is a module category. First, we note that we have a canonical functor $\operatorname{Sp} \to \operatorname{LMod}_A$

We get
\[
D(\operatorname{LMod}^{\operatorname{perf}}_A) \simeq \operatorname{LMod}^{S\text{-comp}}_A
\]
Prop. \ref{thick_and_S_comp} above gives
\[
\operatorname{LMod}^{S\text{-comp}}_A \simeq \mc{T}_A (S)
\]
and finally, by our assumption that $A$ and $B$ are Koszul dual, along with our computation in \ref{main_koszul} we have
\[
\mc{T}_A (S) \simeq \operatorname{LMod}^{\operatorname{perf}}_{B^{\text{op}}}. 
\]
This completes the proof. 
\end{proof}

\subsection{Topological Hochschild Homology}

We recall the definition of topological hochschild homology for a spectral category \cite{BGT_TC}. 

\begin{defn}
Let $\mc{C}$ be a spectral category. Then we define $\thh(\mc{C})$ to be the geometric realization of a simplicial object built out of mapping spectra:
\[
\thh(\mc{C}) = \left| \bigvee_{c_i \in \mc{C}} \mc{C}(c_0, c_1) \sma \cdots \sma \mc{C}(c_n,c_0) \right|
\]
\end{defn}

We may make this definition, however, it only has the correct homotopy type when smash product preserves weak equivalences. That is, we have to work in pointwise-cofibrant or \textit{flat} spectral categories, as above. If we prove our theorems for flat spectral categories, then Lem.~\ref{flat} and Lem.~\ref{flat_sym_mon} will allow us to descend to theorems about $\mc{C}\text{at}^{\text{perf}}_\infty$. 

We will need the following theorem. 

\begin{thm}\label{thh_sym_mon_spectral}
$\thh : (\mc{C}\text{at}^{\operatorname{flat}}_{\operatorname{Sp}},\sma) \to (\Sp,\sma)$ is a symmetric monoidal functor of relative categories.  
\end{thm}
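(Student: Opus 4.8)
The plan is to establish the symmetric monoidal property of $\thh$ on flat spectral categories essentially by unwinding the definition and checking that the simplicial cyclic-bar construction commutes with smash product. First I would recall that for a flat spectral category $\mc{C}$, the object $\thh(\mc{C})$ is the geometric realization of the simplicial spectrum $[n]\mapsto \bigvee_{c_0,\dots,c_n}\mc{C}(c_0,c_1)\sma\cdots\sma\mc{C}(c_n,c_0)$, and that $\thh$ of the monoidal unit (the one-object spectral category on $S$) is $S$, so $\thh$ preserves the unit up to canonical weak equivalence. The bulk of the argument is to produce, for flat $\mc{C},\mc{D}$, a natural weak equivalence $\thh(\mc{C}\sma\mc{D})\weakequiv\thh(\mc{C})\sma\thh(\mc{D})$ compatible with the associativity and symmetry constraints.

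The key step is a levelwise identification of simplicial spectra. Since the objects of $\mc{C}\sma\mc{D}$ are pairs $(c,d)$ and the mapping spectra are $\mc{C}(c,c')\sma\mc{D}(d,d')$, the $n$-simplices of the cyclic bar construction of $\mc{C}\sma\mc{D}$ are
\[
\bigvee_{(c_i,d_i)} \mc{C}(c_0,c_1)\sma\mc{D}(d_0,d_1)\sma\cdots\sma\mc{C}(c_n,c_0)\sma\mc{D}(d_n,d_0),
\]
and using that smash distributes over wedge and that $S$-module smash is symmetric monoidal, one reindexes the wedge over pairs of tuples $(c_\bullet)$ and $(d_\bullet)$ and shuffles the smash factors to get
\[
\Bigl(\bigvee_{c_i}\mc{C}(c_0,c_1)\sma\cdots\sma\mc{C}(c_n,c_0)\Bigr)\sma\Bigl(\bigvee_{d_i}\mc{D}(d_0,d_1)\sma\cdots\sma\mc{D}(d_n,d_0)\Bigr),
\]
which is exactly the $n$-simplices of $\thh(\mc{C})_\bullet\sma\thh(\mc{D})_\bullet$ (the diagonal of the bisimplicial spectrum, equivalently). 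One checks the face and degeneracy maps match under this identification, so we get an isomorphism of simplicial spectra $N^{\text{cy}}(\mc{C}\sma\mc{D})\cong \diag\bigl(N^{\text{cy}}(\mc{C})\sma N^{\text{cy}}(\mc{D})\bigr)$. Then I would invoke that geometric realization is symmetric monoidal for simplicial objects in spectra (realization commutes with smash products, since smash commutes with colimits in each variable and with the formation of diagonals of bisimplicial objects), giving $\thh(\mc{C}\sma\mc{D})\simeq\thh(\mc{C})\sma\thh(\mc{D})$. Flatness is exactly what guarantees these are the right (derived) smash products at every simplicial level, so the realization has the correct homotopy type; this is where Prop. 4.2 of \cite{BGT_TC} on pointwise-cofibrant spectral categories is used, together with the fact that $\sma$ of flat spectral categories is again flat.

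Finally, to upgrade this from ``weak equivalence of objects'' to ``symmetric monoidal functor of relative categories,'' I would organize all of the above into the framework of Thm.~\ref{sym_mon}: the comparison maps $\thh(\mc{C})\sma\thh(\mc{D})\to\thh(\mc{C}\sma\mc{D})$ and $S\to\thh(\mbf{1})$ assemble into lax (in fact strong, up to weak equivalence) symmetric monoidal structure data, and the coherence hexagon/pentagon diagrams commute on the nose at the level of the explicit wedge-of-smash formulas because they reduce to the coherence of the symmetric monoidal structure on spectra together with the combinatorics of cyclic reindexing. Since $\thh$ preserves weak equivalences between flat spectral categories (again by flatness and the fact that realization preserves levelwise weak equivalences), the pair $(\mc{C}\text{at}^{\operatorname{flat}}_{\Sp},\mc{W})\to(\Sp,\text{w.e.})$ is a symmetric monoidal functor of special relative categories. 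The main obstacle I anticipate is bookkeeping: verifying that the shuffle permutations used to interleave the $\mc{C}$- and $\mc{D}$-factors are compatible with the cyclic face maps and with the symmetry isomorphism of $\sma$, i.e.\ checking the coherence diagrams rather than just the underlying equivalence — this is routine in spirit but genuinely fiddly, and it is really the content of \cite{BGT_TC} that we are quoting here, so in the writeup I would state the levelwise identification carefully and then cite \cite{BGT_TC} for the coherence.
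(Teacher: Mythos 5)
Your proposal is essentially the paper's argument: you identify the cyclic bar construction of $\mc{C}\sma\mc{D}$ levelwise with the diagonal of $N^{\text{cy}}(\mc{C})\sma N^{\text{cy}}(\mc{D})$ via the shuffle of smash factors and the isomorphism $\mc{C}(c_0,c_1)\sma\mc{D}(d_0,d_1)\cong(\mc{C}\sma\mc{D})(c_0\sma d_0,c_1\sma d_1)$, and then use that geometric realization commutes with smash products (the paper cites \cite[X.1.3.(iv)]{EKMM} for exactly this), with flatness ensuring the correct homotopy type and the coherence/relative-category packaging deferred to \cite{BGT_TC} as in the paper. This matches the paper's proof, so no further comparison is needed.
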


\begin{proof}
As indicated in \cite{BGT_TC} we use the shuffle product. The shuffle maps are
\begin{equation}\label{shuffle}
\thh(\mc{C}_1) \sma \cdots \sma  \thh(\mc{C}_n) \to \thh(\mc{C}_1 \sma \cdots \sma \mc{C}_n)
\end{equation}
where we consider the left as a realization (diagonal) of an $n$-simplicial object in spectra. The maps on $k$-simplices look like
\begin{align*}
&\bigvee_{c_{i,1} \in \mc{C}_1} \mc{C}_1(c_{0,1},c_{1,1}) \sma \cdots \sma \mc{C}(c_{k,1}, c_{0,1}) \sma \bigvee_{c_{i,2} \in \mc{C}_2} \mc{C}_1(c_{0,2},c_{1,2}) \sma \cdots \sma \mc{C}(c_{k,2}, c_{0,2})\\
&\cdots \sma \bigvee_{c_{i,n} \in \mc{C}_n} \mc{C}_1(c_{0,n},c_{1,n}) \sma \cdots \sma \mc{C}(c_{k,n}, c_{0,n})\\
&\to \bigvee (\mc{C}_1 \sma \cdots \sma \mc{C}_n)(c_{0,1} \sma \cdots \sma c_{0,n},c_{1,1} \sma \cdots \sma c_{1,n}) \sma \cdots 
\end{align*}
where the maps are given by the fact that we have maps
\[
\mc{C}(c_0,c_1) \sma \mc{D}(d_0,d_1) \cong \mc{C}\sma \mc{D}(c_0 \sma d_0, c_1 \sma d_1)
\]
for spectral categories. 

The result \cite[X.1.3.(iv)]{EKMM} then gives that Eq. ~ \ref{shuffle} is an equivalence. 

\end{proof}

\begin{rmk}
It is more fun to use the siftedness of $\mbf{N}(\Delta^{\operatorname{op}})$ as an $\infty$-category to see that the equivalence of shuffle maps. Recall that simplicial object in a model category determines a simplicial object in an $\infty$-category via taking coherent nerves, with $\Delta^{\text{op}}$ considered as a discrete simplicial category. This also works for multisimplicial objects. Recall also that a simplicial set $K$ is sifted if $K \to K \times K$ is cofinal \cite[5.5.8.1]{LurieHTT}. 

By the above remarks, we obtain the following diagram
\[
\xymatrix{
\mbf{N}(\Delta^{\text{op}}) \ar[d]\ar[r] & \mbf{N}(\Delta^{\text{op}} \times \Delta^{\text{op}}) \ar[d]\ar[r] & \operatorname{Sp} \times \operatorname{Sp} \ar[r]^{\sma} & \operatorname{Sp}\\
 \ast\ar[urrr] & \ast\ar@{.>}[ur] \ar@{.>}[urr] & & \\ 
}
\]

Each of the diagonal arrows is a left Kan extension to a point (i.e. a colimit, which in the contex of $\infty$-categories is geometric realization). Since $\sma$ is designed to commute with colimits separately in each variable, the two dotted arrows map to the same object up to homotopy, and since $\mbf{N}(\Delta^{\text{op}})$ is sifted, the filled diagonal arrow maps to  objects. This is the statement that
\[
\diag |K_\ast \sma L_\ast | \simeq |K_\ast| \sma |L_\ast|
\]
\end{rmk}

Now, as a consequence of the theorem for relative categories, we have the following theorem, also proved in \cite{BGT_TC}. 

\begin{thm}\label{thh_sym_mon_infcat}
$\thh : \cat^{\operatorname{perf}}_\infty \to \Sp$ is a symmetric monoidal functor of $\infty$-categories. 
\end{thm}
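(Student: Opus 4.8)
The plan is to deduce Theorem~\ref{thh_sym_mon_infcat} from its rigid counterpart Theorem~\ref{thh_sym_mon_spectral} by passing along the equivalence of symmetric monoidal $\infty$-categories supplied by Lemma~\ref{flat_sym_mon}. Concretely, the functor $\thh$ of Theorem~\ref{thh_sym_mon_spectral} is a symmetric monoidal functor of \emph{relative} categories $\thh : (\mc{C}\text{at}^{\operatorname{flat}}_{\operatorname{Sp}}, \sma) \to (\Sp^{\operatorname{flat}}, \sma)$ that moreover preserves weak equivalences — indeed, inspection of the bar construction shows $\thh$ takes Morita equivalences of flat spectral categories to stable equivalences of spectra. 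So the first step is to check that the source $(\mc{C}\text{at}^{\operatorname{flat}}_{\operatorname{Sp}}, \mc{W})$ and a suitable flat model of spectra are symmetric monoidal special relative categories in the sense required, which is exactly the content of the discussion preceding Lemma~\ref{flat} together with the standard fact that symmetric spectra with the flat (pointwise cofibrant) structure form such a category.

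The second step is to apply Theorem~\ref{sym_mon}: a weak-equivalence-preserving symmetric monoidal functor of symmetric monoidal special relative categories induces a symmetric monoidal functor on the associated symmetric monoidal $\infty$-categories, $\mbf{N}(\mc{C}\text{at}^{\operatorname{flat}}_{\operatorname{Sp}})[\mc{W}^{-1}]^{\otimes} \to \mbf{N}(\Sp^{\operatorname{flat}})[\mc{W}^{-1}]^{\otimes}$. For this I would just verify the two hypotheses of Theorem~\ref{sym_mon}: that $\thh$ is symmetric monoidal (the shuffle/Eilenberg--Zilber maps of Theorem~\ref{thh_sym_mon_spectral} are the structure maps, and they are coherent because the shuffle product is), and that it preserves weak equivalences (flatness is exactly what makes the bar construction homotopy-invariant). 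Then the third step is to identify the two endpoints: by Lemma~\ref{flat_sym_mon} the source $\infty$-category is $(\mc{C}\text{at}^{\operatorname{perf}}_\infty)^{\otimes}$, and the localization of flat spectra at stable equivalences recovers $\Sp$ with its smash-product symmetric monoidal structure (this is the monoidal refinement of the usual presentation of $\Sp$ as a localization of symmetric spectra). Composing, we obtain a symmetric monoidal functor $\cat^{\operatorname{perf}}_\infty \to \Sp$, and one last naturality check confirms that the underlying functor agrees with the usual $\thh$ (e.g.\ on $\operatorname{LMod}^{\operatorname{perf}}_A$ it computes $\thh(A)$ by Morita invariance).

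The main obstacle I expect is bookkeeping rather than mathematical depth: one must be careful that the $\thh$ defined on flat spectral categories genuinely lands in, and is symmetric monoidal into, a \emph{flat} model of $\Sp$ for which the localization-to-$\Sp$ step is itself monoidal, and that the coherence data of the shuffle product (associativity, unitality, symmetry up to coherent homotopy at the point-set level) is enough to feed Theorem~\ref{sym_mon}. All of this is handled in \cite{BGT_TC}, so in the writeup I would state the chain of equivalences, cite Lemma~\ref{flat_sym_mon} and Theorem~\ref{sym_mon}, and remark that the only thing to verify beyond Theorem~\ref{thh_sym_mon_spectral} is homotopy-invariance of $\thh$ on flat spectral categories, which is immediate from flatness.
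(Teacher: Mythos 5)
Your proposal is correct and follows essentially the same route as the paper: the paper's proof is precisely to combine Theorem~\ref{thh_sym_mon_spectral}, the rigid model of Lemma~\ref{flat}/Lemma~\ref{flat_sym_mon}, and the descent statement Theorem~\ref{sym_mon}. Your additional remarks on homotopy-invariance of $\thh$ on flat spectral categories and on identifying the localized target with $\Sp$ are exactly the bookkeeping the paper delegates to \cite{BGT_TC}.
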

\begin{proof}
Combine Thm. ~\ref{thh_sym_mon_spectral}, Lem. 3.9 and Thm. 3.3
\end{proof}

We now have to say a little about Morita invariance. Classically, Morita invariance for Hochschild homology states that if $A$ and $B$ are Morita equivalent, then $\operatorname{HH}_\ast (A) = \operatorname{HH}_\ast (B)$, i.e. Hochschild homology depends only on the underlying category of modules. Similar statements for spectral categories are given by Blumberg and Mandell in \cite{BlumbergMandellWaldhausen}. Here is the statement we will need:

\begin{lem}\cite{BGT}\label{morita_equiv}
Let $A$ be a ring spectrum. Then we have an equivalence
\[
\thh(A) \simeq \thh(\operatorname{LMod}^{\text{perf}}_A).
\]
\end{lem}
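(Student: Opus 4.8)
The plan is to identify $\thh(A)$ with the $\thh$ of a one-object spectral category and then invoke a Morita-type invariance statement at the level of spectral categories. First I would observe that a ring spectrum $A$ is the same data as a spectral category $\mathcal{B}A$ with a single object $\ast$ and endomorphism spectrum $\mathcal{B}A(\ast,\ast) = A$, with composition given by the multiplication of $A$. Unwinding the definition of $\thh$ for a spectral category applied to $\mathcal{B}A$, the indexing sum over tuples of objects collapses (there is only one object), and the simplicial object $[n] \mapsto \bigvee A^{\wedge(n+1)}$ is precisely the cyclic bar construction computing $\thh(A)$ in the classical sense. Hence $\thh(A) \simeq \thh(\mathcal{B}A)$ essentially by definition, modulo checking that $\mathcal{B}A$ (or a flat replacement of it) is pointwise cofibrant so that the geometric realization has the correct homotopy type; one can arrange this by taking a cofibrant model of $A$ as an $S$-algebra.

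Next I would reduce to a statement about spectral categories: it suffices to show that $\mathcal{B}A$ and a flat model of $\operatorname{LMod}^{M,\circ,\operatorname{perf}}_A$ are Morita equivalent, i.e. have equivalent module categories, and that $\thh$ (as a functor on $\mc{C}\text{at}^{\operatorname{flat}}_{\operatorname{Sp}}$, or after descending to $\mc{C}\text{at}^{\operatorname{perf}}_\infty$) is invariant under Morita equivalence. The first point is standard Schwede--Shipley / Blumberg--Mandell theory: the perfect $A$-modules are exactly the thick subcategory generated by the free rank-one module $A$, which corresponds to the one-object spectral category $\mathcal{B}A$, and passing to the idempotent-complete thick closure does not change the $K$-theory/$\thh$-relevant Morita type. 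The second point — Morita invariance of $\thh$ — is exactly the kind of statement proved in \cite{BlumbergMandellWaldhausen} for spectral categories, and also follows from the fact (Thm.~\ref{thh_sym_mon_infcat}) that $\thh$ factors through $\mc{C}\text{at}^{\operatorname{perf}}_\infty$, where $\mathcal{B}A$ and $\operatorname{LMod}^{\operatorname{perf}}_A$ represent the same object: under the equivalence $\mc{C}\text{at}^{\operatorname{perf}}_\infty \simeq \mbf{N}(\mc{C}\text{at}^{\operatorname{flat}}_{\operatorname{Sp}})[\mc{W}^{-1}]$ of Lem.~\ref{flat}, the spectral category $\mathcal{B}A$ and the spectral category of perfect $A$-modules become Morita equivalent, hence define the same object of $\mc{C}\text{at}^{\operatorname{perf}}_\infty$, and applying the well-defined functor $\thh$ gives the equivalence.

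Concretely, the cleanest route: (1) exhibit $\thh(A) \simeq \thh(\mathcal{B}A)$ by direct comparison of simplicial objects; (2) note $\mathcal{B}A \in \mc{C}\text{at}^{\operatorname{flat}}_{\operatorname{Sp}}$ (after flat replacement) represents, under the equivalence of Lem.~\ref{flat_sym_mon}, the $\infty$-category $\operatorname{LMod}^{\operatorname{perf}}_A$ — this is the assertion that $\operatorname{LMod}^{\operatorname{perf}}_A$ is the idempotent completion of the one-object stable category on $A$, which is \cite[8.2.5.2]{LurieHA}; (3) conclude $\thh(\mathcal{B}A) \simeq \thh(\operatorname{LMod}^{\operatorname{perf}}_A)$ since both sides are $\thh$ applied to the same object of $\mc{C}\text{at}^{\operatorname{perf}}_\infty$ and $\thh$ is well-defined on that category by Thm.~\ref{thh_sym_mon_infcat}.

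\textbf{Main obstacle.} The genuinely delicate step is (1)/(2): making precise that replacing $\mathcal{B}A$ by its thick-and-idempotent-complete closure does not alter $\thh$. At the level of spectral categories this is the Morita-invariance theorem of \cite{BlumbergMandellWaldhausen}, whose proof requires a careful dévissage argument (a cofiber sequence / additivity argument showing that adding a summand retract or a cofiber to the category does not change $\thh$, together with a colimit argument to pass from finitely-built to idempotent-completed). Everything else is bookkeeping with definitions and citations; the cyclic-bar-construction identification in step (1) is essentially formal once one fixes a pointwise-cofibrant model, and the descent from flat spectral categories to $\mc{C}\text{at}^{\operatorname{perf}}_\infty$ is handled by the already-cited Lem.~\ref{flat}, Lem.~\ref{flat_sym_mon}, and Thm.~\ref{thh_sym_mon_infcat}.
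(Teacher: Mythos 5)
The paper offers no proof of this lemma at all: it is imported verbatim from \cite{BGT} (with the pointer to Blumberg--Mandell \cite{BlumbergMandellWaldhausen} for the spectral-category statement), so there is no internal argument to compare yours against. Your sketch is a correct reconstruction of the standard argument behind that citation: identify $\thh(A)$ with $\thh$ of the one-object flat spectral category $\mathcal{B}A$ via the cyclic bar construction, note that $\mathcal{B}A$ and a flat model of the perfect $A$-modules are Morita equivalent and hence become the same object under $\mc{C}\text{at}^{\operatorname{perf}}_\infty \simeq \mbf{N}(\mc{C}\text{at}^{\operatorname{flat}}_{\operatorname{Sp}})[\mc{W}^{-1}]$, and then apply $\thh$ as a functor on $\mc{C}\text{at}^{\operatorname{perf}}_\infty$. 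The one point to be careful about is your parenthetical claim that Morita invariance ``also follows from'' Thm.~\ref{thh_sym_mon_infcat}: read as an independent proof this is circular, because descending $\thh$ from flat spectral categories to the Morita localization $\mc{C}\text{at}^{\operatorname{perf}}_\infty$ already presupposes that $\thh$ carries Morita equivalences to equivalences of spectra. The genuine content is exactly what you flag as the main obstacle --- the d\'evissage/additivity-style argument showing invariance under passage to the thick, idempotent-complete closure --- and that is supplied by \cite{BlumbergMandellWaldhausen} and \cite{BGT}, which is precisely what this paper leans on by citing the lemma rather than proving it.
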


\subsection{The Main Theorem and Application}

We apply what we have done above to prove our main result. 

First, we recall that the fact that $\mc{C}\text{at}^{\text{perf}}_\infty$ is symmetric monoidal with internal function objects gives us coevaluation maps
\[
\Sp^\omega \to \mc{C} \otimes \fun^{\operatorname{Ex}} (\mc{C}, \Sp^\omega). 
\]
adjoint to the identity map.

\begin{lem}\label{thh_left_dual}
Let $\mc{C}$ and $\mc{D}$ be categories which are (left) dual in $\mc{C}\text{at}^{\operatorname{perf}}_\infty$. Then
\[
\thh(\mc{C}) \simeq D(\thh(\mc{D}))
\]
\end{lem}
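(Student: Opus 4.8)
The plan is to exploit the fact, recorded in Theorem~\ref{thh_sym_mon_infcat}, that $\thh \colon \cat^{\operatorname{perf}}_\infty \to \Sp$ is symmetric monoidal, together with the elementary observation that a symmetric monoidal functor preserves duality data. Concretely, if $\mc{C}$ and $\mc{D}$ are left dual in $\cat^{\operatorname{perf}}_\infty$, then by Definition~\ref{dualizable} there are evaluation and coevaluation maps $\tau\colon \mc{C}\otimes\mc{D}\to \Sp^\omega$ and $\chi\colon \Sp^\omega\to \mc{D}\otimes\mc{C}$ satisfying the triangle (zig-zag) identities in $\Ho(\cat^{\operatorname{perf}}_\infty)$. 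First I would apply $\thh$ to these maps and use the monoidal structure constraints $\thh(\mc{C}\otimes\mc{D})\simeq \thh(\mc{C})\sma\thh(\mc{D})$ and $\thh(\Sp^\omega)\simeq S$ (the latter being the statement that $\thh$ sends the unit to the unit, i.e.\ $\thh$ of the unit category is the sphere spectrum) to obtain maps
\[
\widetilde\tau\colon \thh(\mc{C})\sma \thh(\mc{D}) \to S,\qquad \widetilde\chi\colon S \to \thh(\mc{D})\sma\thh(\mc{C})
\]
in $\Sp$.

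The second step is to check that $\widetilde\tau$ and $\widetilde\chi$ again satisfy the triangle identities, now in $\Ho(\Sp)$. This is formal: a (lax-to-strong) symmetric monoidal functor carries the composite
\[
\mc{C}\xrightarrow{\id\otimes\chi}\mc{C}\otimes\mc{D}\otimes\mc{C}\xrightarrow{\tau\otimes\id}\mc{C}
\]
to the corresponding composite built from $\widetilde\chi$ and $\widetilde\tau$ after inserting the coherence isomorphisms, and since the original composite is the identity in $\Ho(\cat^{\operatorname{perf}}_\infty)$ and $\thh$ preserves identities and composition on homotopy categories, the image composite is the identity in $\Ho(\Sp)$; similarly for the other zig-zag. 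Hence $\thh(\mc{C})$ is left dual to $\thh(\mc{D})$ in $\Sp$, so $\thh(\mc{C})\simeq D(\thh(\mc{D}))$, where $D$ is Spanier--Whitehead duality (the internal-hom-into-the-unit dual in $\Sp$, which agrees with the abstract categorical dual for dualizable objects).

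I would present this cleanly by recording once and for all the general lemma that a symmetric monoidal functor of symmetric monoidal $\infty$-categories preserves duality pairings, and then simply instantiate it with $F=\thh$, citing Theorem~\ref{thh_sym_mon_infcat} for the monoidal structure. The main thing to be careful about — and the only real content beyond formal nonsense — is the bookkeeping of the coherence isomorphisms (associativity, unitality, and the monoidal structure maps of $\thh$) when transporting the zig-zag identities; in an $\infty$-categorical setting one must make sure these are homotopies that can be strung together coherently, but since duality is detected entirely in the homotopy category $\Ho(\Sp)$ (as noted just before Definition~\ref{dualizable}), it suffices to track everything at the level of $\Ho$, where the symmetric monoidal structure on $\Ho(\thh)$ makes the verification routine. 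No compactness or Koszul-duality hypotheses enter here; those are only needed to identify $\mc{C}=\operatorname{LMod}^{\operatorname{perf}}_A$ with the dual of $\operatorname{LMod}^{\operatorname{perf}}_{B^{\mathrm{op}}}$ via Proposition~\ref{koszul_duality_categories}, which is the input to the subsequent corollary rather than to this lemma.
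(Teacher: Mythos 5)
Your proposal is correct and is essentially the paper's own argument: the paper likewise observes that left duality is expressed by evaluation/coevaluation maps and the triangle identity, which any symmetric monoidal functor (here $\thh$, via Thm.~\ref{thh_sym_mon_infcat}) preserves. Your version simply spells out the coherence bookkeeping and the identification $\thh(\Sp^\omega)\simeq S$ that the paper leaves implicit.
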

\begin{proof}
This is simply the fact that $\thh$ is symmetric monoidal (Thm ~\ref{thh_sym_mon_infcat}) and the fact that $\mc{C}$ and $\mc{D}$ are left dual may be expressed purely in terms of evaluation and coevaluation maps. That is, the map
\[
\mc{C} \to \mc{C} \otimes \mc{D} \otimes \mc{C} \to \mc{C}
\]
must be the identity. This will be preserved by any symmetric monoidal functor. 

\end{proof}

Finally, we may state and prove the main theorem of the introduction. Note that by examples the compactness statements, as well as carefully keeping track of opposite algebras is essential. There is no reason the theorem should be true (at least not from module category considerations) under other assumptions. 

\begin{thm}\label{main_thm}
Let $A$ and $B$ be $\mbf{E}_1$-Koszul dual ring spectra, and further assume that $A$ is finitely built as an $S$-module. Then, we have
\[
D(\thh(A)) \simeq \thh(B^{\text{op}})
\]
\end{thm}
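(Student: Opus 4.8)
The plan is to assemble the statement from three ingredients already in hand: Morita invariance of $\thh$ (Lemma~\ref{morita_equiv}), the duality of perfect module categories for a Koszul dual pair (Proposition~\ref{koszul_duality_categories}), and the fact that a symmetric monoidal functor carries the evaluation/coevaluation data of a dual pair to such data (Lemma~\ref{thh_left_dual}, resting on Theorem~\ref{thh_sym_mon_infcat}). All of the genuine mathematical content lies upstream — chiefly the computation in Example~\ref{dual_cat} and Proposition~\ref{koszul_duality_categories} identifying the dual of $\operatorname{LMod}^{\operatorname{perf}}_A$ with $\mc{T}_A(S)\simeq\operatorname{LMod}^{\operatorname{perf}}_{B^{\operatorname{op}}}$ — so what is left at this stage is careful bookkeeping.

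First I would use Lemma~\ref{morita_equiv} to replace $A$ and $B^{\operatorname{op}}$ by their categories of perfect modules, reducing the claim to an equivalence between $\thh(\operatorname{LMod}^{\operatorname{perf}}_A)$ and $\thh(\operatorname{LMod}^{\operatorname{perf}}_{B^{\operatorname{op}}})$. Here the hypothesis that $A$ is finitely built from $S$ does the essential work: via $D(\operatorname{LMod}^{\operatorname{perf}}_A)\simeq\operatorname{LMod}^{S\text{-comp}}_A\simeq\mc{T}_A(S)\simeq\operatorname{LMod}^{\operatorname{perf}}_{B^{\operatorname{op}}}$ (Example~\ref{dual_cat}, Lemma~\ref{thick_and_S_comp}, Theorem~\ref{main_koszul}, Proposition~\ref{koszul_duality_categories}) one gets that $\operatorname{LMod}^{\operatorname{perf}}_A$ and $\operatorname{LMod}^{\operatorname{perf}}_{B^{\operatorname{op}}}$ form a dual pair in $\mc{C}\text{at}^{\operatorname{perf}}_\infty$; in particular $\operatorname{LMod}^{\operatorname{perf}}_A$ is a dualizable object, which is precisely the point where $S$-finiteness is indispensable. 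Then I would apply the symmetric monoidal functor $\thh$ (Theorem~\ref{thh_sym_mon_infcat}) to the evaluation and coevaluation maps witnessing this duality — this is exactly the mechanism of Lemma~\ref{thh_left_dual} — obtaining that $(\thh(\operatorname{LMod}^{\operatorname{perf}}_A),\thh(\operatorname{LMod}^{\operatorname{perf}}_{B^{\operatorname{op}}}))$ is a dual pair in $\Sp$. Since the monoidal structure on $\Sp$ is symmetric, being a dual pair is a symmetric relation and it identifies each member with the Spanier--Whitehead dual of the other, so $\thh(B^{\operatorname{op}})\simeq D(\thh(A))$; composing with the Morita equivalences gives $D(\thh(A))\simeq\thh(B^{\operatorname{op}})$.

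The step I expect to be the main obstacle is making precise that this is an \emph{honest} dual pair — i.e. that the computed category $\fun^{\operatorname{Ex}}(\operatorname{LMod}^{\operatorname{perf}}_A,\Sp^\omega)$ is a genuine categorical dual, with evaluation and coevaluation satisfying the triangle identities of Definition~\ref{dualizable}, and not merely the internal mapping object. This is exactly where finiteness of $A$ over $S$ enters (through the concrete characterization of dualizable objects of $\mc{C}\text{at}^{\operatorname{perf}}_\infty$ in \cite{BGT}), and it is also what forces the asymmetry of the hypotheses: the argument is genuinely not symmetric in $A$ and $B$, and the appearance of $B^{\operatorname{op}}$ rather than $B$ must be tracked throughout, since for a merely $\mbf{E}_1$-algebra one cannot identify left and right modules. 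Once the dual pair is in hand, the remaining steps are formal.
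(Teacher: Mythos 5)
Your proposal is correct and follows essentially the same route as the paper's own proof: Morita invariance (Lemma~\ref{morita_equiv}), the identification $D(\operatorname{LMod}^{\operatorname{perf}}_A)\simeq\mc{T}_A(S)\simeq\operatorname{LMod}^{\operatorname{perf}}_{B^{\operatorname{op}}}$ of Proposition~\ref{koszul_duality_categories}, and then applying the symmetric monoidal functor $\thh$ to the (co)evaluation data exactly as in Lemma~\ref{thh_left_dual}. The subtlety you flag --- that the internal mapping object must really furnish a dual pair satisfying the triangle identities, which is where $S$-finiteness of $A$ and the handedness (left duality, $B^{\operatorname{op}}$ versus $B$) enter --- is precisely the point the paper handles via its left-duality formulation, so your bookkeeping matches the paper's.
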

\begin{proof}
By the Morita equivalence of \ref{morita_equiv}, we have $\thh(A) \simeq \thh(\operatorname{LMod}^{\text{perf}}_A)$. Furthermore, by Prop. ~\ref{koszul_duality_categories} we have $D(\operatorname{LMod}^{\text{perf}}_A) \simeq \operatorname{LMod}^{\text{perf}}_B$. This exhibits $\operatorname{LMod}^{\operatorname{perf}}_{B^{\text{op}}}$ as left-dual to $\operatorname{LMod}^{\operatorname{perf}}_A$. Invoking Lem. ~\ref{thh_left_dual} we finally have the chain of equivalences
\[
D(\thh(A)) \simeq D(\thh(\operatorname{LMod}^{\operatorname{perf}}_A)) \simeq \thh(\operatorname{LMod}^{\operatorname{perf}}_{B^{\text{op}}}) \simeq \thh(B^{\text{op}}).
\]
This completes the proof. 
\end{proof}

Using the theorem and work of \cite{BlumbergMandellKoszul} we  recover Ralph Cohen's original observation:

\begin{cor}
Let $X$ be a simply connected, finite CW-complex, then
\[
 D(\thh (DX))\simeq \thh(\Sigma^\infty_+ \Omega X) 
\]
\end{cor}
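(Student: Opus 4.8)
The plan is to apply the main theorem, Thm.~\ref{main_thm}, to the specific pair of Koszul dual $\mbf{E}_1$-ring spectra arising from a simply connected finite CW-complex $X$. First I would set $A = \Sigma^\infty_+ \Omega X$ and $B = DX$, the Spanier--Whitehead dual of $X$. The two things that need to be verified in order to invoke the theorem are: (i) that $A$ and $B$ are Koszul dual in the sense of our earlier definition, and (ii) that $A = \Sigma^\infty_+ \Omega X$ is finitely built as an $S$-module.

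For (i), I would cite \cite{BlumbergMandellKoszul}, where precise point-set models are constructed making $\Sigma^\infty_+ \Omega X$ and $DX$ into honest associative ring spectra with commuting module structures on $S$ (the required pairing $\Sigma^\infty_+\Omega X \sma DX \to S$ comes from evaluation), and where the double-centralizer condition is verified. The heuristic computation is already sketched in the Example in \S2.1: $\operatorname{REnd}_{\Sigma^\infty_+\Omega X}(S,S) \simeq \hom(B(S,\Sigma^\infty_+\Omega X, S), S) \simeq \hom(\Sigma^\infty_+ B(\ast,X,\ast),S) \simeq DX$, and symmetrically $\operatorname{REnd}_{DX}(S,S)\simeq \Sigma^\infty_+\Omega X$, so the dc-completeness holds; the careful version is exactly the content of \cite{BlumbergMandellKoszul}. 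For (ii), since $X$ is a finite CW-complex it is finitely built from $S$ via cofiber sequences (cells) and hence so is $DX$; by the cited \cite[Pr.~4.17]{DGI} (see the Remark following it), $A = \Sigma^\infty_+\Omega X \simeq \operatorname{REnd}_{DX}(S,S)$ is finitely built from $S$ if and only if $S$ is finitely built from $DX$, which holds because $X$ simply connected and finite means $S \in \mc{T}_{DX}(S)$ — alternatively one observes $\Sigma^\infty_+\Omega X \in \mc{T}_{\Sigma^\infty_+\Omega X}(S)$ directly from the Eilenberg--Moore / bar filtration being finite in each degree with $X$ finite and simply connected. Either way, the hypothesis of Thm.~\ref{main_thm} is met.

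With both hypotheses in hand, Thm.~\ref{main_thm} gives $D(\thh(A)) \simeq \thh(B^{\mathrm{op}})$, i.e. $D(\thh(\Sigma^\infty_+\Omega X)) \simeq \thh((DX)^{\mathrm{op}})$. Since $DX$ is an $\mbf{E}_\infty$-ring spectrum, it is commutative, so $(DX)^{\mathrm{op}} \simeq DX$ and hence $\thh((DX)^{\mathrm{op}}) \simeq \thh(DX)$. Thus $D(\thh(\Sigma^\infty_+\Omega X)) \simeq \thh(DX)$; applying $D$ to both sides and using that $\thh(\Sigma^\infty_+\Omega X) \simeq \Sigma^\infty_+ \mc{L}X$ is finite (so $D D \simeq \id$ on it) yields $\thh(\Sigma^\infty_+\Omega X) \simeq D(\thh(DX))$, which is the claimed equivalence after rewriting.

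The main obstacle is not the formal deduction — that is a one-line application of the main theorem plus commutativity of $DX$ — but rather making sure the asymmetry of the hypotheses is respected: the theorem requires the algebra that is \emph{finitely built from $S$} to be in the $A$-slot, and here that is $\Sigma^\infty_+\Omega X$, not $DX$ (indeed $DX$ is generally not finitely built from $S$ unless $X$ is stably dualizable, but $S$ \emph{is} finitely built from $DX$, which is the relevant condition). So the care is in the bookkeeping of which spectrum plays which role, and in invoking \cite{BlumbergMandellKoszul} for the honest (not merely heuristic) verification that these are Koszul dual $\mbf{E}_1$-algebras. I would write the corollary's proof as: ``Apply Thm.~\ref{main_thm} with $A = \Sigma^\infty_+\Omega X$, $B = DX$; the Koszul duality is \cite{BlumbergMandellKoszul} and the compactness of $A$ over $S$ holds since $X$ is a simply connected finite complex. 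Since $DX$ is $\mbf{E}_\infty$, $(DX)^{\mathrm{op}} \simeq DX$, and since $\thh(\Sigma^\infty_+\Omega X) \simeq \Sigma^\infty_+\mc{L}X$ is a finite spectrum, $D$ is an involution on it, giving $\thh(\Sigma^\infty_+\Omega X) \simeq D(\thh(DX))$.''
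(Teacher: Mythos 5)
Your deduction runs the main theorem in the wrong direction, and the hypotheses fail for your choice of slots. Thm.~\ref{main_thm} requires the algebra in the $A$-slot to be small as an $S$-module, i.e.\ a finite spectrum. For a simply connected finite complex $X$ that algebra is $DX$ (the Spanier--Whitehead dual of a finite complex is finite), \emph{not} $\Sigma^\infty_+ \Omega X$: already for $X = S^2$ the homology of $\Omega S^2$ is nonzero in every even degree, so $\Sigma^\infty_+\Omega X$ is not finitely built from $S$, and your parenthetical claim that ``$DX$ is generally not finitely built from $S$ unless $X$ is stably dualizable'' concedes the point, since every finite CW complex is stably dualizable. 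The compactness argument you give via \cite[Pr.~4.17]{DGI} is likewise misapplied: the condition ``$S$ is finitely built from $DX$'' (i.e.\ $S$ is a perfect $DX$-module) is false in general --- rationally for $X=S^2$ it is the assertion that $\Q$ is perfect over $\Q[x]/x^2$ --- whereas what does hold for finite simply connected $X$ is the opposite pair: $DX$ is finitely built from $S$, and $S$ is perfect over $\Sigma^\infty_+\Omega X$ (via the finite $\Omega X$-cell structure on the contractible path space). As a result, the intermediate statement you derive, $D(\thh(\Sigma^\infty_+\Omega X))\simeq \thh(DX)$, is precisely the statement that the remark following the corollary warns is false in general, and the attempted repair by applying $D$ again breaks down because $\thh(\Sigma^\infty_+\Omega X)\simeq \Sigma^\infty_+\mc{L}X$ is not a finite spectrum, so it is not reflexive under Spanier--Whitehead duality; $DD\simeq \id$ is exactly what you may not use here.

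The intended argument is the transposed one: apply Thm.~\ref{main_thm} with $A = DX$ and $B \simeq \Sigma^\infty_+\Omega X$, which are Koszul dual for $X$ simply connected by \cite{BlumbergMandellKoszul} (this is where the dc-completeness of $DX \to S$, essentially Eilenberg--Moore convergence, is verified), and note that $A = DX$ is small as an $S$-module because $X$ is a finite complex. The theorem then gives
\[
D(\thh(DX)) \simeq \thh\bigl((\Sigma^\infty_+\Omega X)^{\text{op}}\bigr) \simeq \thh(\Sigma^\infty_+\Omega X),
\]
the last equivalence coming from the loop-reversal anti-involution on $\Sigma^\infty_+\Omega X$ (or from the general invariance of $\thh$ under passing to the opposite algebra). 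So the corollary is a one-line application of the theorem, but only after the roles of the two algebras are assigned as above; your bookkeeping of ``which spectrum plays which role'' is exactly reversed, and with it the compactness hypothesis and the conclusion.
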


\begin{rmk}
Note that in general there is an asymmetry so that $D(\thh(\Sigma^\infty_+\Omega X)) \neq \thh(DX)$. In particular, this prevents us from trying to find the Spanier-Whitehead dual of $\Sigma^\infty_+ \mc{L} X$. 
\end{rmk}

\begin{rmk}
The right hand side is quite easy to compute, so provides another way to see B\"{o}kstedt-Waldhausen's classical computation that
\[
\thh(\Sigma^\infty_+ \Omega X) \simeq \Sigma^\infty_+ \mc{L} X. 
\]
\end{rmk}

\section{Extensions and Further Directions}

As mentioned in the introduction, there are a few other directions this work could be generalized in. In this paper we have been working over $S$ as a ``ground field''. There is reason to believe that finding Koszul duals augmented over $S$ may be harder than finding Koszul duals augmented over some other ground ring $R$. For example, examples of Koszul duality over other ground rings appear in a paper of Baker and Lazarev \cite{baker_lazarev}. It would thus be useful to know the main theorem of the paper for $\thh_R$. 

To this end, the following theorem will appear in future work of the author. 

\begin{thm}\label{thm_over_R}
$\thh_R: \mc{C}\text{at}^{\text{perf}}_{\infty, R} \to \Sp$ is a symmetric monoidal functor. Furthermore for an augmented $R$-algebra $A$, and its Koszul dual $B$, we have
\[
\thh_R (A) \simeq D_R (\thh_R (B))
\]
\end{thm}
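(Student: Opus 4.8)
The plan is to run the argument of Theorem~\ref{main_thm} with the sphere $S$ replaced by $R$ everywhere; the one genuinely new ingredient is the $R$-linear monoidal framework together with the symmetric monoidality of $\thh_R$. First I would set up $\mc{C}\text{at}^{\operatorname{perf}}_{\infty,R}$ as the symmetric monoidal $\infty$-category $\operatorname{Mod}_{\operatorname{Perf}_R}(\mc{C}\text{at}^{\operatorname{perf}}_\infty)$, i.e. small stable idempotent-complete $R$-linear $\infty$-categories under the relative tensor product $\widehat{\otimes}_R$, with unit $\operatorname{Perf}_R=\operatorname{Mod}_R^{\operatorname{perf}}$. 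As in Section~3, the practical route is a rigid model: let $\mc{C}\text{at}^{\operatorname{flat}}_{\operatorname{Mod}_R}$ be the category of pointwise-cofibrant spectral categories enriched in $R$-modules, with Morita equivalences, and with monoidal product $\sma_R$ on mapping objects. One checks this is a symmetric monoidal special relative category (via Thm~\ref{sym_mon} together with \cite[4.1.3.2, 4.1.3.4]{LurieHA}), that passage to $\infty$-categories recovers $(\mc{C}\text{at}^{\operatorname{perf}}_{\infty,R})^{\otimes}$ --- the $R$-linear analogue of Lem.~\ref{flat} and Lem.~\ref{flat_sym_mon} --- and that $\thh_R$, defined on it by the cyclic bar construction formed with $\sma_R$, is symmetric monoidal: the shuffle maps $\thh_R(\mc{C}_1)\sma_R\cdots\sma_R\thh_R(\mc{C}_n)\to\thh_R(\mc{C}_1\sma_R\cdots\sma_R\mc{C}_n)$ are equivalences by the $R$-relative analogue of \cite[X.1.3.(iv)]{EKMM} (this is where pointwise cofibrancy over $R$ enters), or equivalently by the argument in the remark after Thm~\ref{thh_sym_mon_spectral}, since $\mbf{N}(\Delta^{\text{op}})$ is sifted and $\widehat{\otimes}_R$ preserves colimits separately in each variable. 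Theorem~\ref{sym_mon} then produces a symmetric monoidal functor $\thh_R:\mc{C}\text{at}^{\operatorname{perf}}_{\infty,R}\to\operatorname{Mod}_R$; post-composing with the forgetful functor gives the functor to $\Sp$ of the statement, although the relevant dual $D_R=F_R(-,R)$ is formed in $\operatorname{Mod}_R$.

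Next I would relativize the module-category input. For $A$ an augmented $R$-algebra with Koszul dual $B$ one has $B\simeq\operatorname{REnd}_A(R,R)$, and Lem.~\ref{koszul}, Thm.~\ref{main_koszul} and the Schwede--Shipley argument following Thm~\ref{main_koszul} go through verbatim with $S$ replaced by $R$: $\mc{L}_A(R)\simeq\operatorname{RMod}_{\operatorname{End}_A(R,R)}$, hence $\mc{T}_A(R)\simeq\operatorname{LMod}^{\operatorname{perf}}_{B^{\text{op}}}$ as $R$-linear $\infty$-categories. Likewise Prop.~\ref{thick_equivalence} and Lem.~\ref{thick_and_S_comp} relativize: assuming $A$ is compact as an $R$-module (the hypothesis replacing ``$A$ finitely built over $S$'', which must be implicit in the statement), $\mc{T}_A(R)\simeq\operatorname{LMod}^{R\text{-comp}}_A$, the $A$-modules compact over $R$. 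Finally the computation of the left dual in Ex.~\ref{dual_cat} and Prop.~\ref{koszul_duality_categories} carries over with $\Sp^\omega$ replaced by the unit $\operatorname{Perf}_R$ and with $R$-relative right-compactness in place of $S$-relative: the maps $\mu_M:\operatorname{Perf}_R\to\operatorname{LMod}^{\operatorname{perf}}_A$ compute each $A$-module as an $R$-module, so exactly as in the absolute case $D_R(\operatorname{LMod}^{\operatorname{perf}}_A)\simeq\operatorname{LMod}^{R\text{-comp}}_A\simeq\mc{T}_A(R)\simeq\operatorname{LMod}^{\operatorname{perf}}_{B^{\text{op}}}$.

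The assembly then mirrors Theorem~\ref{main_thm} exactly. Morita invariance for relative $\thh$, namely $\thh_R(A)\simeq\thh_R(\operatorname{LMod}^{\operatorname{perf}}_A)$ (the analogue of Lem.~\ref{morita_equiv}, from the $R$-linear versions of \cite{BGT,BlumbergMandellWaldhausen}), combined with the duality of module categories above and the symmetric monoidality of $\thh_R$ (Lem.~\ref{thh_left_dual}, relativized), gives
\[
D_R(\thh_R(A))\simeq D_R(\thh_R(\operatorname{LMod}^{\operatorname{perf}}_A))\simeq\thh_R(\operatorname{LMod}^{\operatorname{perf}}_{B^{\text{op}}})\simeq\thh_R(B^{\text{op}}),
\]
which is the asserted equivalence, up to the usual bookkeeping of ${}^{\text{op}}$'s (invisible when $B$ is $\mbf{E}_\infty$, as in the motivating case $B=DX$). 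The main obstacle is the first paragraph: unlike $\sma$, the relative smash $\sma_R$ is only homotopically well-behaved after imposing flatness or cofibrancy over $R$, so constructing the rigid model $\mc{C}\text{at}^{\operatorname{flat}}_{\operatorname{Mod}_R}$, checking that the shuffle maps are equivalences, and verifying that passing to $\infty$-categories recovers precisely $(\mc{C}\text{at}^{\operatorname{perf}}_{\infty,R})^{\otimes}$ with its relative-tensor monoidal structure is the genuinely new technical content; everything downstream is a formal relativization of Sections~2 and~4.
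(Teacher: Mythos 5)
Your proposal takes exactly the route the paper intends: the paper gives no proof of this theorem in the present document, only a remark deferring it to forthcoming work and noting that once $R$-linear categories are modeled by spectral categories enriched in $R$-modules, the known properties of $\thh$ (Morita invariance and symmetric monoidality) force the result. Your outline is precisely that relativization of Sections 2--4, and you correctly flag both the implicit compactness hypothesis on $A$ over $R$ (and the ${}^{\text{op}}$ bookkeeping) and the construction of the rigid $R$-linear model with its shuffle equivalences as the genuinely new technical content that the paper itself postpones.
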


\begin{rmk}
This theorem follows immediately, once the necessary foundations are in place. Once we know that categories enriched in $R$-modules can be modeled by spectral categories enriched in $R$-modules, known properties of $\thh$ (Morita invariance, symmetric monoidality) force the theorem. 
\end{rmk}

This provides a duality over other ground rings, and may lead to ways to compute $\thh_R$. For example, we have the following corollaries from the examples in \cite{baker_lazarev}.

\begin{cor}
$\operatorname{MU}^\wedge_p$ and $B_p = F_{MU^\wedge_p}(H\mbf{F}_p, H\mbf{F}_p)$ are Koszul dual. Thus
\[
\thh_{\operatorname{H}\mbf{F}_p} (\operatorname{MU}^\wedge_p) \simeq D_{H\mbf{F}_p} (\thh_{H\mbf{F}_p} (B_p))
\]
\end{cor}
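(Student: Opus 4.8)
The plan is to obtain this as an instance of the relative duality of Theorem~\ref{thm_over_R}, with the ground ring taken to be $R = H\mbf{F}_p$. Two inputs must be supplied: first, that $MU^\wedge_p$ and $B_p$ form a Koszul dual pair of augmented algebras, and second, that the compactness hypothesis underlying Theorem~\ref{thm_over_R} — the analogue, over $H\mbf{F}_p$, of the condition ``$A$ finitely built as an $S$-module'' in Theorem~\ref{main_thm} — is satisfied. Granting both, the argument is formally the skeleton of the proof of Theorem~\ref{main_thm}: the relative version of Morita invariance (Lemma~\ref{morita_equiv}) identifies $\thh_{H\mbf{F}_p}$ of the algebra with $\thh_{H\mbf{F}_p}$ of its category of perfect modules; the $H\mbf{F}_p$-linear analogue of Proposition~\ref{koszul_duality_categories} exhibits the module category of $B_p$ (up to an ``op'' that is harmless in this example) as the $R$-linear dual of the module category of $MU^\wedge_p$; and symmetric monoidality of $\thh_R$ (part of Theorem~\ref{thm_over_R}), via the relative form of Lemma~\ref{thh_left_dual}, turns this categorical duality into the asserted equivalence $\thh_{H\mbf{F}_p}(MU^\wedge_p) \simeq D_{H\mbf{F}_p}(\thh_{H\mbf{F}_p}(B_p))$.

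For the Koszul duality input I would quote the computations of \cite{baker_lazarev}. There $H\mbf{F}_p$ is realized as the $MU^\wedge_p$-module obtained by killing the regular sequence $(p, x_1, x_2, \dots)$ of polynomial generators, and $B_p = F_{MU^\wedge_p}(H\mbf{F}_p, H\mbf{F}_p)$ is the resulting Koszul-type dual, with $\pi_\ast B_p$ an exterior algebra on the classes dual to that sequence; by the usual change-of-rings adjunction one may present $B_p$ as the $H\mbf{F}_p$-linear dual of the coalgebra $H\mbf{F}_p \sma_{MU^\wedge_p} H\mbf{F}_p$, so that it is an augmented $H\mbf{F}_p$-algebra. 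The canonical bimodule structure on $H\mbf{F}_p$ furnishes the commuting module structures required by the definition of Koszul duality, and the substantive point — which is exactly what \cite{baker_lazarev} verifies — is that the double-centralizer map $MU^\wedge_p \to \operatorname{REnd}_{B_p}(H\mbf{F}_p, H\mbf{F}_p)$ is an equivalence, i.e.\ $MU^\wedge_p$ is dc-complete. Together these facts say precisely that $MU^\wedge_p$ and $B_p$ are Koszul dual in the sense used throughout the paper.

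The step I expect to be the real obstacle is the compactness input, together with the bookkeeping needed to pass between the $MU^\wedge_p$-linear picture of \cite{baker_lazarev} and the $H\mbf{F}_p$-linear framework in which Theorem~\ref{thm_over_R} is phrased; here one must be especially careful about which object plays the role of ``$A$'' and about whether $\thh_{H\mbf{F}_p}(-)$ is being applied to a genuinely $H\mbf{F}_p$-linear algebra. Concretely, one has to confirm that the relevant algebra is finitely built from $H\mbf{F}_p$; by the relative analogue of \cite[Pr.~4.17]{DGI} this is equivalent to a finite-buildability statement for $H\mbf{F}_p$ from the dual algebra, and one must check that the infinitude of the regular sequence $(p, x_1, x_2, \dots)$ does not obstruct this once everything is base-changed into $\operatorname{LMod}_{H\mbf{F}_p}$. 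I would carry this out by working throughout with $H\mbf{F}_p$-modules, using that $H\mbf{F}_p \sma_{MU^\wedge_p} H\mbf{F}_p$ is a wedge of suspensions of $H\mbf{F}_p$ and that $H\mbf{F}_p$-linear duality interchanges the thick subcategory generated by $H\mbf{F}_p$ with the perfect modules over the dual algebra — the same mechanism as in Lemma~\ref{thick_and_S_comp} and Theorem~\ref{main_koszul}. Once that finiteness input is secured and the ground-ring translation is pinned down, the conclusion is immediate from Theorem~\ref{thm_over_R}.
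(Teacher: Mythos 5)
Your proposal takes essentially the same route as the paper: the corollary is stated there without further argument, as an immediate instance of Theorem~\ref{thm_over_R} (itself deferred to future work) applied to the Koszul dual pair $(\operatorname{MU}^\wedge_p, B_p)$ supplied by \cite{baker_lazarev}, which is exactly the skeleton you describe. The caveats you flag --- the compactness hypothesis over $H\mbf{F}_p$ and the bookkeeping about $\operatorname{MU}^\wedge_p$ not being an $H\mbf{F}_p$-algebra --- are genuine but are not addressed in the paper either, since Theorem~\ref{thm_over_R} is stated without a compactness hypothesis and no proof of the corollary is given; your treatment is, if anything, more careful than the text it is checked against.
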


Another result that is a consequence of \ref{thm_over_R} is Jones and McCleary's result \cite{jones_mccleary} which served as inspiration for the present result. 

\begin{thm}[\cite{jones_mccleary}]
Let $X$ be a simply-connected space and $k$ a field. Then 
\[
\operatorname{HC}_\ast (C_\ast (\Omega X); k) \cong \hom(\operatorname{HC}_\ast (C^\ast (X;k)), k)
\]
where $\operatorname{HC}_\ast$ denotes the Hochschild chains. 

\end{thm}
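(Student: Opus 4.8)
The plan is to run the machine behind Theorem~\ref{main_thm} one more time, but over the ground ring $R=Hk$, and then to read off homology groups. First I would replace topology by algebra: by Shipley's theorem the $\infty$-category $\operatorname{LMod}_{Hk}$ is equivalent, \emph{as a symmetric monoidal $\infty$-category}, to the derived $\infty$-category $\mc{D}(k)$, so $\mathbf{E}_1$-$Hk$-algebras are the same as $k$-DGAs. Under this dictionary the Pontryagin chain algebra $C_\ast(\Omega X)$ corresponds to the connective $Hk$-algebra $\Sigma^\infty_+\Omega X \wedge Hk$, and the cochain algebra $C^\ast(X;k)$ to the \emph{coconnective} $Hk$-algebra $F(\Sigma^\infty_+X,Hk)$. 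I would then invoke the classical Eilenberg--Moore/Adams--Hilton theory (see \cite{DGI}, \cite{BlumbergMandellKoszul}): for $X$ simply connected these two $Hk$-algebras are Koszul dual in the sense of the paper, with $C^\ast(X;k)\simeq\operatorname{REnd}_{C_\ast(\Omega X)}(Hk,Hk)$ and $C_\ast(\Omega X)\simeq\operatorname{REnd}_{C^\ast(X;k)}(Hk,Hk)$, the required pairing $C^\ast(X;k)\wedge_{Hk}C_\ast(\Omega X)\to Hk$ coming from space-level maps. Two potential ``op'' subtleties evaporate: $C^\ast(X;k)$ is $\mathbf{E}_\infty$, so $C^\ast(X;k)^{\mathrm{op}}\simeq C^\ast(X;k)$, and loop inversion on $\Omega X$ gives $C_\ast(\Omega X)^{\mathrm{op}}\simeq C_\ast(\Omega X)$ --- exactly the cancellation already seen in the corollary $D(\thh(DX))\simeq\thh(\Sigma^\infty_+\Omega X)$.

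Next I would apply Theorem~\ref{thm_over_R} with $R=Hk$, $A=C^\ast(X;k)$ and $B=C_\ast(\Omega X)$. Its finiteness hypothesis --- that $A$ be finitely built from $Hk$ --- is precisely the requirement that $C^\ast(X;k)$ be a perfect $k$-complex, i.e.\ that $H^\ast(X;k)$ be finite dimensional; this is the finiteness hidden in the Jones--McCleary statement, so I would take $X$ to be a finite simply connected CW complex. Theorem~\ref{thm_over_R} then produces a natural equivalence of $Hk$-modules
\[
D_{Hk}\bigl(\thh_{Hk}(C^\ast(X;k))\bigr)\;\simeq\;\thh_{Hk}(C_\ast(\Omega X)),
\]
which, via Eilenberg--Moore for the homotopy pullback $\mc{L}X\simeq X\times_{X\times X}X$, one may also write as $D_{Hk}(C^\ast(\mc{L}X;k))\simeq C_\ast(\mc{L}X;k)$.

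Now I would pass to homology. Since $D_{Hk}=\operatorname{RHom}_{Hk}(-,Hk)$ is, under Shipley's equivalence, ordinary $k$-linear duality on $\mc{D}(k)$, and since $k$ is a field --- so there are no $\operatorname{Ext}^1$ or $\lim^1$ corrections, using that $H^\ast(\mc{L}X;k)$ is finite dimensional in each degree for $X$ finite simply connected --- applying $\pi_\ast$ to the displayed equivalence yields
\[
\operatorname{HH}_n(C_\ast(\Omega X);k)\;\cong\;\hom_k\bigl(\operatorname{HH}_{-n}(C^\ast(X;k)),k\bigr).
\]
Up to the grading reversal that any duality forces --- which Jones--McCleary's conventions absorb by regrading the Hochschild complex of the cochain algebra by total cohomological degree --- this is the asserted formula, reading $\operatorname{HC}_\ast$ as the homology of the Hochschild complex.

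The step I expect to be the genuine obstacle is promoting this to Connes' \emph{cyclic} homology, should that rather than Hochschild homology be intended. This requires upgrading the equivalence of Theorem~\ref{thm_over_R} to an equivalence of genuine $S^1$-spectra --- plausible by naturality, as the cyclic $S^1$-action on $\thh$ is functorial in the spectral category (Theorem~\ref{thh_sym_mon_infcat}) and $D_{Hk}$ of an $S^1$-object carries its own action --- and then taking $S^1$-homotopy orbits. But $D_{Hk}$, being a contravariant symmetric monoidal equivalence on perfect complexes, interchanges the colimit $(-)_{hS^1}$ with the limit $(-)^{hS^1}$, so this route produces \emph{a priori} $\operatorname{HC}_\ast(C_\ast(\Omega X);k)\cong\hom_k(\operatorname{HC}^-_{-\ast}(C^\ast(X;k)),k)$ with negative cyclic homology on the right. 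Reconciling this with the literal formula --- either by reinterpreting the right-hand $\operatorname{HC}$ in the homotopy-fixed-point convention, or by showing that for $X$ simply connected of finite type the bounded-below, finite-type $S^1$-spectra at issue force $\operatorname{HC}$ and $\operatorname{HC}^-$ to be $k$-linearly dual up to a shift --- is the delicate point, and it overlaps with the open question about cyclic structure raised in the introduction. Everything upstream of that is formal once Theorem~\ref{thm_over_R} is granted.
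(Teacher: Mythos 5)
Your proof is essentially the paper's own: translate via Shipley's equivalence between $k$-DGAs and $Hk$-algebras, identify $C_\ast(\Omega X;k)$ and $C^\ast(X;k)$ as Koszul dual over $Hk$ (the paper cites Adams' cobar-construction computation where you cite Eilenberg--Moore/Adams--Hilton), and then apply Theorem~\ref{thm_over_R} with $R=Hk$, with the passage to homotopy groups and the finiteness/grading bookkeeping left implicit in the paper's terser write-up. Your closing paragraph about Connes' cyclic homology is moot, since the paper's $\operatorname{HC}_\ast$ explicitly denotes the Hochschild chains, so your first two paragraphs already establish the stated result.
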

\begin{proof}
By \cite{Shipley}, $k$-DGAs are equivalent to $Hk$-module spectra. The $Hk$-module spectrum that corresponds to $C_\ast (\Omega X; k)$ is $\Sigma^\infty_+ \Omega X \sma Hk$ and the $Hk$-module spectrum that corresponds to $C^\ast (X; k)$ is $DX \sma Hk$. By computations of Adams \cite{Adams}, $C_\ast (\Omega X ;k)$ and $C^\ast (X;k)$ are Koszul dual. Thus, $\Sigma^\infty_+ \Omega X \sma Hk$ is Koszul dual over $Hk$ to $DX \sma Hk$. By \ref{thm_over_R} with $R = Hk$, $A = \Sigma^\infty_+ \Omega X \sma Hk$ and $B = DX \sma Hk$ we are now done. 
\end{proof}

As in \cite{jones_mccleary}, this provides an easy computation of $\hh_\ast (C_\ast (\Omega X; k))$

\begin{cor}
Let $X$ be a simply connected space, $k$ a field, and $\mc{L} X = \operatorname{Map}(S^1, X)$ the free loop space. Then we have the isomorphism
\[
\hh_\ast (C_\ast (\Omega X; k)) \cong H_\ast (\mc{L} X; k)
\]
\end{cor}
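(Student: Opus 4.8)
The plan is to mirror the proof of the Jones--McCleary theorem above, working with the underlying $\thh$-spectra rather than with cyclic invariants. Write $R = Hk$. As in that proof, under Shipley's equivalence \cite{Shipley} the $R$-algebra associated to the DGA $C_\ast(\Omega X; k)$ is $A := \Sigma^\infty_+ \Omega X \sma Hk$, the $R$-algebra associated to $C^\ast(X; k)$ is $B := DX \sma Hk$, and $A$ and $B$ are Koszul dual over $R$ (Adams \cite{Adams}); moreover $\hh_\ast(C_\ast(\Omega X; k)) \cong \pi_\ast \thh_R(A)$. So it is enough to identify $\pi_\ast \thh_{Hk}(\Sigma^\infty_+ \Omega X \sma Hk)$ with $H_\ast(\mc{L} X; k)$.

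First I would apply Theorem~\ref{thm_over_R} to the Koszul dual pair $(A, B)$ to obtain $\thh_R(A) \simeq D_R(\thh_R(B))$, which reduces the problem to computing the dual side $\thh_{Hk}(DX \sma Hk)$. Now $DX \sma Hk \simeq F(X_+, Hk)$ is a commutative $Hk$-algebra --- the cochains on $X$ with $k$-coefficients --- so its $\thh_{Hk}$ is the tensoring $F(X_+, Hk) \otimes S^1$ of this algebra with the circle. Writing $S^1$ as the pushout $\ast \sqcup_{\ast \sqcup \ast} \ast$ of spaces, this tensoring becomes a relative smash product over $F((X \times X)_+, Hk) \simeq F(X_+, Hk) \sma_{Hk} F(X_+, Hk)$ (the equivalence by K\"unneth over the field $k$) along the two diagonal-restriction maps, whose homotopy is computed by the Eilenberg--Moore spectral sequence of the diagonal $X \to X \times X$. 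Since $X$ is simply connected that spectral sequence converges and yields $\thh_{Hk}(DX \sma Hk) \simeq F((\mc{L} X)_+, Hk) \simeq D_{Hk}(\Sigma^\infty_+ \mc{L} X \sma Hk)$.

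Combining these gives $\thh_{Hk}(A) \simeq D_{Hk} D_{Hk}(\Sigma^\infty_+ \mc{L} X \sma Hk)$. Because $X$ is simply connected of finite type, $\mc{L} X$ is connected and of finite type (use the fibration $\Omega X \to \mc{L} X \to X$ and the Serre spectral sequence), so $\Sigma^\infty_+ \mc{L} X \sma Hk$ is a connective $Hk$-module with degreewise finite-dimensional homotopy; for such modules the biduality map $M \to D_{Hk} D_{Hk} M$ is an equivalence, hence $\thh_{Hk}(A) \simeq \Sigma^\infty_+ \mc{L} X \sma Hk$. Taking $\pi_\ast$ and invoking Shipley's identification once more gives $\hh_\ast(C_\ast(\Omega X; k)) \cong \pi_\ast(\Sigma^\infty_+ \mc{L} X \sma Hk) = H_\ast(\mc{L} X; k)$, as claimed.

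I expect the middle step to be the main obstacle: identifying the dual side $\thh_{Hk}(DX \sma Hk)$ with the cochains on the free loop space, equivalently the classical isomorphism $\hh_\ast(C^\ast(X; k)) \cong H^{-\ast}(\mc{L} X; k)$. This genuinely uses simple connectivity (for convergence of Eilenberg--Moore) and the field hypothesis (for K\"unneth); the remaining steps are formal. Alternatively one can avoid the dual side altogether: the corollary above for $X$ a simply connected finite CW complex (together with the observation there that it reproves the B\"okstedt--Waldhausen equivalence $\thh(\Sigma^\infty_+ \Omega X) \simeq \Sigma^\infty_+ \mc{L} X$), combined with base change along $S \to Hk$ in the form $\thh_{Hk}(M \sma Hk) \simeq \thh_S(M) \sma Hk$, gives $\thh_{Hk}(A) \simeq \Sigma^\infty_+ \mc{L} X \sma Hk$ directly; the two routes agree, and which is cleanest is a matter of taste.
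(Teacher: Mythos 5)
Your argument is essentially the paper's own: the corollary is deduced from the $\thh_R$ duality of Theorem \ref{thm_over_R} applied to the Koszul-dual pair $C_\ast(\Omega X;k)$ and $C^\ast(X;k)$, together with the classical identification of $\thh_{Hk}(DX \sma Hk)$ with cochains on $\mc{L}X$ (Eilenberg--Moore for the diagonal), followed by dualization over $k$ --- you are simply spelling out the dual-side computation that the paper delegates to Jones--McCleary. The one caveat, which you flag yourself, is that your K\"unneth and biduality steps (and the compactness implicitly needed to invoke the duality theorem) require $H^\ast(X;k)$ to be degreewise finite, a finite-type hypothesis not visible in the corollary's statement but equally implicit in the paper's reading.
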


There should be many other examples of Koszul duality over ring spectra other than $S$. Such examples might provide interesting computational results, given Thm.  \ref{thm_over_R}.

Another question we may ask is the following: Given that we have asked questions about $\thh$ and $K$-theory and Koszul duality, we could also wonder how Koszul duality behaves for topological cyclic homology, $\tc$. The author does not know the answer to this question at present --- a complication is that we would have to content with a cyclotomic structure and a kind of ``cocyclotomic'' structure.

\section{Relationship with Field Theories and Unbridled Speculation}

\subsection{Field Theories}

Here we briefly outline how the above relates to, and was motivated by field theories. Like Koszul duality, ``field theory'' means many different things to different people, but here  we will mean a field theory in the sense of Lurie \cite{LurieFT}, i.e. a symmetric monoidal functor of $(\infty,n)$-categories $\mbf{Cob}^\amalg \to \mc{C}^{\otimes}$. 

We first recall the definition, due to Atiyah, of a topological field theory. We must first define the domain of a topological field theory. 

\begin{defn}
The \textbf{cobordism category} of $n$-dimensional manifolds is a category whose objects are oriented $n$-manifolds and whose morphisms are cobordisms between $n$-manifolds. The category is symmetric monoidal with product given by disjoint union. Let $\mbf{Cob}^{\amalg}$ denote this category --- the decoration reminds us of the symmetric monoidal structure. 
\end{defn}

We can now define a topological field theory:

\begin{defn}
Let $\mc{C}^{\otimes}$ be a symmetric monoidal category. A \textbf{topological field theory} is a symmetric monoidal functor $F: \mbf{Cob}^{\amalg} \to \mc{C}^{\otimes}$. 
\end{defn}

\begin{rmk}
A topological field theory is thus a functor that assigns to each manifold some algebraic invariant that must in some way respect cobordism and symmetric monoidal structure. 
\end{rmk}

Before we move on it, it would be good to collect some motivating examples. 

\begin{example}
We consider the example a one-dimensional field theory, i.e. a functor $F$ from the cobordism category, valued in vector spaces. This is a functor that assigns to a (positively oriented) point, some vector space $F(+) = V$ (with ground field, say, $k$). It assigns to the oppositely oriented point the dual of the vector space $F(-) = V^{\vee}$. In the bordism category, the circle may be viewed as a morphism between the empty set and the empty set which is a composition of $\emptyset \to + \amalg -$ and $+ \amalg - \to \emptyset$.  The invariant assigned to a circle is thus the composition of the counit and the unit $F(\emptyset) = k \to V\otimes V^{\vee} \to k$. This is a version of the trace; see \cite{LurieFT} for more details. 
\end{example}

\begin{example}
A slightly less well known example is that of the category of bimodules. Let $\mbf{BiMod}$ be the category where the objects are rings and the morphisms are bimodules. The symmetric monoidal structure is given by relative tensor product. That is, if we have ``morphisms'' $ _{A} M_{B}$ and $ _{B} N_{C}$, then their composition is $_A M \otimes_B N_C$. In this case, the invariant associated to a line from a point to a point is the bimodule $_A A_A$. The invariant associated to the bordism from the empty set to two points is the bimodule $A_{A \otimes A^{\text{op}}}$. Similarly, the invariant associated to the bordism from two points to the empty set is $_{A\otimes A^{\text{op}}} A$. Thus, to a circle, we associate $A\otimes_{A\otimes A^{\text{op}}} A$. 
\end{example}

The above example looks like Hochschild homology, but of course it is not: the tensor product is not derived. This can be corrected by having field theories that are functors from homotopical categories to homotopical categories. 

In \cite{baez_dolan} Baez and Dolan propose a higher categorical variant of this definition called ``extended topological field theories.'' It would take us too far afield to fully describe this, but both Baez-Dolan and Lurie \cite{LurieFT} provide excellent expositions. Motivated by this, Hopkins and Lurie \cite{LurieFT} provide a classification of these extended field theories in the setting of $(\infty, n)$-categories. These $(\infty, n)$-categories are the weak or homotopical versions of $n$-categories: this is in analogy to how any model fo $(\infty,1)$-catgories is the weak or homotopical version of ordinary categories. For our purposes, we will only need $(\infty,1)$-categories; this is exactly the setting where the example above will yield honest Hochschild homology.  In this one-dimensional case, the Hopkins-Lurie classification roughly says that given a symmetric monoidal functor between $(\infty,1)$-categories, $F: \mbf{Cob}^{\amalg} \to \mc{C}^{\otimes}$, such a functor is completely determined by its value at a point (much as in the one-dimensional case of vector spaces above). This allows for the following example:

\begin{example}
Let $\mbf{BiMod}$ be the $\infty$-category of bimodules (this can be bimodules in spectra, DGAs, or any other symmetric monoidal $\infty$-category). Then by the Hopkins-Lurie classification, a topological field theory $F: \mbf{Cob}^{\amalg} \to \mc{C}^{\otimes}$ is completely determined by its value at a point, $F(+) = A$. In this case, $F(-) = A^{\text{op}}$ and $F(S^1) = A\otimes^{\mbf{L}}_{A\otimes A^{\text{op}}} A$. Thus, in the case when have as our target category bimodules in spectra, $F(S^1) = \thh(A)$, where $A$ is the value taken by the field theory at a point. If we take bimodules in DGAs, $F(S^1) = \operatorname{HC}_\ast (A)$ where $\operatorname{HC}$ denotes the Hochschild chains. 
\end{example}

We see that topological Hochschild homology has a fascinating relationship with field theories: it is in some sense easiest example of a manifold invariant that comes from field theories. This provides a way to interpret the main result of this paper in terms of field theories. 

\begin{example}
Given a one-dimensional topological field theory $F: \mbf{Cob}^{\amalg} \to \mbf{BiMod}$, we demonstrated above that if $F(+) = A$, then $F(S^1) = \thh (A)$. We may then ask the following question. Is there a field theory whose value on a circle is dual to $\thh(A)$? If $A$ has a derived Koszul dual, $B$, then our main theorem answers that question in the affirmative. The field theory $\widetilde{F}$ such that $\widetilde{F}(+) = B^{\text{op}}$ will give a field theory whose value on a circle is dual to that of $F$. 
\end{example}

Thus, \ref{main_thm} provides a kind of duality for low-dimensional field theories. 

The rest of this section will be devoted to conjecturing an extended version of this result. 

We pause here to note the existence of an invariant called topological chiral homology \cite{LurieHA, AyalaFrancis, andrade}. This is a theory which takes as input an $\mbf{E}_n$-algebra in an $(\infty,1)$-categorry $\mc{C}$ and an $n$-manifold, $M$ and outputs an element of $\mc{C}$ which is denoted by $\int_M A$. It also turns out that when $A$ is an $\mbf{E}_1$-algebra and $M = S^1$ that 
\[
\thh(A) = \int_{S^1} A. 
\]
Given that, let us write the theorem proved above in more suggestive notation:
\[
\int_{S^1} A \simeq D \left( \int_{S^1} \mf{D}A \right).
\]

Furthermore, in an appropriate target category, topological chiral homology is a topological field theory \cite{LurieFT}. 

Ralph Cohen has made the following conjecture

\begin{conj}\label{chiral_homology_conjecture}
Let $A$ be an $\mbf{E}_n$-algebra spectrum and let $M$ be an $n$-manifold. Then
\[
\int_M A \simeq D\left(\int_M \mf{D}A\right)
\]
where $\mf{D}A$ denotes the Koszul dual. 
\end{conj}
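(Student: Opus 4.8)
The plan is to carry out the proof of Theorem~\ref{main_thm} one categorical level higher, with $\thh$ replaced by a categorified topological chiral homology and $\operatorname{LMod}^{\operatorname{perf}}_A$ replaced by the appropriate $\infty$-category of $\mbf{E}_n$-modules. First I would construct a symmetric monoidal ``categorified'' factorization homology functor $\int_M^{\operatorname{cat}}(-) : \operatorname{Alg}_{\mbf{E}_n}(\cat^{\operatorname{perf}}_\infty) \to \cat^{\operatorname{perf}}_\infty$, obtained as the $\otimes$-excisive (Boardman--Vogt) left Kan extension along $\operatorname{Disk}_n \hookrightarrow \operatorname{Mfld}_n$; here one uses that $\cat^{\operatorname{perf}}_\infty$ (or, if one is forced to enlarge, $\mc{P}\text{r}^L_{\text{St}}$) admits the relevant sifted colimits and that its monoidal structure distributes over them, and one gets symmetric monoidality in $M$ and in the algebra argument from the standard formal properties of factorization homology (this is the $\mbf{E}_n$, categorified analogue of Theorem~\ref{thh_sym_mon_infcat}). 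Next I would establish a categorified Morita statement $\int_M A \simeq \int_M^{\operatorname{cat}}(\mc{M}_A)$, where $\mc{M}_A$ is the module-category invariant of $A$ --- $\operatorname{LMod}^{\operatorname{perf}}_A$ when $n=1$, and for general $n$ its $\mbf{E}_n$-Morita-theoretic enhancement, so that $\int_M^{\operatorname{cat}}$ of it reduces to $A$ on disks --- generalizing Lemma~\ref{morita_equiv}; this should follow by comparing two $\otimes$-excisive functors of $M$ that agree on $\operatorname{Disk}_n$.

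The genuinely new input is the categorical statement that Koszul dual $\mbf{E}_n$-algebras have dual module categories, generalizing Proposition~\ref{koszul_duality_categories}. As there, I would first compute $D(\operatorname{LMod}^{\operatorname{perf}}_A) \simeq \operatorname{LMod}^{S\text{-comp}}_A \simeq \mc{T}_A(S)$: the first equivalence is exactly the computation of the dual in Example~\ref{dual_cat}, which is insensitive to the $\mbf{E}_n$-structure, and the second is Lemma~\ref{thick_and_S_comp}, whose proof (via Proposition~\ref{thick_equivalence}) used only that $A$ is small as an $S$-module. The new point is to identify $\mc{T}_A(S)$ with $\operatorname{LMod}^{\operatorname{perf}}_{B^{\operatorname{op}}}$ for $B = \mf{D}A$ the $\mbf{E}_n$-Koszul dual: I would try to obtain this from the $\mbf{E}_n$ bar--cobar and double-centralizer theory (\cite{LurieDAGX}, and the $\mbf{E}_n$-factorization-homological incarnation of Ayala--Francis \cite{AyalaFrancis}), together with the Schwede--Shipley recognition theorem \cite{schwede_shipley, LurieHA} identifying the localizing subcategory generated by $S$ inside $\operatorname{LMod}_A$ with modules over $\operatorname{REnd}_A(S,S)$, which is $B$ by hypothesis.

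Granting these two ingredients, the conclusion is formal, just as in the proof of Theorem~\ref{main_thm}: duality data (unit and counit satisfying the triangle identities of Definition~\ref{dualizable}) is preserved by any symmetric monoidal functor, so applying $\int_M^{\operatorname{cat}}$ to $D(\operatorname{LMod}^{\operatorname{perf}}_A) \simeq \operatorname{LMod}^{\operatorname{perf}}_{B^{\operatorname{op}}}$ and then the categorified Morita equivalence yields $\int_M A$ as the Spanier--Whitehead dual of $\int_M B$, i.e. $\int_M A \simeq D(\int_M \mf{D}A)$. As with the $\mbf{E}_1$ case, one should expect this equivalence to be one-sided, so no symmetric statement $\int_M \mf{D}A \simeq D(\int_M A)$ holds without further compactness. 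One caveat to handle carefully is equivariance: for $\int_M$ to be defined on an \emph{oriented} (rather than framed) $M$ one needs $A$ to be a framed $\mbf{E}_n$-algebra, i.e. to carry a compatible $\operatorname{SO}(n)$-action, and one must check that $\mf{D}A$ inherits such an action so that $\int_M \mf{D}A$ even makes sense --- I expect this to follow from functoriality of Koszul duality, but it needs verification.

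I expect the second step to be the main obstacle. The coherence problems that forced the author away from the bar--cobar approach already for $\mbf{E}_1$ are considerably worse here: there is no strictly (co)associative model, one must work with the full $\mbf{E}_n$-operad and exploit its Koszul self-duality, and the finiteness hypotheses have to be pinned down precisely enough that the $\mbf{E}_n$ double-centralizer map is an equivalence and the localizing subcategory generated by $S$ is again recognizably a module category over the $\mbf{E}_n$-Koszul dual. A secondary difficulty is simply constructing $\int_M^{\operatorname{cat}}$ within $\cat^{\operatorname{perf}}_\infty$ itself (as opposed to a presentable enlargement) while keeping the symmetric monoidal structure, since the relevant colimits and idempotent-completions must be controlled; working in $\mc{P}\text{r}^L_{\text{St}}$ and then descending to compact objects, as in Lemma~\ref{thick_and_S_comp}, is the natural way around this.
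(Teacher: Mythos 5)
Be careful about the status of this statement: in the paper it is Conjecture~\ref{chiral_homology_conjecture}, attributed to Ralph Cohen, and the paper offers no proof of it. The author explicitly remarks that extra compactness hypotheses are almost certainly needed and that he does not even know what form they should take; the paper only proves the one-dimensional case, Theorem~\ref{main_thm}, under the hypothesis that $A$ is finitely built from $S$. So there is no proof in the paper to compare yours against, and your text is a strategy outline rather than a proof. Its formal skeleton (symmetric monoidal functors preserve duality data in the sense of Definition~\ref{dualizable}, so everything reduces to a duality statement about ``module'' objects plus a Morita-type identification) is indeed the same skeleton as the proof of Theorem~\ref{main_thm}, but the two ingredients you defer are precisely the open content of the conjecture, not routine generalizations.

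Concretely: (i) for $n\ge 2$ the object playing the role of $\operatorname{LMod}^{\operatorname{perf}}_A$ is not a plain stable $\infty$-category but an object of an $(\infty,n)$-categorical Morita theory, and the claim that a categorified $\int_M^{\operatorname{cat}}$ applied to it recovers $\int_M A$ is not a formal comparison of $\otimes$-excisive functors agreeing on disks --- constructing that invariant and proving such a Morita statement is itself a substantial problem, not an analogue of Lemma~\ref{morita_equiv} one gets for free. (ii) The step you call ``the genuinely new input,'' identifying the relevant dual of the higher module object with the corresponding object for the $\mbf{E}_n$-Koszul dual, is exactly where the $\mbf{E}_1$ proof used Proposition~\ref{koszul_duality_categories}, whose proof leaned on the double-centralizer condition and the Schwede--Shipley recognition of $\mc{L}_A(S)$ as modules over $\operatorname{End}_A(S,S)$; no $\mbf{E}_n$ analogue of this is available in the paper, and invoking ``bar--cobar and double-centralizer theory'' does not supply it. Finally, the conjecture as you (and the paper) state it carries no finiteness hypothesis, whereas the $\mbf{E}_1$ theorem genuinely needs $A$ perfect over $S$ and the duality is one-sided; without pinning down the analogous hypotheses the statement is expected to fail, as the paper itself warns. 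So your proposal is a reasonable research plan, but it does not close the gap between the proved $S^1$ case and the general conjecture.
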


\begin{rmk}
In fact, this paper grew out of trying to understand this conjecture for low-dimensional cases. 
\end{rmk}

\begin{rmk}
Although something like the above is no doubt true, there are most likely subtle compactness issues that will demand extra hypotheses, as in the case of $\thh$. At this time, however, the author has no idea what kind of compactness conditions would arise in this case. 
\end{rmk}

The above would be an extremely exciting conjecture to prove, as it would provide a full-fledged duality in field theories. 

However, we can make the conjecture slightly more general. Note that $\mbf{E}_0$ Koszul duality is simply what is usually called duality and also, that given an $\mbf{E}_n$-algebra $A$, we can consider it as an $\mbf{E}_k$ algebra for any $k < n$. Then Conjecture~\ref{chiral_homology_conjecture} is saying that the chiral homology of $A$ is equivalent to the $\mbf{E}_0$-Koszul dual of the chiral homology of the $\mbf{E}_n$ Koszul dual of $A$. We also note that if $n < k$ and $A$ is an $\mbf{E}_k$ algebra, then $\int_M A$ is an $\mbf{E}_{k-n}$ algebra. 

The above conjecture can then be refined:

\begin{conj}
Let $A$ be an $\mbf{E}_{n+\ell}$-algebra and $M$ an $n$-manifold. Then
\[
\int_M A \simeq \mf{D}_{\mbf{E}_{\ell}} \left(\int_M \mf{D}_{\mbf{E}_{n+\ell}} A\right)
\]
\end{conj}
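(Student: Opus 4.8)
The plan is to run the argument of Theorem~\ref{main_thm} one categorical level up: replace ``$\thh$ is symmetric monoidal on module categories'' by ``$\int_M$ is symmetric monoidal on a suitable $\mbf{E}_\ell$-monoidal category of module categories,'' and replace the duality $\operatorname{LMod}^{\operatorname{perf}}_A \dashv \operatorname{LMod}^{\operatorname{perf}}_{B^{\operatorname{op}}}$ of Proposition~\ref{koszul_duality_categories} by an $\mbf{E}_{n+\ell}$-version of the same. So the steps are: (i) produce the $\mbf{E}_\ell$-structures that make both sides of the asserted equivalence $\mbf{E}_\ell$-rings; (ii) exhibit the module-category data of $A$ and $\mf{D}_{\mbf{E}_{n+\ell}}A$ as dual objects in a symmetric monoidal $\infty$-category that is itself $\mbf{E}_\ell$-monoidally enriched; (iii) apply $\int_M$ and use its symmetric monoidality to upgrade the resulting duality of spectra to an $\mbf{E}_\ell$-Koszul duality, just as Lemma~\ref{thh_left_dual} deduced Theorem~\ref{main_thm} from symmetric monoidality of $\thh$.

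For (i) I would use Dunn--Lurie additivity $\mbf{E}_{n+\ell}\simeq\mbf{E}_n\otimes\mbf{E}_\ell$ \cite{LurieHA} to regard $A$ as an $\mbf{E}_\ell$-algebra object in $\operatorname{Alg}_{\mbf{E}_n}(\mc{C})$, where $\mc{C}$ is the ambient category of spectra. Factorization homology over a framed (or appropriately structured) $n$-manifold $M$ is a symmetric monoidal functor $\int_M:\operatorname{Alg}_{\mbf{E}_n}(\mc{C})\to\mc{C}$ \cite{AyalaFrancis,LurieHA}, hence induces $\operatorname{Alg}_{\mbf{E}_\ell}(\operatorname{Alg}_{\mbf{E}_n}(\mc{C}))\to\operatorname{Alg}_{\mbf{E}_\ell}(\mc{C})$. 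Applied to $A$, and to $\mf{D}_{\mbf{E}_{n+\ell}}A$ (which is again $\mbf{E}_\ell$ in $\operatorname{Alg}_{\mbf{E}_n}$ since $\mbf{E}_{n+\ell}$-Koszul duality may be performed in the $\mbf{E}_n$-direction and is $\mbf{E}_\ell$-monoidal), this equips $\int_M A$ and $\int_M\mf{D}_{\mbf{E}_{n+\ell}}A$ with $\mbf{E}_\ell$-ring structures, so $\mf{D}_{\mbf{E}_\ell}$ of the latter makes sense. The case $n=1$, $M=S^1$, $\ell=0$ recovers Theorem~\ref{main_thm} with $\mf{D}_{\mbf{E}_0}=D$, and the case $\ell=0$ with general $M$ recovers the unrefined Conjecture~\ref{chiral_homology_conjecture}; these are the sanity checks I would want to see first.

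For (ii) and (iii) I would first prove the higher analogue of Morita invariance (Lemma~\ref{morita_equiv}): that $\int_M A$ is computed from a module category $\operatorname{LMod}^{\operatorname{perf}}_A$ --- now an $\mbf{E}_\ell$-algebra object of $\cat^{\operatorname{perf}}_\infty$, using additivity and the symmetric monoidal structure of Theorem~\ref{morita_thm} --- via an iterated/twisted bar construction generalizing $\thh=\int_{S^1}$. Then the $\mbf{E}_{n+\ell}$-Koszul-duality statement at the module-category level, generalizing Lemma~\ref{koszul} and Proposition~\ref{koszul_duality_categories}, would assert that under the necessary finiteness hypothesis the $\mbf{E}_\ell$-objects $\operatorname{LMod}^{\operatorname{perf}}_A$ and $\operatorname{LMod}^{\operatorname{perf}}_{(\mf{D}_{\mbf{E}_{n+\ell}}A)^{\operatorname{op}}}$ of $\cat^{\operatorname{perf}}_\infty$ are dual, with evaluation and coevaluation realized by $\mbf{E}_\ell$-monoidal functors (the $\mbf{E}_\ell$-enriched refinement of Definition~\ref{dualizable} and Example~\ref{dual_cat}). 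Applying the symmetric monoidal $\int_M$ carries this unit/counit to a unit/counit in $\mc{C}$; because these are $\mbf{E}_\ell$-algebra maps and the $\mbf{E}_{n+\ell}$-double-centralizer map is $\mbf{E}_\ell$-equivariantly an equivalence (the content of the finiteness hypothesis), the induced duality of spectra is exactly $\int_M A\simeq\mf{D}_{\mbf{E}_\ell}(\int_M\mf{D}_{\mbf{E}_{n+\ell}}A)$.

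The hard part is precisely what the author flags as open. First, the correct compactness hypothesis: the analogue of ``$A$ finitely built from $S$'' should read something like ``$A$ is built from the unit in finitely many $\mbf{E}_n$-cells,'' or ``$\operatorname{Bar}^{(n)}(A)$ is dualizable and the $\mbf{E}_{n+\ell}$-double-centralizer map is an equivalence,'' and one must check this condition is preserved by $\int_M$ and by passage through module categories. Second, the higher-Morita input --- that $\int_M$ factors through $\mbf{E}_\ell$-algebra objects of $\cat^{\operatorname{perf}}_\infty$ and is genuinely (not merely laxly) symmetric monoidal there --- has a cobordism-hypothesis flavor and is where I expect the real work to lie. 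Once these structural facts are established, the deduction is formal, exactly as in Section~4.
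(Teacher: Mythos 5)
The statement you are trying to prove is not proved in the paper at all: it is stated as a conjecture (a refinement of Conjecture~\ref{chiral_homology_conjecture}), and the author explicitly says that the correct compactness hypotheses are unknown and that he ``has no idea what kind of compactness conditions would arise in this case.'' So there is no paper proof to match, and your proposal should be judged as a purported proof of an open statement. Judged that way, it has a genuine gap: each of your three steps is itself an unestablished theorem, and together they constitute essentially the entire content of the conjecture rather than a reduction of it to known results. Concretely: (a) the higher Morita input --- that $\int_M$ of an $\mbf{E}_{n}$-algebra can be computed from an $\mbf{E}_\ell$-monoidal object of $\cat^{\operatorname{perf}}_\infty$ built from module categories, by a genuinely symmetric monoidal functor --- is nowhere available in the paper or its references; the paper's Morita invariance (Lemma~\ref{morita_equiv}) and monoidality (Theorem~\ref{thh_sym_mon_infcat}) are specific to $\thh=\int_{S^1}$, and for $n\geq 2$ the relevant ``module category'' is not $\operatorname{LMod}_A$ but an iterated/higher-categorical object whose existence and properties you are assuming. (b) The $\mbf{E}_{n+\ell}$-analogue of Proposition~\ref{koszul_duality_categories} is likewise assumed, not proved: already in the $\mbf{E}_1$ case the duality in $\cat^{\operatorname{perf}}_\infty$ is one-sided, the left dual is $\operatorname{LMod}^{S\text{-comp}}_A$, and its identification with $\operatorname{LMod}^{\operatorname{perf}}_{B^{\text{op}}}$ requires both the dc-completeness hypothesis and compactness of $A$ over $S$; you give no candidate statement, let alone argument, for what replaces $\mc{T}_A(S)$, the ``op,'' or the finiteness condition when the duality is taken in the $\mbf{E}_n$-direction. (c) Your claim that $\mf{D}_{\mbf{E}_{n+\ell}}A$ is again an $\mbf{E}_\ell$-algebra in $\operatorname{Alg}_{\mbf{E}_n}$ ``since Koszul duality may be performed in the $\mbf{E}_n$-direction and is $\mbf{E}_\ell$-monoidal'' is exactly the kind of compatibility of Koszul duality with Dunn additivity that needs proof; Koszul duality is at best lax monoidal in general, so the $\mbf{E}_\ell$-structure on the dual is not automatic.

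To be fair, you flag these points yourself as ``where the real work lies,'' and as a research outline your plan is reasonable and parallels how Theorem~\ref{main_thm} was deduced from Proposition~\ref{koszul_duality_categories}, Lemma~\ref{thh_left_dual}, and Lemma~\ref{morita_equiv}. But a proof proposal whose every load-bearing lemma is an open problem --- including the formulation of the finiteness hypothesis, which the paper identifies as the unresolved issue even in the unrefined Conjecture~\ref{chiral_homology_conjecture} --- is a program, not a proof. If you want to make progress, the first concrete target should be the case $n=1$, $\ell=1$, $M=S^1$ (i.e.\ an $\mbf{E}_2$-algebra and $\thh$ with its residual $\mbf{E}_1$-structure), where the module-category technology of the paper is closest to sufficient and where you could test what the double-centralizer and compactness hypotheses must become.
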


\newpage

\bibliographystyle{amsplain}	
\bibliography{kosbib}		
\end{document}